\theoremstyle{plain}
\newtheorem{theorem}{Theorem}[section]
\newtheorem{lemma}[theorem]{Lemma}
\newtheorem{corollary}[theorem]{Corollary}
\newtheorem{proposition}[theorem]{Proposition}
\newtheorem{remark}[theorem]{Remark}
\newtheorem{definition}[theorem]{Definition}
\numberwithin{equation}{section}
\newcommand{\EE}{\mathbb{E}}
\newcommand{\II}{\mathbb{I}}
\newcommand{\LL}{\mathbb{L}}
\newcommand{\NN}{\mathbb{N}}
\newcommand{\PP}{\mathbb{P}}
\newcommand{\RR}{\mathbb{R}}
\newcommand{\cC}{\mathcal{C}}
\newcommand{\cE}{\mathcal{E}}
\newcommand{\cF}{\mathcal{F}}
\newcommand{\cG}{\mathcal{G}}
\newcommand{\cI}{\mathcal{I}}
\newcommand{\cJ}{\mathcal{J}}
\newcommand{\cM}{\mathcal{M}}
\newcommand{\cN}{\mathcal{N}}
\newcommand{\cO}{\mathcal{O}}
\newcommand{\cP}{\mathcal{P}}
\newcommand{\cS}{\mathcal{S}}
\newcommand{\cMloc}{\mathcal{M}^{\operatorname{loc}}}
\newcommand{\cMKloc}{\mathcal{M}_{K}^{\operatorname{loc}}}
\newcommand{\epsLSE}[1][K]{\varepsilon_{#1,\mathrm{LS}}}
\newcommand{\epsSupwidth}[1][K]{\varepsilon_{#1,w}}
\newcommand{\epswidth}[1][\mu]{\varepsilon_{#1,w}}
\newcommand{\argmax}{\mathop{\mathrm{argmax}}}
\newcommand{\argmin}{\mathop{\mathrm{argmin}}}
\newcommand{\diam}{\mathrm{diam}}
\newcommand{\loc}{\operatorname{loc}}
\newcommand{\sign}{\mathop{\mathrm{sign}}}
\def\T{{ ^\mathrm{\scriptscriptstyle T} }}
\begin{document}

\begin{center}
{\LARGE Some facts about the optimality of the LSE in the Gaussian sequence model with convex constraint}

{\large
\begin{center}
Akshay Prasadan and Matey Neykov
\end{center}}

{Department of Statistics \& Data Science, Carnegie Mellon University\\
Department of Statistics and Data Science, Northwestern University\\
[2ex]\texttt{aprasada@andrew.cmu.edu}, ~~~ \texttt{mneykov@northwestern.edu}}
\end{center}

\begin{abstract} 
We consider a convex constrained Gaussian sequence model and characterize necessary and sufficient conditions for the least squares estimator (LSE) to be minimax optimal. For a closed convex set $K\subset \RR^n$ we observe $Y=\mu+\xi$ for $\xi\sim \cN(0,\sigma^2\II_n)$ and $\mu\in K$ and aim to estimate $\mu$. We characterize the worst case risk of the LSE in multiple ways by analyzing the behavior of the local Gaussian width on $K$. We demonstrate that optimality is equivalent to a Lipschitz property of the local Gaussian width mapping. We also provide theoretical algorithms that search for the worst case risk. We then provide  examples showing optimality or suboptimality of the LSE on various sets, including $\ell_p$ balls for $p\in[1,2]$, pyramids, solids of revolution, and multivariate isotonic regression, among others.
\end{abstract}

\tableofcontents

\section{Introduction}
\label{section:LSE:introduction}

In this paper we focus on the Gaussian sequence model problem. Specifically, we observe a single observation $Y = \mu + \xi$, where $\xi \sim \cN(0, \sigma^2\II_n)$ is a multivariate Gaussian noise and $\mu \in K$ where $K \subset \RR^n$ is a known closed convex constraint. Our goal is to estimate the vector $\mu$ taking into account the convex constraint, with the hope that our estimator will be optimal and computationally tractable. A popular estimator in this setting is the Least Squares Estimator (LSE) which essentially projects the observation $Y$ onto the convex constraint. In detail, the LSE is given by
\begin{align}\label{LSE:def}
    \hat \mu= \hat \mu (Y) := \argmin_{\nu\in K}\|Y - \nu\|^2,
\end{align}
where we abbreviated the Euclidean norm by $\|\cdot\|$. By definition, we have that $\hat \mu$ is the Euclidean projection of $Y$ onto the set $K$ which is sometimes denoted by $\Pi_K Y$. The LSE is perhaps one of the most intuitive estimators for this problem (it is also the maximum likelihood estimator); in addition, the LSE can be solved for a variety of sets $K$ as Euclidean projection is a well studied convex problem and there exist plenty of methods which can calculate $\hat \mu$. Unfortunately, it is known that there exist sets $K$ for which the LSE is far from optimal in the worst case. Our goal in this paper is to give some insights into when is $\hat \mu$ an ``optimal'' estimator. Here we will measure optimality with respect to the expected squared $\ell_2$ loss. Taking a worst case perspective we would like to compare $\sup_{\mu \in K}\EE_\mu \|\hat \mu - \mu\|^2$ to the minimax optimal rate, i.e., up to constants the expression $\inf_{\hat \nu} \sup_{\mu \in K} \EE_\mu \|\hat \nu(Y) - \mu\|^2$, with the infimum taken with respect to all measurable functions (i.e., estimators) of the data. Our primary technique will be analyzing the local geometry of $K$ using a localized version of the Gaussian width as well as the metric entropy. We will give numerous necessary and sufficient conditions for LSE optimality, and then apply these results to a wide variety of examples and non-examples of LSE optimality.

\subsection{Related Literature}

Recently, the minimax rate in this problem was characterized in terms of the local geometry of the set $K$ \citep{neykov2022minimax}. On the other hand, \citet{chatterjee2014new} characterized the expression $\EE \|\hat \mu - \mu\|^2$ for any point $\mu \in K$. It may appear that the problem we posed above is nearly solved. The only thing one needs to do is to somehow add a $\sup_{\mu \in K}$ in front of \citet{chatterjee2014new}'s variational formula (see \eqref{equation:varepsilon:mu} below) and compare that to the minimax rate. However, this turns out to be more challenging than one may anticipate, as we hope to convince the reader in Section \ref{main:results:sec}. 

While \citet{chatterjee2014new} does not establish LSE optimality conditions in full generality, the work does demonstrate that the LSE satisfies an admissibility condition (see \citet[Theorem 1.4]{chatterjee2014new}, \citet{chen_guntuboyina_admissability}, \citet{kur2023admissability}). That is, up to some universal constant, for any arbitrary estimator $\tilde\mu$, there exists a $\mu\in K$ for which the LSE has a lower risk than the arbitrary estimator up to that universal constant. This means the LSE is preferable for some region of the parameter space. Our paper on the other hand focuses on the worst case risk of the LSE.

The optimality of the LSE is well known for a wide class of examples. For isotonic regression, \citet{zhang2002risk}, \citet{bellec2018sharp} demonstrate upper bounds on the LSE risk that match the minimax lower bound from \citet[Corollary 5]{bellec2015sharp}. In multivariate isotonic regression, \citet{deng_and_zhang_2020} propose a block estimator that outperforms the LSE in certain cases. \citet{wei2020gauss} establishes LSE optimality for ellipsoid estimation problems under regularity assumptions that hold for Sobolev ellipsoids with smoothness parameter $\alpha>1/2$. Suboptimal examples can be found in \citet{zhang2013nearly, chatterjee2014new}.


An example of our set-up is non-parametric regression with fixed design, i.e., \[K=\{(f(x_1), \ldots, f(x_n))\colon f\in\cF\},\] where the $x_i$ are fixed points and $\cF$ is a convex class of functions. \citet{kur_gil_rakhlin} give suboptimality results for the LSE in non-Donsker regimes, where the logarithm of the metric entropy has rate $\delta^{-\alpha}$ for $\alpha>2$. \citet{qiyang_han_2021_donsker} further investigates the non-Donsker setting, and under a stronger assumption on the entropy of the function class, aims to close the gap between the upper and lower bounds on the LSE risk that derives from an entropy integral condition \citep{birge1993rates}. \citet{kur2020convex} establishes suboptimality when the domain of functions in $\cF$ is a polytope of dimension at least 5, with applications to convex function classes with Lipschitz or boundedness constraints. \citet{kur2023admissability}, who also give a generalization of the admissibility result from \citet{chatterjee2014new}, demonstrates that suboptimality emerges from the bias portion of the risk following a bias-variance decomposition.

\subsection{Notation and Definitions}\label{main:section}

We define $[n]:=\{1,\dots,n\}$ for each $n\in\NN$. We write $a\lesssim b$ if for some absolute constant $C>0$ we have $a\le Cb$, and similarly define $\gtrsim$. We write $a\asymp b$ if $a\lesssim b$ and $b\lesssim a$ both hold, for possibly different constants. Unless otherwise specified, we operate on sets $K\subset \RR^n$ and use $\|\cdot\|_2$ to denote the Euclidean $\ell_2$-norm on $\RR^n$, dropping the subscript when clear. For any $\theta\in\RR^n$ and $\varepsilon>0$, we define $B(\theta,\varepsilon) =\{\theta'\in\RR^n:\|\theta-\theta'\|_2\le\varepsilon\}$. We define $\RR^+=(0,\infty)$. We write the $n\times n$ identity matrix as $\II_n$. We denote by $d$ the diameter of $K$, i.e., $d:= \mathrm{diam}(K) := \sup\{\|\theta-\theta'\|_2 :\theta,\theta'\in K\}$. We will define several quantities related to the Gaussian width and least squares estimator, usually denoted by adding subscripts to $\varepsilon$. For convenience, we summarize the notation in Table \ref{tab:1}.

\begin{definition}[Packing Sets and Global Entropy] An $\varepsilon$-packing of a (totally) bounded set $T \subset \RR^n$ with respect to $\|\cdot\|_2$ is a set $\{\theta_1,\ldots, \theta_M \} \subset T$ such that $\|\theta_i - \theta_j\|_2 > \varepsilon$ for all $i\ne j$. The $\varepsilon$-packing number $\cM(\varepsilon, T)$ is the largest possible cardinality of an $\varepsilon$-packing of $T$. The global entropy of $T$ at distance $\varepsilon$ is simply the number $\log \cM(\varepsilon, T)$.
\end{definition}

\begin{definition}[Local Entropy]\label{local:entropy:def} Let $c^* \in \RR^+$ be a sufficiently large absolute constant.\footnotemark{} Define 
\begin{align*}
    \cMKloc(\varepsilon) = \sup_{\theta \in K} \cM(\varepsilon/c^{\ast}, B(\theta, \varepsilon) \cap K),
\end{align*} i.e., the largest $(\varepsilon/c^{\ast})$-packing of a set of the form $B(\theta, \varepsilon) \cap K$.
 We refer to $\log \cMKloc(\varepsilon)$ as the local entropy of $K$. We drop the subscript $K$ from $\cMKloc$ if the context is clear.
\end{definition}

\begin{definition}[(Local) Gaussian Width]\label{gaussian:def} The Gaussian width of a set $T\subset\RR^n$ is defined by $w(T) = \EE\left[\sup_{t\in T}\langle x,t\rangle\right]$ where $x \sim \cN(0,\II_n)$. Let $\mu \in \RR^n$ and consider the set $B(\mu,\varepsilon)\cap K$. Then the local Gaussian width at $\mu$ is the function defined as $w_{K,\mu}(\varepsilon) = w(B(\mu,\varepsilon)\cap K)$. When the set $K$ is clear from the context, we will denote $w_{K,\mu}(\varepsilon)$ with $w_\mu(\varepsilon)$. 
\end{definition} \footnotetext{We also reserve $c^*$ to refer to this constant throughout this paper.}

\citet{neykov2022minimax} characterized up to absolute constant factors the minimax rate. It turns out to be $\inf_{\hat \nu} \sup_{\mu \in K} \EE_\mu \|\hat \nu(Y) - \mu\|_2^2 \asymp \varepsilon^{*2} \wedge d^2$, where
\begin{align}\label{varepsilon:star:def}
\varepsilon^*:=\sup \{\varepsilon: \varepsilon^2/\sigma^2 \leq \log \cMKloc(\varepsilon) \}.
\end{align}
In fact, we can show $\varepsilon^{\ast}\lesssim d$. To see this, take $\varepsilon \ge (c^{\ast}+\kappa)d$ for any $\kappa>0$. Then $\log \cMKloc(\varepsilon) = 0$, since for any $\mu\in K$, if we pack $K\cap B(\mu,(c^{\ast}+\kappa)d)=K$ using points of distance at least $\frac{c^{\ast}+\kappa}{c^{\ast}}d>d$ apart, there can only be one point in the packing. This implies by \eqref{varepsilon:star:def} that $\varepsilon^{\ast}\le\varepsilon$ for all $\varepsilon \ge (c^{\ast}+\kappa)d$, implying $\varepsilon^{\ast}\lesssim (c^{\ast}+\kappa)d$. Hence $\varepsilon^{\ast}\wedge d \asymp \varepsilon^{\ast}$.

We will now give a quick universal lower bound on the minimax rate which will be useful later on. We give a proof using the definition of $\varepsilon^{\ast}$ but it can also be deduced from a two point lower bound argument. In addition, we show a different lower bound on the rate  involving the Gaussian width of the set $K$ in the appendix (see Lemma \ref{generic:bound:minimax:rate}). 

 \begin{lemma}[Minimax Rate Bound]\label{minimax:bound:versus:sigma:and:d} The minimax rate $\varepsilon^{\ast}$ satisfies $\varepsilon^{\ast} \gtrsim \sigma\wedge d$.
 \end{lemma}

Another useful property is invariance of the minimax rate to the constants used inside or outside the local metric entropy term, as the following lemma shows. The proof relies on the fact that $\varepsilon\mapsto \cMloc_K(\varepsilon)$ is non-increasing for convex sets $K$ (or more generally, star-shaped ones), as established in \citet[Lemma II.8]{neykov2022minimax}. 

\begin{lemma}[Equivalent Forms of Information Theoretic Lower Bound] \label{lemma:equivalent:information:lower:bound} Define $\varepsilon^{\dag} = \sup\{\varepsilon>0:\varepsilon^{2}/\sigma^2 \leq C_1\log \cMKloc(C_2\varepsilon)\}$ for any fixed $C_1, C_2>0$. Then we have $\varepsilon^{\ast} \asymp \varepsilon^\dag$, where $\varepsilon^{
\ast}=\sup \{\varepsilon: \varepsilon^2/\sigma^2 \leq \log \cMKloc(\varepsilon)\}$.
\end{lemma}

\subsection{Organization}

In Section \ref{main:results:sec}, we begin by introducing a variational quantity from \citet{chatterjee2014new} related to the least squares error. We use the relationship between the two to then derive bounds on the worst case LSE error. We then characterize a sufficient condition for the LSE to be minimax optimal. Then we derive numerous other variational formulas that let us bound the worst case LSE rate.

Section \ref{section:LSE:examples} is split into two parts. First, we illustrate numerous examples where the LSE is minimax optimal, or nearly so. This includes isotonic regression in both univariate and multivariate settings, hyper-rectangles, subspaces, and $\ell_p$ balls for $p\in\{1,2\}$. We then show examples where the LSE is suboptimal, including pyramids, multivariate isotonic regression when the noise is too large, solids of revolution, ellipsoids, and $\ell_p$ balls for $p\in(1,2)$.

In Appendix \ref{section:algorithm_proofs}, we develop two theoretical algorithms that aim to find this worst case LSE rate provided the set $K$ is bounded, using some of the variational quantities we defined. The remaining appendices contain the proofs of our main results and examples.

\begin{table*}[t]
    \renewcommand{\arraystretch}{1.5}
      \caption{Summary of common notation.}
  \centering
  \begin{tabular}{m{3cm}|m{6cm}|m{4cm}}
  \hline
    Symbol & Definition & Meaning / Usage \\\hline 
    $\epsLSE(\sigma)^2$ & $\sup_{\mu \in K} \EE \|\hat \mu - \mu\|^2$ & Worst-case LSE Rate \\ \hline 
    $w_{\mu}(\varepsilon)=w_{K,\mu}(\varepsilon)$ & $w(B(\mu,\varepsilon)\cap K)$ & Local Gaussian Width \\ \hline
    $\epswidth(\sigma)$ & $\argmax_{\varepsilon} [\sigma w_{\mu}(\varepsilon) - \tfrac{\varepsilon^2}{2}]$ & Controls $\epsLSE$ \\\hline
    $\epsSupwidth =\epsSupwidth(\sigma)$ & $\sup_{\mu\in K}\epswidth(\sigma)$ & Controls $\epsLSE$ \\ \hline
    $\cMKloc(\varepsilon)$ &  $\sup_{\theta \in K} \cM(\varepsilon/c^{\ast}, B(\theta, \varepsilon) \cap K)$ & Local Metric Entropy \\\hline 
    $\varepsilon^{\ast}$ &  $\sup \{\varepsilon: \varepsilon^2/\sigma^2 \leq \log \cMKloc(\varepsilon) \}$ & $\sqrt{\text{Minimax Rate}}$ \\\hline 
  \end{tabular}
  \vspace*{5pt}
  \label{tab:1}
\end{table*}

\section{Main Results}\label{main:results:sec}


According to \citet[Theorem 1.1]{chatterjee2014new} the maximizer of 
\begin{align} \label{equation:varepsilon:mu}
    \epswidth(\sigma) := \argmax_{\varepsilon} [\sigma w_{\mu}(\varepsilon) - \tfrac{\varepsilon^2}{2}]
\end{align}
is very related (i.e., in some sense it controls) the risk $\EE \|\hat \mu - \mu\|^2$ where $\hat \mu = \hat \mu(Y)$ is the LSE defined in \eqref{LSE:def}.
This fact can also be seen in the following lemma, which follows a similar logic to  \citet[Corollary 1.2]{chatterjee2014new}. For completeness, we give the full proof in the appendix.

\begin{lemma}\label{lemma:chatterjee:analogue} If $\epswidth(\sigma) \geq C \sigma$, then $\EE \|\hat \mu - \mu\|^2 \asymp \epswidth^2$. On the other hand, if $\epswidth(\sigma) < C \sigma$, then $\EE \|\hat \mu - \mu\|^2 \lesssim \sigma^2$. Here $\gtrsim, \asymp$ and $\lesssim$ hide universal constants and $C > 0$ is another universal constant (some values of the constants are calculated in the proof of the lemma).
\end{lemma}

 We now define $\epsSupwidth(\sigma) := \sup_{\mu \in K} \epswidth(\sigma)$. When the context is clear, we drop the $\sigma$ from $\epswidth$ or $\epsSupwidth$. The following lemmas give some additional properties of  $\epsSupwidth(\sigma)$ that we use in Section \ref{section:algorithm_proofs} where we give algorithms to find the worst case LSE rate.

\begin{lemma} \label{lemma:overline_epsilon_K_diameter}  $\epsSupwidth(\sigma) \le d$ where $d$ is the diameter of $K$.
\end{lemma}
    \begin{proof}
        Fix any $\mu\in K$, and let $\delta>d$. Note that $w_{\mu}(\delta)=w_{\mu}(d)=w(K)$ since $B(\mu,d)\cap K=B(\mu,\delta)\cap K =K$. Thus, \[\sigma w_{\mu}(\delta)-\delta^2/2 < \sigma w_{\mu}(d)-d^2/2.\] Hence $\epswidth(\sigma)=\argmax_{\varepsilon}[\sigma w_{\mu}(\varepsilon)-\varepsilon^2/2]\le d$. Since this holds for all $\mu\in K$, we have $\epsSupwidth(\sigma) \le d$.
    \end{proof}



\begin{lemma} \label{lemma:epsilon_mu:nondecreasing} The map $\sigma\mapsto \epsSupwidth(\sigma)$ is non-decreasing on $[0,\infty)$. If $c\ge 1$, then $\epsSupwidth(\sigma) \leq \epsSupwidth(c \sigma)\leq c \epsSupwidth(\sigma)$. If $0\le c<1$, then $c\epsSupwidth(\sigma) \le \epsSupwidth(c\sigma)\le \epsSupwidth(\sigma)$. 
\end{lemma}

\subsection{Sufficient conditions on the worst case performance of the LSE}
\label{subsection:sufficient:conditions:worst:case}

We now consider the worst case risk of the LSE estimator and prove some simple bounds. Let 
\begin{align*}
    \epsLSE^2(\sigma) := \epsLSE^2 := \sup_{\mu \in K} \EE \|\hat \mu - \mu\|^2
\end{align*}
denote the worst case risk for the LSE estimator $\hat \mu$. Since $\hat \mu \in K$, clearly $\epsLSE \leq \diam(K) =: d$. We will now establish the following upper bound on $\epsLSE$.

\begin{table*}[t]
\caption{Controlling the worst case LSE rate with different choices of  $\overline{\varepsilon}(\sigma)$.}
  \centering
  \begin{tabular}{m{0.5cm}|m{10cm}|m{2cm}}
  \hline
     & Definition of $\overline{\varepsilon}=\overline{\varepsilon}(\sigma)$ & Usage \\
    \hline 
    1 & $\begin{aligned} &\sup_{\varepsilon}\{\varepsilon: \tfrac{\varepsilon^2}{2\sigma} \leq \sup_{\mu \in K} w_{\mu}(\varepsilon) \} \end{aligned}$ & Prop. \ref{proposition:first:sufficient:condition} \\[1.2ex] \hline 
    2 & $\begin{aligned} &\sup_{\varepsilon} \bigg\{\varepsilon: \sup_{\mu \in K} \big[w_{\mu}(\varepsilon) - \inf_{\nu \in B(\mu, \varepsilon) \cap K} w_{\nu}(\tfrac{\varepsilon}{c}) \big] \geq \left(4 + \tfrac{4}{c}\right) \tfrac{\varepsilon^2}{2\sigma}\bigg\}\end{aligned}$ &  Thm. \ref{big:width:minus:small:width:thm} \\[4ex] \hline
    3 & $\begin{aligned} &\sup_{\varepsilon} \Big\{\varepsilon: \sup_{\substack{\nu_1,\nu_2 \in K \\ \|\nu_1 - \nu_2\|\leq 2\varepsilon}} w_{\nu_1}(\tfrac{\varepsilon}{c^{\ast}})-  w_{\nu_2}(\tfrac{\varepsilon}{c^{\ast}})  - \tfrac{C \varepsilon^2}{2\sigma} \\ &\qquad \qquad + \tfrac{L}{c^{\ast}} \cdot\varepsilon \sqrt{\log \cMloc(\varepsilon)}\ge 0\Big\}\end{aligned}$ & Thm. \ref{difference:of:local:widths:compared:to:eps:squared:thm} \\[4ex] \hline
    4 & $\begin{aligned} &\sup_{\varepsilon}\Big\{\varepsilon:\sup_{\nu_1,\nu_2 \in K} w_{\nu_1}(\tfrac{\varepsilon}{c^{\ast}}) - w_{\nu_2}(\tfrac{\varepsilon}{c^{\ast}})  - \frac{C \varepsilon \|\nu_1 - \nu_2\|}{\sigma} \\ &\qquad\qquad + \tfrac{L}{c^{\ast}} \|\nu_1 - \nu_2\|\sqrt{\log \cMloc(\varepsilon)} \geq 0\Big\}\end{aligned}$ & Thm. \ref{Lipschitz:map:theorem} \\[4ex] \hline 
  \end{tabular}
  \vspace*{5pt}
  \label{tab:2}
\end{table*}

\begin{proposition}\label{proposition:first:sufficient:condition}
    Let $\overline \varepsilon:= \sup_{\varepsilon}\{\varepsilon^2/(2\sigma) \leq \sup_{\mu \in K} w_{\mu}(\varepsilon) \}$. Then $\epsLSE \lesssim \overline \varepsilon \wedge d$.
\end{proposition}

\begin{proof}

Since $\epsLSE\lesssim d$ always holds, it suffices to show $\epsLSE\lesssim \overline\varepsilon$. Recall we set $\epsSupwidth = \sup_{\mu \in K} \epswidth$, where $\epswidth = \argmax_{\varepsilon} \sigma w_{\mu}(\varepsilon)  - \varepsilon^2/2$. Observe that for every $\mu \in K$, we have $\sigma w_{\mu}(2\overline\varepsilon) - 2 \overline{\varepsilon}^2 < 0$, and thus by  \citet[Proposition 1.3]{chatterjee2014new} we have $2\overline\varepsilon > \epsSupwidth$. We now relate $\epsSupwidth$ to ${\epsLSE}$. 

\textsc{Case 1:} Suppose $\epsSupwidth \gtrsim \sigma$. Now pick $\tilde{\mu}$ that maximizes $\EE\|\hat\mu-\tilde\mu\|^2$. This leads to two subcases.

\textsc{Case 1(a):} Suppose $\epswidth[\tilde\mu]\gtrsim \sigma$. Then by definition of $\tilde\mu$ and $\epsSupwidth$ along with Lemma \ref{lemma:chatterjee:analogue}, we have \[\epsLSE^2=\EE\|\hat\mu-\tilde\mu\|^2\asymp\epswidth[\tilde\mu]^2\lesssim\epsSupwidth^2.\] But we showed $\epsSupwidth\lesssim \overline{\varepsilon}$, so we conclude $\epsLSE\lesssim \overline\varepsilon$ as claimed.   

\textsc{Case 1(b):} Suppose $\epswidth[\tilde\mu]\lesssim \sigma$. Then by Lemma \ref{lemma:chatterjee:analogue} and our Case 1 assumption, \[\epsLSE^2=\EE\|\hat\mu-\tilde\mu\|^2\lesssim \sigma^2 \lesssim\epsSupwidth^2 \lesssim 4\overline{\varepsilon}^2.\] This concludes Case 1.


\textsc{Case 2:} Suppose $\epsSupwidth \lesssim \sigma$. This means for any $\mu\in K$, $\epswidth\lesssim \sigma$ which in turn implies for all $\mu$ that $\EE\|\hat\mu-\mu\|\lesssim \sigma$ by Lemma \ref{lemma:chatterjee:analogue}. Hence $\epsLSE\lesssim \sigma$.
 
 \textsc{Case 2(a):} Suppose furthermore that $d \geq \frac{2\sigma (n+1)}{n\sqrt{2\pi}}$. We claim this implies $\overline{\varepsilon} \gtrsim\sigma$ which would prove $\overline{\varepsilon}\gtrsim \epsLSE$ since $\epsLSE\lesssim \sigma$ in Case 2. To see that $\overline{\varepsilon} \gtrsim\sigma$, notice that $K$ contains a diameter of length $2\sigma/\kappa$ (for a sufficiently large absolute constant $\kappa$). Thus by  \citet[Proposition 7.5.2(vi)]{vershynin2018high}, $\sup_{\mu \in K} w_\mu(\sigma/\kappa) \geq 2\sigma/(\kappa\sqrt{2\pi})$. Now for sufficiently large $\kappa$ we have $\sigma/(2\kappa^2) \leq 2\sigma/(\kappa\sqrt{2\pi})$, so that $\sup_{\mu \in K} w_\mu(\varepsilon) \ge \varepsilon^2/2\sigma$ holds for $\varepsilon = \sigma/\kappa$. But then  $\overline{\varepsilon} \geq \sigma/\kappa$ by definition as a supremum of such $\varepsilon$. Hence $\epsLSE \lesssim \sigma \lesssim \overline\varepsilon$ as desired. 
 
 \textsc{Case 2(b):} Suppose $d \leq \frac{2\sigma (n+1)}{n\sqrt{2\pi}}$. By Jung's Theorem and \citet[Proposition 7.5.2(vi)]{vershynin2018high}, there exists $\mu \in K$ such that \[w_\mu\left(\sqrt{\tfrac{n}{2(n + 1)}} d\right) = w(K) \geq \tfrac{d}{\sqrt{2\pi}}\ge \tfrac{d}{2\sqrt{\pi}}.\] Our assumption on $d$ implies $\frac{nd^2}{2(n + 1)(2\sigma)}  \leq \tfrac{d}{2\sqrt{2\pi}}$ which in turn implies that $w_\mu(\varepsilon) \ge \varepsilon^2/(2\sigma)$ for $\varepsilon= \sqrt{\frac{nd^2}{2(n + 1)}} \gtrsim d$. Therefore, $\overline{\varepsilon} \gtrsim  d$ due to its definition as a supremum, and since $d \lesssim \overline{\varepsilon}\wedge d \leq d$, we have $\epsLSE \lesssim d \asymp \overline{\varepsilon}\wedge d$.
\end{proof}

\begin{remark}
    It should be noted that Proposition \ref{proposition:first:sufficient:condition} is simply an upper bound on the rate of the LSE. We will later see an example with hyper-rectangles (Section \ref{subsubsection:hyperrectangle}) where this upper bound is very suboptimal.
\end{remark}

\begin{corollary}\label{cool:corollary}
    Suppose that $\sup_{\mu \in K} w_{\mu}(\varepsilon)/\varepsilon \lesssim \sqrt{\log \cMKloc(c \varepsilon)}$ for all $\varepsilon\le d$. Then the LSE is minimax optimal for all $\sigma$. For (centrally) symmetric sets it suffices to examine the $0$ point only (both for the maximal local Gaussian width and for the local entropy).
\end{corollary}


\begin{proof}
    For each $\mu$ the map $\varepsilon \mapsto  \tfrac{w_{\mu}(\varepsilon)}{\varepsilon}$ is non-increasing, hence $\varepsilon\mapsto\sup_{\mu \in K} \tfrac{w_{\mu}(\varepsilon)}{\varepsilon}$ is also non-increasing. To see this, note that for $\varepsilon < \delta$,  since $w_{\mu}(0)=0$, we have \begin{align*}
        w_{\mu}(\varepsilon) \ge \tfrac{\varepsilon}{\delta}\cdot w_{\mu}(\delta) + (1- \tfrac{\varepsilon}{\delta})\cdot w_{\mu}(0)
    \end{align*} by concavity of $\varepsilon\mapsto w_{\mu}(\varepsilon)$ \citep[Proof of Theorem 1.1]{chatterjee2014new},  which implies $\tfrac{w_{\mu}(\varepsilon)}{\varepsilon} \ge \tfrac{w_{\mu}(\delta)}{\delta}.$

    Recall Proposition \ref{proposition:first:sufficient:condition} and the definition of $\overline{\varepsilon}$ and note that $\epsLSE \lesssim \overline{\varepsilon}\wedge d\le d$. This means for some $c'>0$, $c'\epsLSE \le \overline{\varepsilon}$. Pick some $c''\in(0,1)$ so that $c'\epsLSE\le c''\overline{\varepsilon}$. By definition, $c''\overline\varepsilon/(2\sigma)\le  \sup_{\mu\in K}w_{\mu}(c''\overline\varepsilon)/(c''\overline\varepsilon)$. Using these facts and the aforementioned non-increasing property of $\varepsilon\mapsto\sup_{\mu \in K} \tfrac{w_{\mu}(\varepsilon)}{\varepsilon}$, we have \begin{align*}
        \frac{c'\epsLSE}{2\sigma} &\le \frac{c''\overline\varepsilon}{2\sigma}\le \frac{\sup_{\mu\in K}w_{\mu}(c''\overline\varepsilon)}{c''\overline\varepsilon} \\ &\le \frac{\sup_{\mu\in K}w_{\mu}(c'\epsLSE)}{c'\epsLSE}.
    \end{align*}
    
    If $c'>1$, then $\sup_{\mu\in K} \frac{w_{\mu}(c'\epsLSE)}{c'\epsLSE}\le \sup_{\mu\in K} \frac{w_{\mu}(\epsLSE)}{\epsLSE}$ again by the non-increasing property so that $\frac{\epsLSE}{2\sigma} \lesssim \sup_{\mu\in K}\frac{w_{\mu}(\epsLSE)}{\epsLSE}$. Then using our assumption, $\sup_{\mu \in K} \frac{w_{\mu}(\varepsilon)}{\varepsilon} \lesssim \sqrt{\log \cMloc(c \varepsilon)}$ holds for $\varepsilon=\epsLSE$ since $\epsLSE\le d$. Hence $\frac{\epsLSE^2}{\sigma^2} \lesssim \log \cMloc(c \epsLSE)$, which by Lemma \ref{lemma:equivalent:information:lower:bound} implies $\varepsilon^{\ast}\gtrsim \epsLSE$, i.e., the LSE is minimax optimal.

    On the other hand, if $c'\le 1$, then we have $\sup_{\mu\in K} w_{\mu}(c'\epsLSE)\le \sup_{\mu\in K} w_{\mu}(\epsLSE)$, therefore \begin{align*}
        \frac{c'\epsLSE}{2\sigma} &\le \frac{\sup_{\mu\in K}w_{\mu}(c'\epsLSE)}{c'\epsLSE} \le  \frac{\sup_{\mu\in K} w_{\mu}(\epsLSE)}{c'\epsLSE}\\ &\lesssim \sqrt{\log \cMloc(c\epsLSE)}.
    \end{align*} Once more we have $\varepsilon^{\ast} \gtrsim\epsLSE$.
    
    To see the last implication, let $K$ be a centrally symmetric set and suppose we have $w_{0}(\varepsilon)/\varepsilon \lesssim \sqrt{\log \cM(c\varepsilon/c^{\ast}, B(0,c\varepsilon)\cap K)}$ for all $\varepsilon\le d$. Pick any $\mu \in K$. Note that    
    \begin{align*}
        \MoveEqLeft \alpha [B(\nu, \varepsilon) \cap K] + (1-\alpha) [B(\mu, \varepsilon)\cap K ] \\ &\subseteq B(\alpha \nu + (1-\alpha) \mu, \varepsilon)\cap K.
    \end{align*}
    
    Take $\nu = -\mu$ and $\alpha = 1/2$, so that $(1/2)[B(-\mu,\varepsilon)\cap K] + (1/2)[B(\mu,\varepsilon)\cap K]\subseteq B(0,\varepsilon)\cap K$. The width of $B(-\mu,\varepsilon)\cap K$ and $B(\mu,\varepsilon)\cap K$ are the same by central symmetry. Hence \begin{align*}
        w_{\mu}(\varepsilon)=w(B(\mu,\varepsilon)\cap K)\le w(B(0,\varepsilon)\cap K)= w_{0}(\varepsilon).
    \end{align*}
    
    We then consider the local entropy. Let $\nu \in K$ be arbitrary, and let $\theta_1, \ldots, \theta_M$ be a maximal $c\varepsilon/c^*$ packing of $B(\nu, c\varepsilon)\cap K$. By central symmetry, the points $(\theta_i - \nu)/2$ form a $c\varepsilon/(2c^*)$ packing of $B(0, c\varepsilon/2)\cap K$. Therefore, \begin{align*}
       \log \cMloc(c\varepsilon) &= \sup_{\nu\in K}\log \cM(c\varepsilon/c^{\ast}, B(0,c\varepsilon)\cap K) \\ &\leq \log \cM(c\varepsilon/(2c^*), B(0, c\varepsilon/2)\cap K) \\ &\le \log \cMloc(c\varepsilon/2).
    \end{align*} Thus, $\sup_{\mu\in K} w_{\mu}(\varepsilon)/\varepsilon \le\sqrt{\log \cMloc(c\varepsilon/2)}$. Hence our hypothesis need only check the zero point of a centrally symmetric set. 
\end{proof}

\begin{remark}
    One may conjecture that the condition in Corollary \ref{cool:corollary} is also necessary. We will see a counterexample in Section \ref{subsubsection:hyperrectangle} with hyper-rectangles. We also consider ellipsoids and derive a necessary condition similar in spirit to the corollary. 
\end{remark}

\begin{remark}\label{remark:upper:bound:wK}Another quick corollary to Proposition \ref{proposition:first:sufficient:condition} is that $\epsLSE \lesssim \sqrt{\sigma w(K)}$ by trivially bounding  $w_\mu(\varepsilon) \leq w(K)$. This bound is achievable for some sets $K$; for an example, see Section \ref{subsubsection:optimality:ellipsoids}. 
\end{remark}

Proposition \ref{proposition:first:sufficient:condition} can be extended to a slightly more general upper bound. Suppose we are interested in the worst case risk over a convex subset $K' \subseteq K$. In other words let
\begin{align}\label{varesilongKprime:K}
    \epsLSE[K';K]^2 := \sup_{\mu \in K'} \EE \|\hat \mu - \mu\|^2
\end{align}
denote the worst case risk for the LSE estimator $\hat \mu$. We will now establish the following:

\begin{proposition}\label{proposition:first:sufficient:condition:generalized}
    Let $\overline \varepsilon_{K';K} := \sup_{\varepsilon}\{\varepsilon^2/(2\sigma) \leq \sup_{\mu \in K'} w_{K,\mu}(\varepsilon)\}$. Then $\epsLSE[K';K] \lesssim \overline \varepsilon_{K';K}$.
\end{proposition}

Since the proof of Proposition \ref{proposition:first:sufficient:condition:generalized} is almost identical to that of Proposition \ref{proposition:first:sufficient:condition}, we defer it to the appendix. So far we saw some sufficient conditions for the worst-case performance of the LSE. We now derive a similar version of the celebrated result of \citet{birge1993rates} using Proposition \ref{proposition:first:sufficient:condition}. For a related result, see \citet[Corollary 13.7]{wainwright2019high}.

\begin{corollary}
    Let $\overline\varepsilon$ be as defined in Proposition \ref{proposition:first:sufficient:condition}, and let $c'$ be some absolute constant. Suppose $\sigma = \frac{1}{\sqrt{n}}$. If $\varepsilon$ is such that $\int_{(c'/16)\varepsilon^2}^{2\varepsilon} \sqrt{\log \cM(t,K)} \mathrm{d}t \lesssim \sqrt{n}\varepsilon^2$, then $\varepsilon \gtrsim \overline{\varepsilon}$.
\end{corollary}
\begin{proof}
    Using Dudley's entropy bound (see \citet[Theorem 5.22]{wainwright2019high}), for any fixed $c$ we have
    \begin{align*}
         w_{\mu}(\varepsilon) \leq 2 \sqrt{n}c^2\varepsilon^2 + C\int_{c^2c'\varepsilon^2}^{2\varepsilon} \sqrt{\log \cM(t, B(\mu, \varepsilon) \cap K)} \mathrm{d}t,
    \end{align*}
    where $c',C$ are absolute constants. We can now bound
    \begin{align*}
        \log \cM(t, B(\mu, \varepsilon) \cap K) \leq \log \cM(t, K),
    \end{align*}
    which makes the bound independent of $\mu$. Let $c = 1/4$. Then
    \begin{align*}
         \sigma \sup_{\mu}w_{\mu}(\varepsilon) \leq \varepsilon^2/8 + C/\sqrt{n}\int_{(c'/16)\varepsilon^2}^{2\varepsilon} \sqrt{\log \cM(t, K)} \mathrm{d}t,
    \end{align*}
    and so as long as $\int_{(c'/16)\varepsilon^2}^{2\varepsilon} \sqrt{\log \cM(t, K)} \mathrm{d}t \lesssim \sqrt{n}\varepsilon^2$, we have $\sigma \sup_{\mu} w_{\mu}(\varepsilon) \le C'\varepsilon^2 $ for some absolute constant $C'>1$. This implies $\varepsilon\gtrsim \overline{\varepsilon}$. To see this, first observe that $\varepsilon \mapsto \frac{\sup_{\mu} w_{\mu}(\varepsilon)}{\varepsilon}$ is non-increasing. Therefore, \begin{align*}
       \frac{\sup_{\mu} w_{\mu}(2C'\varepsilon)}{2C'\varepsilon} &\le \frac{\sup_{\mu} w_{\mu}(\varepsilon)}{\varepsilon} \le \frac{C'\varepsilon }{\sigma}.
    \end{align*} Rearranging, $\sup_{\mu} w_{\mu}(2C'\varepsilon) \le \frac{(2C'\varepsilon )^2}{2\sigma}$, but by definition of $\overline\varepsilon$, this implies $\overline\varepsilon \le 2C'\varepsilon$.
\end{proof}

The next set of results bound the LSE with a geometric average of the minimax rate and a trivial estimator up to some logarithmic factors. Define $\underline{\varepsilon} = \sup_{\varepsilon}\{\varepsilon^2/(2\sigma) \leq \frac{1}{2}\sup_{\delta \leq \varepsilon} \tfrac{\delta}{c^{\ast}} \sqrt{\log \cMloc(\delta)}\}$. We first compare $\epsLSE$ to $\underline\varepsilon$ up to log factors (Theorem \ref{important:thm}) and then derive the geometric average result (Corollary \ref{corollary:geometric_average:minimax}). Remark \ref{remark:geometric_average:minimax} examines the sharpness of the bound and connects the results to a Donsker-regime assumption.

\begin{theorem} \label{important:thm} Let $C_n = 4 C\bigg({1 + \log_{c^{\ast}} \sqrt{2 \pi n}}\bigg)^{3/2}$, where $C > 1$ and $c^{\ast}$ are sufficiently large absolute constants (and $c^{\ast}$ is the constant from the definition of local entropy). Then $\epsLSE \lesssim (4 C_n \underline{\varepsilon}) \wedge d$. 
\end{theorem}

\begin{corollary} \label{corollary:geometric_average:minimax}The following inequality always holds for the LSE:
\begin{align*}
    \epsLSE \lesssim \sqrt{\sigma} C_n (\sqrt{\varepsilon^*}\sqrt[4]{n}).
\end{align*}
\end{corollary}
    \begin{proof}
         Let $\delta_{\kappa}\le \underline{\varepsilon}$ be such that $\delta_{\kappa} \sqrt{\log \cMloc(\delta_{\kappa})} \ge \sup_{\delta\le\underline\varepsilon}\delta \sqrt{\log \cMloc(\delta)} -\kappa$.

Case 1: $\delta_{\kappa} > \varepsilon^*$. Then using that $\delta_{\kappa}\le \underline\varepsilon$, the non-decreasing property of $\varepsilon\mapsto\log \cMloc(\varepsilon)$, and the definition of $\varepsilon^{\ast}$, we obtain \[\delta_{\kappa} \sqrt{\log \cMloc(\delta_{\kappa})} \leq \underline\varepsilon \sqrt{\log \cMloc(\varepsilon^*)} \asymp \frac{\underline\varepsilon \varepsilon^*}{\sigma}.\] Using the definition of $\underline\varepsilon$, we can show $\underline\varepsilon\lesssim\varepsilon^{\ast}$ since \begin{align*}
    \frac{\underline\varepsilon^2}{2\sigma} \leq \frac{1}{2c^{\ast}}\left(\delta_{\kappa} \sqrt{\log \cMloc(\delta_{\kappa})} +\kappa\right) \lesssim \frac{\underline\varepsilon\varepsilon^{\ast}}{\sigma}.
\end{align*} By Theorem \ref{important:thm}, we obtain \[\epsLSE\lesssim C_n\underline\varepsilon \lesssim C_n \varepsilon^* = C_n\sqrt{\varepsilon^{\ast}}\sqrt{\varepsilon^{\ast}}\lesssim C_n\sqrt{\varepsilon^*}\sqrt[4]{n}\sqrt{\sigma},\] where we use the fact that $\varepsilon^{\ast}\le \sqrt{n}\sigma$ (which follows since the minimax estimator is at least better than using $Y$ as an estimator of $\mu$, and this estimator has error rate $n\sigma^2$).

Case 2: $\delta_{\kappa} \leq \varepsilon^*$. Then we have  $ \delta_{\kappa} \sqrt{\log \cMloc(\delta_{\kappa})} \lesssim \varepsilon^*\sqrt{n}$. Here we use the fact that $\log \cMloc(\delta_{\kappa}) \lesssim n$. To see this, note that $\cMloc(\delta_{\kappa}) \le \cM(\delta_{\kappa}/c^{\ast}, B(\theta,\delta_{\kappa})) \lesssim (1 +{c^\ast})^n$ using the well-known metric entropy of a scaled $\ell_2$-ball. Again using $\underline\varepsilon^2 \lesssim \sigma(\delta_k\sqrt{\log \cMloc(\delta_{\kappa})} +\kappa)$, we have  \begin{align*}
    \epsLSE &\lesssim C_n\underline\varepsilon \lesssim C_n \sqrt{\sigma \cdot \left(\delta_k\sqrt{\log \cMloc(\delta_{\kappa})} +\kappa\right) } \\ &\lesssim  C_n\sqrt{\varepsilon^*}\sqrt[4]{n}\sqrt{\sigma}.
\end{align*}
    \end{proof}

\begin{remark}\label{remark:geometric_average:minimax}
We will see later that without further assumptions on $K$ this bound is sharp up to the logarithmic factors, i.e., there exist sets $K$ for which the bound is met with equality dropping the $\log$ factors and constant terms (Section \ref{subsubsection:optimality:ellipsoids}). For now, it suffices to say that the bound is sharp when $\sigma \ll 1/\sqrt{n}$, and $K$ is the unit $\ell_2$ ball.\footnotemark{} 

Note that the rate $\sqrt{\varepsilon^*} \sqrt{\sigma}\sqrt[4]{n}$ is the geometric mean of the optimal rate, and a trivial rate (which is achieved by using the observation $Y$ as the estimator of the mean). So the rate of the LSE is always not worse than this geometric mean. Furthermore, when $\log \cMloc(\delta) \asymp \delta^{-\alpha}$ for some $\alpha < 2$, the supremum satisfies $\sup_{\delta \leq \underline\varepsilon} \delta \sqrt{\log \cMloc(\delta)} \asymp (\varepsilon/c^{\ast}) \log \cMloc(\varepsilon)$. Hence,upon equating this to $\varepsilon^2/(2\sigma)$, we realize that the LSE is minimax optimal up to logarithmic factors for all $\sigma$. This latter assumption is known as the ``Donsker regime,'' although it is typically assumed that the global entropy scales like $\delta^{-\alpha}$, a stronger requirement than the same assumption on the local entropy (see, e.g., Lemma \ref{yb:local:entropy:bound}).
\end{remark}
\footnotetext{The minimax rate on the squared level for the unit $\ell_2$ ball is given by $\min(1,n \sigma^2)$ \citep[see][e.g.]{zhang2013nearly}.}

\subsection{Characterizations of the worst case rate of the LSE}
\label{subsection:characterizations:conditions:worst:case}

We will now see a series of results which attempt to characterize (up to constants) the worst case risk of the LSE. In the proof of these and several later results, we repeatedly use \citet[Proposition 1.3]{chatterjee2014new} to bound $\epswidth(\sigma)$. That proposition notes that $\varepsilon\mapsto w_{\mu}(\varepsilon) - \varepsilon^2/(2\sigma)$ is a strictly concave mapping, and as a consequence, if $\alpha \ge \beta >0$ and \[w_{\mu}(\alpha) - \alpha^2/(2\sigma) \ge w_{\mu}(\beta) - \beta^2/(2\sigma),\] then $\epswidth(\sigma)\ge \beta$. If on the other hand $w_{\mu}(\alpha) - \alpha^2/(2\sigma) \le w_{\mu}(\beta) - \beta^2/(2\sigma)$, then $\epswidth(\sigma)\le \alpha$. Choosing $\mu\in K$ and $\alpha,\beta>0$ appropriately will give us the desired bounds on $\epswidth(\sigma)$, and therefore $\epsSupwidth(\sigma)$ and $\epsLSE$ with the help of Lemma \ref{lemma:chatterjee:analogue}.

We proceed to our first proper characterization, which we use in Lemma \ref{lemma:strongly:convex:body} to prove suboptimality of the LSE for $\ell_p$ balls with $p\in(1,2)$. 

\begin{theorem}\label{big:width:minus:small:width:thm}
    Let $\overline \varepsilon(\sigma)$ be defined as
    \begin{align*}
        \sup_{\varepsilon} \bigg\{\varepsilon: \sup_{\mu \in K} \bigg[w_{\mu}(\varepsilon) - \inf_{\nu \in B(\mu, \varepsilon) \cap K} w_{\nu}(\tfrac{\varepsilon}{c}) \bigg] \geq \left(4 + \tfrac{4}{c}\right) \tfrac{\varepsilon^2}{2\sigma}\bigg\},
    \end{align*} where $c$ is some absolute constant. Set $\sigma' = 4c\sigma/(c-1)$. Then if $\overline{\varepsilon}(\sigma) \gtrsim \sigma$ for a sufficiently large constant, we have $\overline{\varepsilon}(\sigma)/c \lesssim {\epsLSE}(\sigma) \lesssim \overline{\varepsilon}(\sigma')$; if $\overline{\varepsilon}(\sigma') \lesssim \sigma$, we have ${\epsLSE}(\sigma) \asymp \sigma \wedge d$.
\end{theorem}

A similar result to the one above is given by the next theorem. We later apply it to demonstrate LSE optimality for subspaces (Section \ref{subsubsection:subspace:linear}) and suboptimality for pyramids (Lemma \ref{lemma:pyramid:suboptimal}). 

\begin{theorem}\label{difference:of:local:widths:compared:to:eps:squared:thm}Define $\overline{\varepsilon}(\sigma)$ by 
    \begin{align*}
         \sup_{\varepsilon} \bigg\{\varepsilon: &\sup_{\substack{\nu_1,\nu_2 \in K \\ \|\nu_1 - \nu_2\|\leq 2\varepsilon}}  w_{\nu_1}(\tfrac{\varepsilon}{c^{\ast}})-  w_{\nu_2}(\tfrac{\varepsilon}{c^{\ast}})  - \tfrac{C \varepsilon^2}{2\sigma} \\ &+ \tfrac{L}{c^{\ast}} \cdot\varepsilon \sqrt{\log \cMloc(\varepsilon)}\ge 0\bigg\}
    \end{align*}
    where $C = 2[(2 + 1/c^{\ast})^2-1/{c^{\ast}}^2] = 8 + 8/c^{\ast}$ for some $c^{\ast}>1$, and $L$ is an absolute constant. Set $\sigma' = \tfrac{C\sigma}{1-1/{c^{\ast}}^2}$.
    If $\overline{\varepsilon}(\sigma) \gtrsim \sigma$ for a sufficiently large constant, then $\overline{\varepsilon}(\sigma)/c^{\ast} \lesssim {\epsLSE}(\sigma) \leq \overline{\varepsilon}(\sigma')$. If $\overline{\varepsilon}(\sigma') \lesssim \sigma$, then ${\epsLSE}(\sigma) \asymp \sigma \wedge d$.
\end{theorem}

To prove this theorem, we additionally define 
\begin{align}\label{underline:varepsilon:def}
\underline\varepsilon^{\ast} = \sup\{\varepsilon>0:C^2 \varepsilon^{2}/(4\sigma^2) \leq (L/c^{\ast})^2 \log \cMloc(\varepsilon)\}.
\end{align}
Luckily up to constants this is the information theoretic lower bound defined in \eqref{varepsilon:star:def}, as we proved in Lemma \ref{lemma:equivalent:information:lower:bound}. 

\begin{remark} \label{remark:difference:of:local:widths}
    We have $\overline{\varepsilon}(\sigma) \ge \underline\varepsilon^{\ast} \asymp \varepsilon^{\ast}$. To see this, pick any $\varepsilon\le\underline\varepsilon^{\ast}$. By definition of $\underline\varepsilon^{\ast}$, we have $L/c^{\ast}\cdot\sqrt{ \log \cMloc(\varepsilon)} - C\varepsilon/2\sigma >0$, and by choosing $\nu_1,\nu_2$ appropriately, the expression we take the supremum over in $\overline\varepsilon(\sigma)$ is non-negative and $\varepsilon$ satisfies the given condition. By definition of $\overline{\varepsilon}(\sigma)$ as a supremum,  $\overline\varepsilon(\sigma) \ge \varepsilon$. Thus for any such $\varepsilon \le \underline\varepsilon^{\ast}$, we can show that $\overline\varepsilon(\sigma) \ge \varepsilon$, which implies $\overline{\varepsilon}(\sigma)\ge \underline\varepsilon^{\ast}$. Then we invoke Lemma \ref{lemma:equivalent:information:lower:bound}.
\end{remark}

In our next set of results, we analyze the Lipschitz constant of the map $\nu \mapsto w_\nu(\varepsilon)$ over $K$, noting that this map is always Lipschitz (Remark \ref{remark:Lipschitz}). It turns out this Lipschitz constant controls the worst case LSE rate $\epsLSE$ (Theorem \ref{Lipschitz:map:theorem}) and thus yields an equivalent condition for LSE optimality (Corollary \ref{corollary:Lipschitz}). We conclude the section by observing some easier ways to prove this map is Lipschitz (Remark \ref{remark:Lipschitz:on:boundary}). We apply Theorem \ref{Lipschitz:map:theorem} in our solids of revolution suboptimality result (Lemma \ref{lemma:solid:revolution}), and both this theorem and Corollary \ref{corollary:Lipschitz} trivially yield an optimality result for subspaces (Section \ref{subsubsection:subspace:linear}).

\begin{remark} \label{remark:Lipschitz} The mapping $\nu \mapsto w_\nu(\varepsilon)$ is Lipschitz over $K$.
\end{remark}
    \begin{proof}

     Suppose for a fixed Gaussian vector $\xi$, $x_{\xi}$ achieves the max $\langle \xi, x\rangle$ over $x\in B(\mu, \varepsilon) \cap K$ (if such a vector does not exist, i.e., the maximum is unattainable, the same reasoning works using a limiting sequence argument).  Then the vector 
     \begin{align*}
         y_{\xi}' = \frac{\varepsilon}{\|\mu- \nu\| + \varepsilon}\cdot x_{\xi} + \frac{\|\mu-\nu\|}{\|\mu- \nu\| + \varepsilon}\cdot \nu
     \end{align*} 
     belongs to $B(\nu, \varepsilon)\cap K$ since $y_{\xi}'$ is a convex combination of points in $K$ and $\|y_{\xi}' - \nu\| = \tfrac{\varepsilon}{\|\mu- \nu\| + \varepsilon} \cdot \|x_{\xi} - \nu\| \leq \varepsilon$. Using that $w_{\nu}(\varepsilon)\ge \EE_{\xi}\langle\xi,y_{\xi}'\rangle$ and expanding the definition of $y_{\xi}'$, we have
     \begin{align*}
         w_{\mu}(\varepsilon) - w_{\nu}(\varepsilon) &\le \EE_{\xi}\langle \xi, x_{\xi}\rangle -\EE_{\xi}\langle \xi, y_{\xi}'\rangle 
        \\ &\le \frac{\|\mu-\nu\|}{\|\mu- \nu\| + \varepsilon}(\EE_{\xi}\langle \xi, x_{\xi}\rangle - \underbrace{\EE_{\xi}\langle \xi, \nu\rangle}_{=0}) \\
         &= \frac{\|\mu - \nu\|\cdot w_{\mu}(\varepsilon)}{\|\mu- \nu\| + \varepsilon} \le  \frac{\|\mu - \nu\|w_{\mu}(\varepsilon)}{\varepsilon}.
     \end{align*} Now, take $\alpha \mu + (1-\alpha)\nu$ for $\alpha\in(0,1)$. We have by concavity of $\nu\mapsto w_{\nu}(\varepsilon)$ (Lemma \ref{lemma:concavity:wnu:epsilon}) and the penultimate inequality above that
    \begin{align*}
    \alpha(w_{\mu}(\varepsilon) - w_{\nu}(\varepsilon)) &\leq  w_{\alpha \mu + (1-\alpha)\nu}( \varepsilon) - w_{\nu}(\varepsilon) \\ &\leq \alpha \|\mu - \nu\|\cdot \frac{w_{\alpha \mu + (1-\alpha)\nu}( \varepsilon)}{\varepsilon}.
    \end{align*}
    Thus dividing by $\alpha$ and taking $\alpha \rightarrow 0$ (and noting that by its Lipschitz condition, the map $\alpha \rightarrow w_{\alpha \mu + (1-\alpha)\nu}( \varepsilon)$ is continuous) shows that 
    \begin{align*}
         w_{\mu}(\varepsilon) - w_{\nu}(\varepsilon) &\le  \frac{\|\mu - \nu\|w_{\nu}(\varepsilon)}{\varepsilon}.
     \end{align*}
    Using the symmetry in $\mu$ and $\nu$, we conclude
    \begin{align}
        |w_{\mu}( \varepsilon) -w_{\nu}( \varepsilon)| &\leq \|\mu-\nu\|\cdot\frac{w_{\nu}( \varepsilon)\wedge w_{\mu}( \varepsilon)}{\varepsilon} \notag \\ &\leq \sqrt{n}\|\mu - \nu\|. \label{remark:Lipschitz:result}
    \end{align} The final inequality used \citet[Proposition 7.5.2(vi)]{vershynin2018high}
    \end{proof}

Equipped with this fact, we now define a quantity $\overline{\varepsilon}(\sigma)$ which encodes the tightest such Lipschitz constant of our map, and then relate it to $\epsLSE(\sigma)$. The proof will rely on the concavity of $\nu\mapsto w_{\nu}(\varepsilon)$ over $K$ (Lemma \ref{lemma:concavity:wnu:epsilon}). 

\begin{theorem}\label{Lipschitz:map:theorem} Define $\overline{\varepsilon}(\sigma)$ by
    \begin{align*}
         \sup_{\varepsilon}\bigg\{\varepsilon:&\sup_{\nu_1,\nu_2 \in K} w_{\nu_1}(\varepsilon/c^{\ast}) - w_{\nu_2}(\varepsilon/c^{\ast}) - \frac{C \varepsilon \|\nu_1 - \nu_2\|}{\sigma} \\ &+\frac{L}{c^{\ast}} \|\nu_1 - \nu_2\|\sqrt{\log \cMloc(\varepsilon)} \geq 0\bigg\},
    \end{align*} where $C = 1+\frac{2}{c^{\ast}}$ and $L$ is a sufficiently large absolute constant. We further require that $c^{\ast}>2$. Set $\sigma' =4C\sigma/(1-4/{c^{\ast}}^2)$. Then ${\epsLSE}(\sigma) \lesssim\overline{\varepsilon}(\sigma')$. 
   If $\overline{\varepsilon}(\sigma) \gtrsim \sigma$ for a sufficiently big constant, we  have $\overline{\varepsilon}(\sigma) \lesssim {\epsLSE}(\sigma)$. If $\overline{\varepsilon}(\sigma') \lesssim \sigma$, then ${\epsLSE}(\sigma) \asymp \sigma \wedge d$.
\end{theorem}

The next corollary applies Theorem \ref{Lipschitz:map:theorem} to obtain an elegant characterization of LSE optimality. 

\begin{corollary} \label{corollary:Lipschitz} Fix $\sigma$. Then the LSE is minimax optimal if and only if the map $\mu \mapsto w_\mu(\varepsilon)$ is $(\varepsilon/\sigma)$-Lipschitz up to constants for all $\varepsilon\gtrsim\varepsilon^{\ast}(\sigma)$.
\end{corollary}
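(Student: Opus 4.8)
The plan is to deduce both implications directly from Theorem \ref{Lipschitz:map:theorem} after translating the two sides of the equivalence into statements about the quantity $\overline\varepsilon(\sigma)$ defined there. First I would unpack the terminology. Since $\varepsilon^{\ast}\wedge d\asymp\varepsilon^{\ast}$ and the minimax rate $\varepsilon^{\ast}$ is always a lower bound on $\varepsilon_K(\sigma)$, ``the LSE is minimax optimal'' means exactly $\varepsilon_K(\sigma)\lesssim\varepsilon^{\ast}(\sigma)$. Writing $L_K(\varepsilon):=\sup_{\nu_1\ne\nu_2\in K}|w_{\nu_1}(\varepsilon)-w_{\nu_2}(\varepsilon)|/\|\nu_1-\nu_2\|$ for the (finite, by Remark \ref{remark:Lipschitz}) Lipschitz constant of $\mu\mapsto w_\mu(\varepsilon)$, ``$\mu\mapsto w_\mu(\varepsilon)$ is $(\varepsilon/\sigma)$-Lipschitz up to constants for all $\varepsilon\gtrsim\varepsilon^{\ast}$'' means $L_K(\varepsilon)\lesssim\varepsilon/\sigma$ for all $\varepsilon\gtrsim\varepsilon^{\ast}$, with an absolute implied constant. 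The bridge between these is the observation that $\varepsilon$ belongs to the defining set of $\overline\varepsilon(\sigma)$ in Theorem \ref{Lipschitz:map:theorem} precisely when $L_K(\varepsilon/c^{\ast})\geq\frac{C\varepsilon}{\sigma}-\frac{L}{c^{\ast}}\sqrt{\log M^{\operatorname{loc}}(\varepsilon)}$ (divide the defining inequality by $\|\nu_1-\nu_2\|$ and take the supremum over pairs), together with $\sigma'\asymp\sigma$.

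For the direction ``optimal $\Rightarrow$ Lipschitz'', I would assume $\varepsilon_K(\sigma)\lesssim\varepsilon^{\ast}$ and first argue $\overline\varepsilon(\sigma)\lesssim\varepsilon^{\ast}$. If $\overline\varepsilon(\sigma)\gtrsim\sigma$, this is immediate from Theorem \ref{Lipschitz:map:theorem}'s lower bound $\overline\varepsilon(\sigma)/c^{\ast}\lesssim\varepsilon_K(\sigma)$. If instead $\overline\varepsilon(\sigma)\lesssim\sigma$, I would note that for $\varepsilon\geq(c^{\ast}+1)d$ one has $\log M^{\operatorname{loc}}(\varepsilon)=0$ and $B(\nu,\varepsilon/c^{\ast})\cap K=K$, so the defining expression is strictly negative for any $\nu_1\ne\nu_2$; hence $\overline\varepsilon(\sigma)\lesssim d$, and combining $\overline\varepsilon(\sigma)\lesssim\sigma$ with Lemma \ref{minimax:bound:used:prop2.2} gives $\overline\varepsilon(\sigma)\lesssim\sigma\wedge d\lesssim\varepsilon^{\ast}$. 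Either way $\overline\varepsilon(\sigma)\lesssim\varepsilon^{\ast}$. Then for every $\varepsilon$ exceeding a suitable absolute multiple of $\varepsilon^{\ast}$ we have $\varepsilon>\overline\varepsilon(\sigma)$, so $\varepsilon$ fails the defining condition, i.e. $L_K(\varepsilon/c^{\ast})<\frac{C\varepsilon}{\sigma}-\frac{L}{c^{\ast}}\sqrt{\log M^{\operatorname{loc}}(\varepsilon)}\leq\frac{C\varepsilon}{\sigma}$; substituting $\delta=\varepsilon/c^{\ast}$ yields $L_K(\delta)\lesssim\delta/\sigma$ for all $\delta\gtrsim\varepsilon^{\ast}$.

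For the converse ``Lipschitz $\Rightarrow$ optimal'', I would assume $L_K(\varepsilon)\leq C_0\,\varepsilon/\sigma$ for all $\varepsilon\gtrsim\varepsilon^{\ast}$ with $C_0$ absolute, and show $\overline\varepsilon(\sigma')\lesssim\varepsilon^{\ast}$; Theorem \ref{Lipschitz:map:theorem} then gives $\varepsilon_K(\sigma)\lesssim\overline\varepsilon(\sigma')\lesssim\varepsilon^{\ast}\asymp\varepsilon^{\ast}\wedge d$, which is optimality. Fix $\varepsilon$ above a suitable absolute multiple of $\varepsilon^{\ast}$, large enough that $\varepsilon/c^{\ast}\gtrsim\varepsilon^{\ast}$ and (since then $\varepsilon>\varepsilon^{\ast}$) $\sqrt{\log M^{\operatorname{loc}}(\varepsilon)}<\varepsilon/\sigma$. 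Applying the hypothesis at scale $\varepsilon/c^{\ast}$, the bound on $\sqrt{\log M^{\operatorname{loc}}(\varepsilon)}$, and $C\sigma/\sigma'=(1-4/{c^{\ast}}^2)/4$, the defining expression of $\overline\varepsilon(\sigma')$ is at most $\|\nu_1-\nu_2\|\,\frac{\varepsilon}{\sigma}\bigl(\frac{C_0+L}{c^{\ast}}-\frac{C\sigma}{\sigma'}\bigr)$, and the bracket is negative once $c^{\ast}$ is large enough relative to $C_0$ and $L$. Thus $\varepsilon$ lies outside the defining set of $\overline\varepsilon(\sigma')$, so $\overline\varepsilon(\sigma')\lesssim\varepsilon^{\ast}$, as needed.

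The main obstacle I anticipate is not a new estimate but the constant bookkeeping: reconciling the fixed constants $c^{\ast},C,L$ baked into Theorem \ref{Lipschitz:map:theorem} with the absolute Lipschitz constant $C_0$ — concretely arranging $c^{\ast}$ large enough that $(C_0+L)/c^{\ast}<C\sigma/\sigma'$ — and dispatching the degenerate regime $\overline\varepsilon\lesssim\sigma$ via $\overline\varepsilon(\sigma)\lesssim d$ together with $\varepsilon^{\ast}\gtrsim\sigma\wedge d$. Since enlarging $c^{\ast}$ changes $\varepsilon^{\ast}$ only up to constants (Lemma \ref{lemma:equivalent:information:lower:bound}), this is legitimate. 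One should also keep in mind that the supremum over $\nu_1,\nu_2$ in Theorem \ref{Lipschitz:map:theorem} is to be read as a supremum of the difference quotient (i.e. over $\nu_1\ne\nu_2$), which is exactly what makes $\overline\varepsilon$ encode the Lipschitz constant $L_K$.
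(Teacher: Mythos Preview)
Your proposal is correct and follows essentially the same route as the paper: both directions are deduced from Theorem~\ref{Lipschitz:map:theorem} by translating the Lipschitz condition into control of $\overline\varepsilon(\sigma)$ (and $\overline\varepsilon(\sigma')$), with the key observation that $\varepsilon$ lies outside the defining set of $\overline\varepsilon$ precisely when the Lipschitz quotient $L_K(\varepsilon/c^*)$ is dominated by $C\varepsilon/\sigma$ minus the entropy correction. Your treatment of the degenerate regime $\overline\varepsilon(\sigma)\lesssim\sigma$ is slightly cleaner than the paper's---you argue $\overline\varepsilon(\sigma)\lesssim\sigma\wedge d\lesssim\varepsilon^*$ directly via the large-$\varepsilon$ observation and Lemma~\ref{minimax:bound:used:prop2.2}, whereas the paper first invokes $\varepsilon^*\le\overline\varepsilon(\sigma)$ and then splits on $\sigma\lesssim d$ versus $\sigma\gtrsim d$---but the substance is the same, and your explicit flag that the supremum must be read over $\nu_1\ne\nu_2$ is a point the paper leaves implicit.
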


\begin{remark} \label{remark:Lipschitz:on:boundary} 
    
     \citet{lipschitz_duy_tran} demonstrate that to show a convex (or   concave) function is Lipschitz, it suffices to show it is locally Lipschitz near boundary points. Combined with Corollary \ref{corollary:Lipschitz}, we obtain a potentially easier way to verify the Lipschitz property of the mapping $\mu\mapsto w_{\mu}(\varepsilon)$ and thus the optimality of the LSE.

\end{remark}

\section{Examples} \label{section:LSE:examples}

In this section, we consider several examples in order to illustrate the utility of the theory we laid out in Section \ref{main:results:sec}. While some of the examples we consider are well known, the techniques we use to obtain these results are mostly distinct from existing techniques. In addition, we also exhibit many new results including counterexamples to Corollary \ref{cool:corollary} and Proposition \ref{proposition:first:sufficient:condition}, and new classes of examples where the LSE is suboptimal. We will repeatedly apply the following useful result  from \citet{yang1999information} to study the local entropy. 

\begin{lemma}[{\cite[Lemma 3]{yang1999information}}] \label{lemma:yang:barron}
    For any set $K$ we have,
    \begin{align}\label{yb:local:entropy:bound}
        \log \frac{\cM(\tfrac{\varepsilon}{c^*}, K)}{\cM(\varepsilon, K) } \leq \log \cMloc_K(\varepsilon) \leq \log \cM(\tfrac{\varepsilon}{c^*}, K). 
    \end{align}
\end{lemma}

\subsection{Examples with optimal LSE}
\label{subsection:example:optimal:LSE}

We begin with several examples where the LSE is (nearly) minimax optimal. 


\subsubsection{Isotonic regression with known total variation bound}

In this section, we consider the isotonic estimator with known total variation bound. The case with unknown total variation bound, i.e., $S^{\uparrow} = \{\mu: \mu_1\le \mu_2\le\dots \le \mu_n\}$, was essentially analyzed by \citet{chatterjee2014new} and he showed in his equation (51) that for any $\mu\in S^{\uparrow}$, we have \[w_{S^\uparrow,\mu}(\varepsilon) \leq C \sqrt{\max(\mu_n - \mu_1,1) \varepsilon n^{1/2}} + \varepsilon^2/4.\] Set $V = \mu_n-\mu_1$. Using Proposition \ref{proposition:first:sufficient:condition:generalized} with $\sigma = 1$,  the worst case rate of the LSE is upper bounded by $\varepsilon^2$ where $\varepsilon$ solves $\sup\{\varepsilon>0:\varepsilon^2/2 \leq C \sqrt{\max(V,1) \varepsilon n^{1/2}} + \varepsilon^2/4\}$, which is attained by $\varepsilon \asymp n^{1/6}(V\vee 1)^{1/3}$.  

We now consider the case with known total variation bound. Consider the set \[S_V^\uparrow := \{\mu: \mu_1 \leq \mu_2 \leq \ldots \leq \mu_n, \mu_n - \mu_1 \leq V\},\] for some $V \in \RR_+$. Importantly, we assume the value of $V$ or some appropriate upper bound is known to the statistician so that she can fit the LSE on the set $S_V^\uparrow$.
We will need to calculate the local entropy of the set $S_V^\uparrow$. Take any point $\mu^* \in S_V^\uparrow$. We need to pack at a distance $\varepsilon/c$ the set $S_{\mu^*}^\uparrow(V) := \{\mu: \mu \in S_V^\uparrow, \|\mu - \mu^*\|\leq \varepsilon\}$. Pick $\mu\in S_{\mu^*}^\uparrow(V)$. Write $\mu_1 - \mu_1^* =: \delta$  and suppose that $\delta < -V$. Using this assumption along with the fact that $\mu^{\ast},\mu\in S_V^{\uparrow}$, for any $i$, we have \[\mu_i^{\ast}\ge \mu_1^* = \mu_1 - \delta > \mu_1 + V \geq \mu_n \ge\mu_i\ge \mu_1.\] Hence for each $i$, \begin{align*}
    \mu_i^{\ast} - \mu_i &= \underbrace{\mu_i^{\ast} - \mu_1^{\ast}}_{\ge 0}+\underbrace{\mu_1^{\ast}-\mu_1}_{=-\delta}+\underbrace{\mu_1-\mu_i}_{\ge -V} \\
    &\ge -\delta-V  > 0.
\end{align*} Thus,
\begin{align*}
    \varepsilon^2 \geq \sum_{i \in [n]} (\mu_i - \mu_i^*)^2 \geq n(-\delta - V)^2.
\end{align*} Therefore $-\delta \leq V + \varepsilon/\sqrt{n}$, i.e., $\delta \ge -V-\varepsilon/\sqrt{n}$. On the contrary, if our assumption that $\delta<-V$ does not hold, we have $\delta\geq-V \ge -V-\varepsilon/\sqrt{n}.$ So in either case, $\delta \ge -V-\varepsilon/\sqrt{n}$.

Hence $\mu_1^* -\mu_1 =-\delta \leq V + \varepsilon/\sqrt{n}$.
A similar argument shows that $\mu_n \leq \mu_n^* + V + \varepsilon/\sqrt{n}$. Thus \[\mu_n-\mu_1 \le \mu_n^*-\mu_1^*+2V+\frac{2\varepsilon}{\sqrt{n}}\le 3V+\frac{2\varepsilon}{\sqrt{n}}.\] By \citet[Lemma 4.20]{chatterjee2014new} we therefore have that 
\begin{align}
    \log \cM(\varepsilon/c^*, S_{\mu^*}^\uparrow(V)) \lesssim \frac{(V\sqrt{n} + \varepsilon)}{(\varepsilon/c^*)} \asymp \frac{V\sqrt{n}}{\varepsilon}. \label{eq:univariate:isotonic:upper}
\end{align} 
Since the above holds for any $\mu^*$ it is an upper bound on the local entropy of $S_V^\uparrow$ by \eqref{yb:local:entropy:bound}. 

Next we will show a lower bound on $\log \cMloc_{S_V^\uparrow}(\varepsilon)$. To this end, we define the set $\cS^\uparrow(a,b) = \{\mu: a\leq \mu_1 \leq \ldots \leq \mu_n \leq b\}$. We are specifically interested in $S^{\uparrow}(0, V) = \{\mu: 0\leq \mu_1 \leq \ldots \leq \mu_n \leq V\}$. First, we note that by \citet[Lemma 4.20]{chatterjee2014new}, $\log \cM(\varepsilon, \cS^\uparrow(a,b)) \lesssim \frac{\sqrt{n}(b-a)}{\varepsilon}$. On the other hand, one can show using Varshamov-Gilbert's bound that $\log \cM(\varepsilon, \cS^\uparrow(a,b)) \gtrsim \frac{\sqrt{n}(b-a)}{\varepsilon}$ for values of $\varepsilon \gtrsim \frac{(b-a)}{\sqrt{n}}$. Formally we have:
\begin{lemma}\label{isotonic:lower:bound} We have that $\log \cM(\varepsilon, \cS^\uparrow(a,b)) \gtrsim \frac{\sqrt{n}(b-a)}{\varepsilon}$, for $\varepsilon \gtrsim \frac{(b-a)}{\sqrt{n}}$.
\end{lemma}

Because the metric entropy is non-increasing, for any $\varepsilon\lesssim \tfrac{b-a}{\sqrt{n}}$, we have \begin{align*}
    \log \cM(\varepsilon, \cS^\uparrow(a,b)) &\ge  \log \cM\left(\frac{b-a}{\sqrt{n}}, \cS^\uparrow(a,b)\right)\\ &\gtrsim\frac{\sqrt{n}(b-a)}{(b-a)/\sqrt{n}} \gtrsim n.
\end{align*} But also $\log \cM(\varepsilon, \cS^\uparrow(a,b))\lesssim n$ trivially holds always \citep[Example 5.8]{wainwright2019high}. So for all $\varepsilon\lesssim \tfrac{b-a}{\sqrt{n}}$, we have $\log \cM(\varepsilon, \cS^\uparrow(a,b))\asymp n$, while for all $\varepsilon\gtrsim \tfrac{b-a}{\sqrt{n}}$, $\log \cM(\varepsilon, \cS^\uparrow(a,b))\asymp \tfrac{\sqrt{n}(b-a)}{\varepsilon}$. Hence by using bound \eqref{yb:local:entropy:bound} one can see that for sufficiently large $c^*$, $\log \cMloc_{\cS^\uparrow(a,b)}(\varepsilon) \asymp \frac{\sqrt{n}(b-a)}{\varepsilon}$ for $\varepsilon \gtrsim \frac{(b-a)}{\sqrt{n}}$, and $\log \cMloc_{\cS^\uparrow(a,b)}(\varepsilon) \asymp n$ for $\varepsilon \lesssim \frac{(b-a)}{\sqrt{n}}$.  
 
 Taking $a=0$ and $b=V$, it follows that for $\varepsilon \gtrsim V/\sqrt{n}$, using $S^{\uparrow}(0,V)\subset S_V^{\uparrow}$, we have  $\sqrt{n}V/\varepsilon \lesssim \cM_{S^{\uparrow}(0,V)}^{\loc}(\varepsilon) \le \cM_{S_V^{\uparrow}}^{\loc}(\varepsilon)$, and we already argued from \eqref{eq:univariate:isotonic:upper} that $\cM_{S_V^{\uparrow}}^{\loc}(\varepsilon)\lesssim \sqrt{n}V/\varepsilon$. Hence $\cM_{S_V^{\uparrow}}^{\loc}(\varepsilon)\asymp \sqrt{n}V/\varepsilon$ for $\varepsilon \gtrsim V/\sqrt{n}$. 
 
 Let us apply Theorem \ref{important:thm} to upper bound $\epsLSE(\sigma)$. Consider the function \[\Psi\colon \varepsilon \mapsto \sup_{\delta \leq \varepsilon} \delta\sqrt{\log \cMloc_{\cS^\uparrow_V}(\delta)},\] noting that $\Psi(\varepsilon) \lesssim \min(\sqrt{\varepsilon \sqrt{n} V}, \sqrt{n}\varepsilon)$ for values of $\varepsilon \ge V /\sqrt{n}$ using \eqref{eq:univariate:isotonic:upper} and the trivial bound of $n$. Equating this to $\varepsilon^2/(2\sigma)$, we obtain $\varepsilon \asymp n^{1/6}V^{1/3}\sigma^{2/3} \wedge \sqrt{n}\sigma$. This matches the minimax rate that one can obtain from the equation $\log \cMloc_{S_V^{\uparrow}}(\varepsilon) \asymp \frac{\varepsilon^2}{\sigma^2}$ for $\varepsilon \geq V/ \sqrt{n}$. 
 
 Thus we conclude that the LSE is minimax optimal (up to logarithmic factors). We note that it is known that the LSE is exactly minimax optimal (see \citet{zhang2002risk, bellec2018sharp} for upper bounds and \citet[Corollary 5]{bellec2015sharp} for lower bounds). 

\subsubsection{Multivariate Isotonic Regression} \label{section:multivariate:isotonic:optimal}

We now demonstrate the known result that the LSE for multivariate isotonic regression (with known total variation) is optimal up to logarithmic factors. We will use a combination of existing work from \citet{isotonic_general_dimensions} on the Gaussian width, \citet{gao_Wellner_monotone} on packing numbers, and Remark \ref{remark:upper:bound:wK}. 

Let us introduce the set-up from \citet{isotonic_general_dimensions}. Let $\LL_{p,n}= \prod_{j=1}^p \{\frac{1}{n^{1/p}},\frac{2}{n^{1/p}},\dots, 1\}$ be the lattice with $(n^{1/p})^p=n$ points in $\RR^p$. For $p=1$, these are just the equispaced points from $1/n$ to $1$ (inclusive) of distance $n^{-1}$ apart. We will consider $p$ to be a fixed constant that does not scale with $n$.

Let \[\cF_p=\{f\colon [0,1]^p \to[0,1], f \text{ non-decreasing in each variable}\},\] where by non-decreasing in each variable we mean that \begin{align*}
    \MoveEqLeft f(z_1,\dots,z_{i-1},z_i,z_{i+1},\dots, z_p)\\  &\le f(z_1,\dots,z_{i-1}, z_i+y_i,z_{i+1},\dots, z_p)
\end{align*} for any $(z_1,\dots,z_p)\in [0,1]^p$, $y_i> 0$, and $1\le i \le p$. We are interested in the set of evaluations of monotone function on the lattice, i.e., \begin{align*}
    \MoveEqLeft Q_{a,b} =\{(f(l_1), f(l_2),\dots, f(l_n))\mid \\ &\qquad f\colon[0,1]^p\to[a,b] \text{ non-decreasing in each variable} \} 
\end{align*} where $l_1,\dots,l_n$ are the distinct elements of $\LL_{p,n}$. For $a=0$ and $b=1$, $Q_{0,1}$ is the set of tuples of  evaluations of functions in $\cF_p$.

Let us compute an upper bound on the log metric entropy of $Q_{a,b}$ by using the log-entropy of $\cF_p$, as is done in \citet[Lemma 4.20]{chatterjee2014new} in a univariate case. We are using the packing numbers which are up to absolute constants of the same order as the covering numbers used in \citet{chatterjee2014new} and \citet{gao_Wellner_monotone}. Here we use the $L^2$-norm with Lebesgue measure in $p$-dimensions in the metric entropy calculations for $\cF_p$.

\begin{lemma}[{\cite[Theorem 1.1]{gao_Wellner_monotone}}] \label{gao_Wellner_theorem} There is an absolute constant $C$ such that if $p>2$, we have \[\log \cM(\varepsilon, \cF_p) \le C\varepsilon^{-2(p-1)}.\] If $p=2$, then  \[\log \cM(\varepsilon, \cF_p) \le C\varepsilon^{-2}(\log 1/\varepsilon)^2.\]
\end{lemma} 
 
Using this result, we will obtain the following: \begin{lemma} \label{multivariable_isotone_upper} Let $Q_{a,b}$ be the set of evaluations of monotone functions on the lattice defined above. Then for $p>2$, we have \[\log \cM(\varepsilon, Q_{a,b}) \le C\left(\frac{\varepsilon}{2\sqrt{n}(b-a)}\right)^{-2(p-1)}\] for some absolute constant $C$. For $p=2$, we have \[\log \cM(\varepsilon, Q_{a,b}) \le C\left(\frac{\varepsilon}{2\sqrt{n}(b-a)}\right)^{-2}\left(\log \frac{2\sqrt{n}(b-a)}{\varepsilon}\right)^2.\]
\end{lemma} The proof is broadly similar to \citet{chatterjee2014new} except instead of constructing monotone step functions on the real-line, we construct a multivariate analogue on $[0,1]^p$. We will upper bound the cardinality of a minimal $\varepsilon$-covering of $Q_{a,b}$, which therefore bounds the cardinality of a maximal $\varepsilon$-packing of $Q_{a,b}$.

Next, we derive in Lemma \ref{multivariable_isotone_lower} a lower bound on $\log \cM(\varepsilon, Q_{a,b})$ following the technique in \citet[Proposition 2.1]{gao_Wellner_monotone}. The proof involves partitioning $[0,1]^p$ into cubes of side length $\varepsilon$ and constructing many piece-wise monotone functions whose values disagree with each other on sufficiently many cubes (equivalently, binary strings in some dimension that have sufficiently large Hamming distances between each other).

\begin{lemma} \label{multivariable_isotone_lower}  Let $Q_{a,b}$ be the set of evaluations of monotone functions on the lattice defined above. Then for $p\ge 2$, if $\frac{\varepsilon}{b-a} \gtrsim \sqrt{n/n^{1/p}}$, we have $\log \cM\left(\varepsilon, Q_{a,b}\right) \gtrsim \left(\frac{\varepsilon}{2\sqrt{n}(b-a)}\right)^{-2(p-1)}.$ 
\end{lemma}

To summarize the results of these two lemmas, when $\frac{\varepsilon}{b-a} \gtrsim \sqrt{n/n^{1/p}}$, we have for $p\ge 2$ that \begin{align} \label{eq:global:entropy:multivar:isotone}
    \MoveEqLeft \left(\tfrac{\varepsilon}{2\sqrt{n}(b-a)}\right)^{-2(p-1)} \\ &\lesssim \log \cM\left(\varepsilon, Q_{a,b}\right) \notag\\ &\lesssim \begin{cases}
        \left(\frac{\varepsilon}{2\sqrt{n}(b-a)}\right)^{-2}\Big(\log \frac{2\sqrt{n}(b-a)}{\varepsilon}\Big)^2, & p =2, \\ \left(\frac{\varepsilon}{2\sqrt{n}(b-a)}\right)^{-2(p-1)}, & p > 2. 
    \end{cases}\notag
\end{align} In the following lemma, we use the bound \eqref{yb:local:entropy:bound} to derive the local metric entropy from the global entropy results when $p>2$. 

\begin{lemma} \label{lemma:multivariate:isotonic:local:metric} Suppose $\frac{\varepsilon}{b-a} \gtrsim \sqrt{n/n^{1/p}}$ and $p>2$. Then for sufficiently large $c^{\ast}$, $\log \cM_{Q_{a,b}}^{\loc}\left(\varepsilon\right) \asymp \left(\frac{\varepsilon}{2\sqrt{n}(b-a)}\right)^{-2(p-1)}$. 
\end{lemma}

Using this result, we show that the least squares estimator achieves the minimax rate up to logarithmic factors. Consider the set $Q_{a,b}$ where $a=-b=\frac{1}{\sqrt{n}}$, and take $\sigma \ge \frac{1}{\sqrt{n}}$.
Then we have for $\frac{1}{n^{1/2p}} \lesssim \varepsilon \lesssim 1 $ (noting that the packing set has cardinality 1 when $\varepsilon$ is larger than the diameter) and $p>2$ that \begin{equation} \label{eq:multivariate_isotonic_metric_entropy}
    \log \cM_{Q_{-1/\sqrt{n}, 1/\sqrt{n}}}^{\loc}\left(\varepsilon\right) \asymp \varepsilon^{-2(p-1)}.  
\end{equation}

\begin{lemma} \label{lemma:multivariate:isotone:LSE:optimal} Suppose $p>2$. Take $1/\sqrt{n}\le \sigma \le 1$. Then minimax rate $\varepsilon^{\ast}=\sup\{\varepsilon:\varepsilon^2/\sigma^2\le\log \cM_{Q_{-1/\sqrt{n}, 1/\sqrt{n}}}^{\loc}\left(\varepsilon\right)\}$ satisfies $\varepsilon^{\ast}\asymp\sigma^{1/p}$. If $\sigma = \frac{1}{\sqrt{n}}$, then the LSE rate satisfies $\epsLSE \lesssim \sigma^{1/p}\log^2(n).$ 
\end{lemma}
 
Thus, we have shown for $p>2$ the LSE achieves the minimax rate up to a logarithmic factor when $\sigma = \frac{1}{\sqrt{n}}$, as already demonstrated in \citet{isotonic_general_dimensions}. We later show in Section \ref{section:suboptimal:multivariate} that if $\sigma > \frac{1}{\sqrt{n}}$, the LSE is suboptimal for some values of $\sigma$. In an improvement to the LSE, \citet{deng_and_zhang_2020} propose a computationally-tractable block estimator that actually achieves the minimax rate for a range of $\sigma$ without the logarithmic factor under some moment conditions on the noise term.

\subsubsection{Hyperrectangle Example} \label{subsubsection:hyperrectangle}

We will first argue that the LSE is optimal on any hyperrectangle. This appears to be a folklore fact, but since we could not find a reference containing a proof we attach an argument for completeness. Since it is hard to evaluate the local widths for arbitrary hyperrectangles we use other means. First observe that without loss of generality we may assume that the hyperrectangle is axis aligned. This is because the LSE is invariant to shifts and rotations. Let $H := \prod_{i = 1}^n [-a_i/2,a_i/2] $ for some  $a_1,\dots,a_n \in \RR^+$, where we assume without loss of generality that $a_1 \leq \ldots \leq a_n$.

First observe that the minimization $\sum_{i = 1}^n (Y_i - \nu_i)^2$ subject to $\nu \in H$ splits into minimizations $\hat \nu_i := \min_{\nu_i \in [-a_i/2,a_i/2]} (Y_i - \nu_i)^2$. In the one dimensional case with convex set $K = [-a_i/2, a_i/2]$, the estimator defined by \citet{neykov2022minimax} coincides with $\hat \nu_i$ (see Lemma \ref{LSE:is:minimax:in:R1}). It follows from \citet[Corollary III.1]{neykov2022minimax} (see also \citet{donoho1990minimax}) that $\EE (\hat \nu_i - \mu_i)^2 \asymp \min(a_i^2,\sigma^2)$, a fact which is also easy to directly verify. Thus the LSE has risk
\begin{align} 
    \EE \|\hat \nu - \mu\|^2 &= \sum_{i = 1}^n \EE (\hat \nu_i - \mu_i)^2 \asymp \sum_{i = 1}^n \min (a_i^2,\sigma^2) \notag \\ &\asymp [(k+2)\sigma^2 \wedge \sum_{i = 1}^n a_i^2], \label{eq:minimax:rate:hyperrectangle}
\end{align}
where $k \in \{0,n-1\}$ is such that $(k + 1)\sigma^2 \leq \sum_{i = 1}^{n-k}a_i^2$ and $(k + 2)\sigma^2 \geq \sum_{i = 1}^{n-k-1}a_i^2$. The last implication is proved in the following lemma. But \citet[Section III.A]{neykov2022minimax} establishes $[(k+2)\sigma^2 \wedge \sum_{i = 1}^n a_i^2]$ as the minimax rate in $n$ dimensions, proving optimality of the LSE. Note that we did not directly apply Corollary III.1 to the $n$-dimensional optimization since we would have to establish the estimator from \citet{neykov2022minimax} in $\RR^n$ coincides with the LSE, which may not be true.

\begin{lemma} \label{hyperrectangle:optimal:lemma} We have $\sum_{i = 1}^n \min (a_i^2,\sigma^2) \asymp [(k+2)\sigma^2 \wedge \sum_{i = 1}^n a_i^2]$ where $k$ is as defined above.
\end{lemma}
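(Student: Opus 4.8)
The plan is to prove the claimed equivalence $\sum_{i=1}^n \min(a_i^2,\sigma^2) \asymp (k+2)\sigma^2 \wedge \sum_{i=1}^n a_i^2$ by a direct computation, splitting on whether the truncation index $k$ is zero or positive. Recall $a_1 \le \cdots \le a_n$ and $k \in \{0,1,\dots,n-1\}$ is chosen so that $(k+1)\sigma^2 \le \sum_{i=1}^{n-k} a_i^2$ and $(k+2)\sigma^2 \ge \sum_{i=1}^{n-k-1} a_i^2$ (interpreting the empty sum as $0$ when $k=n-1$). The key structural observation is that, because the $a_i$ are sorted, the index $k$ splits the coordinates into the ``small'' block $a_1,\dots,a_{n-k}$ (where typically $a_i \lesssim \sigma$, so $\min(a_i^2,\sigma^2) = a_i^2$ up to the boundary term) and the ``large'' block $a_{n-k+1},\dots,a_n$ (where $a_i \gtrsim \sigma$, so $\min(a_i^2,\sigma^2)=\sigma^2$), giving $k$ copies of $\sigma^2$ from the large block.

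First I would handle the case $k=0$. Then the defining inequality gives $2\sigma^2 \ge \sum_{i=1}^{n-1} a_i^2$, so $\sum_{i=1}^n \min(a_i^2,\sigma^2) \le \sum_{i=1}^n a_i^2 = \sum_{i=1}^{n-1}a_i^2 + a_n^2 \le 2\sigma^2 + a_n^2$. If $a_n^2 \le \sigma^2$ this is $\le 3\sigma^2$ and also $\le 3 \sum_{i=1}^n a_i^2$ trivially-ish; if $a_n^2 > \sigma^2$ then $\min(a_n^2,\sigma^2)=\sigma^2$ so $\sum_i \min(a_i^2,\sigma^2) \le 2\sigma^2 + \sigma^2 = 3\sigma^2 = \tfrac{3}{2}(k+2)\sigma^2$, and for the lower bound one notes $\sum_i \min(a_i^2,\sigma^2) \ge \min(a_n^2,\sigma^2) \ge \tfrac12(k+2)\sigma^2$ when $\sigma^2 \le \sum a_i^2$... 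I would more carefully track: both sides are $\asymp \min(2\sigma^2, \sum a_i^2)$ up to absolute constants, which is exactly $(k+2)\sigma^2 \wedge \sum a_i^2$ with $k=0$.

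For the case $k \ge 1$, I would first show the two-sided bound on the small block. The lower defining inequality $(k+1)\sigma^2 \le \sum_{i=1}^{n-k} a_i^2$ combined with $\sum_{i=1}^{n-k-1}a_i^2 \le (k+2)\sigma^2$ yields $a_{n-k}^2 \ge \sigma^2$ (subtract: $a_{n-k}^2 = \sum_{i=1}^{n-k}a_i^2 - \sum_{i=1}^{n-k-1}a_i^2 \ge (k+1)\sigma^2 - (k+2)\sigma^2$ — that's negative, so instead I argue $a_{n-k}^2 \ge \sigma^2$ from: if $a_{n-k}^2 < \sigma^2$ then all of $a_1,\dots,a_{n-k} < \sigma$, but actually this requires the precise extremal definition of $k$; alternatively, since $k$ is the \emph{largest} index with the stated property, or however \citet{neykov2022minimax} pins it down, one gets $a_{n-k+1}^2 \ge \sigma^2 \ge$ something). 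Modulo getting this boundary inequality right, the computation is: $\sum_{i=1}^n \min(a_i^2,\sigma^2) = \sum_{i=1}^{n-k}\min(a_i^2,\sigma^2) + \sum_{i=n-k+1}^n \min(a_i^2,\sigma^2)$. The second sum equals $k\sigma^2$ once $a_i \ge \sigma$ for $i > n-k$. The first sum is $\le \sum_{i=1}^{n-k} a_i^2$ which by the upper defining inequality (applied after peeling $a_{n-k}^2 \le \sigma^2$ or $\asymp \sigma^2$) is $\le (k+2)\sigma^2 + \sigma^2 \lesssim (k+2)\sigma^2$, and is also trivially $\le \sum_{i=1}^n a_i^2$; for the lower bound the first sum is $\ge \min(a_{n-k}^2,\sigma^2)$ giving another $\asymp \sigma^2$ or so, and $k\sigma^2 + \Theta(\sigma^2) \asymp (k+2)\sigma^2$. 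Combining, $\sum_i \min(a_i^2,\sigma^2) \asymp (k+2)\sigma^2 \wedge \sum_i a_i^2$.

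The main obstacle I anticipate is pinning down the exact boundary relations forced by the specific definition of $k$ — in particular establishing that $a_{n-k}$ (or $a_{n-k+1}$) sits at the $\asymp\sigma$ scale so that the ``small block sum'' is controlled by $(k+2)\sigma^2$ and the ``large block'' genuinely contributes $k$ terms of size $\ge \sigma^2$. Everything else is bookkeeping with absolute constants. I would therefore begin the proof by carefully restating the two defining inequalities for $k$, deriving from them that $\sum_{i=1}^{n-k}a_i^2 \le (k+2)\sigma^2 + a_{n-k}^2$ and that $a_i^2 \ge \sigma^2$ for all $i \ge n-k+1$ (using monotonicity and the lower inequality at the next index), and then assembling the chain of inequalities above; I expect the constants to come out as something like a factor of $3$ on each side.
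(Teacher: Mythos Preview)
Your upper bound is essentially fine once you note $\sum_{i=1}^{n-k}\min(a_i^2,\sigma^2)\le \sum_{i=1}^{n-k-1}a_i^2+\sigma^2\le (k+3)\sigma^2$ and $\sum_{i=n-k+1}^n\min(a_i^2,\sigma^2)\le k\sigma^2$. The lower bound, however, has a real gap: the assertion that $a_i^2\ge\sigma^2$ for all $i\ge n-k+1$ does \emph{not} follow from the two defining inequalities for $k$, and there is no ``largest $k$'' extremality in the definition to rescue it. Concretely, take $n=4$, $a_1=\cdots=a_4=0.9$, $\sigma=1$: then $k=1$ satisfies $(k+1)\sigma^2=2\le 3(0.81)=2.43$ and $(k+2)\sigma^2=3\ge 2(0.81)=1.62$, yet $a_4=0.9<\sigma$, so your ``large block'' contributes strictly less than $k\sigma^2$.

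The paper's fix is not to try to force $a_{n-k}\asymp\sigma$, but to \emph{case-split} on whether $a_{n-k}^2<\sigma^2$ or $a_{n-k}^2\ge\sigma^2$. If $a_{n-k}^2<\sigma^2$, monotonicity gives $\min(a_i^2,\sigma^2)=a_i^2$ for all $i\le n-k$, so $\sum_{i=1}^{n-k}\min(a_i^2,\sigma^2)=\sum_{i=1}^{n-k}a_i^2\ge (k+1)\sigma^2$ by the first defining inequality. If $a_{n-k}^2\ge\sigma^2$, monotonicity gives $\min(a_i^2,\sigma^2)=\sigma^2$ for all $i\ge n-k$, so $\sum_{i=n-k}^n\min(a_i^2,\sigma^2)=(k+1)\sigma^2$. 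Either way the full sum is at least $(k+1)\sigma^2\asymp(k+2)\sigma^2$, and trivially at most $\sum a_i^2$, giving the lower bound. The moral is that the two defining inequalities already encode the dichotomy; you just have to split on it rather than try to collapse it.
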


Consider now the hyperrectangle $H$ defined by taking $a_i = 1/\sqrt{n}$ for $ 1\le i \leq n-1$ and $a_n = C \gg 1$. Let $c^{\ast}>1/2$ be the sufficiently large absolute constant in \citet{neykov2022minimax} and then consider the Gaussian width of $B(0,2c^{\ast})\cap H$, i.e., $w_{H,0}(2c^{\ast})=\EE \sup_{x \in H, \|x\|\leq 2c^*} \langle x, \xi \rangle$. Taking $x_i = \sign(\xi_i)/(2\sqrt{n})$, we see that the width is $\gtrsim \sqrt{n}$. On the other hand, consider packing $B(0,2c^{\ast})\cap H$ at a distance $2c^*/c^* = 2$, i.e., computing $\cM_H^{\loc}(2c^{\ast})$.  Observe that the intersection of the ball and $H$ is fully contained in a hyperrectangle $H'$ of side lengths $n^{-1/2} \times n^{-1/2} \times \cdots n^{-1/2}\times 4c^*$, so $\cM_H^{\loc}(2c^{\ast})\le \cM_{H'}^{\loc}(2c^{\ast})$. Partition $H'$ into hypercubes of side length $1/\sqrt{n}$ along the long edge of $H'$, with possibly a region left-over of diameter $\le 1$.  We can fit at most $\lceil 4c^*/(1/\sqrt{n}) \rceil \le 4c^* \sqrt{n} + 1$ hypercubes inside $H'$. Note that each of these hypercubes of side length $1/\sqrt{n}$ can have at most $1$ point from the packing set since its diameter is bounded by $1$. This means $\log \cM_H^{\loc}(2c^{\ast})\le \log \cM_{H'}^{\loc}(2c^{\ast})\lesssim \log n$. Thus $w_{H,0}(2c^{\ast})/2c^{\ast}
\gtrsim \sqrt{n}\gg \sqrt{\log n} \gtrsim \sqrt{\log \cM_{H}^{\loc}(2c^{\ast})}$, noting that $2c^{\ast}$ is smaller than the diameter since we take $C\gg 1$. Thus, the sufficient condition from Corollary \ref{cool:corollary} is not necessary for LSE optimality.

The same example can serve to show that the condition of Proposition \ref{proposition:first:sufficient:condition} is only a sufficient condition. Suppose $a_n = C \geq \sqrt[4]{n}$ and $\sigma = 1$.  Since for $\varepsilon$  of constant order the width is at least proportional to $\sqrt{n}$, we can show $\bar \varepsilon \gtrsim \sqrt[4]{n}$. To see this, take $\varepsilon\asymp \sqrt[4]{n}$, so \[\frac{\varepsilon^2}{2\sigma}\asymp \sqrt{n}\lesssim w_{H,0}(1)\le w_{H,0}(\varepsilon)\le \sup_{\mu\in H}w_{H,\mu}(\varepsilon).\] Then, $\overline\varepsilon\gtrsim \varepsilon\asymp\sqrt[4]{n}$ by definition as a supremum. Since $d\ge a_n \ge \sqrt[4]{n}$, we have $\overline\varepsilon\wedge d\gtrsim\sqrt[4]{n}$. Hence Proposition \ref{proposition:first:sufficient:condition} gives a prediction which is far away from the true LSE rate which is constant in this case, using \eqref{eq:minimax:rate:hyperrectangle}.  

\subsubsection{Subspace (Linear Regression)} 
\label{subsubsection:subspace:linear}

Suppose we are given a linear regression model $Y = X\beta + \varepsilon$, where $\varepsilon \sim \cN(0,\sigma^2\II_p)$ and $X$ is an arbitrary fixed design matrix $X \in \RR^{n \times p}$. This is a special case of our setting where we can consider $K = \operatorname{col}(X)$ to be a fixed $p$-dimensional subspace in $\RR^n$. It is very simple to see that $w_{\mu}(\varepsilon) = w_{\nu}(\varepsilon)$ for any $\mu,\nu \in K$, so that the LSE will be minimax optimal by Corollary \ref{corollary:Lipschitz} (or Theorem \ref{difference:of:local:widths:compared:to:eps:squared:thm}). This is a significant albeit well-known result: for any fixed design $X$, the least squares procedure (i.e., linear regression) is optimal in terms of in-sample squared prediction error as long as $p \leq n$. By in-sample prediction error, we mean $\EE_\varepsilon \|X(\hat \beta - \beta)\|_2^2$, where we assume $\mu = X\beta \in \operatorname{col}(X)$, and the least squares estimate $\hat \mu \in \operatorname{col}(X)$ is $\hat \mu = X \hat \beta$ where $\hat \beta \in \argmin_{\gamma \in \RR^p} \|Y- X\gamma\|_2^2$. To see how this result can be derived using other means, one can consult Exercise 13.2, Example 13.8,  and Example 15.14 in \citet{wainwright2019high}.

\subsubsection{\texorpdfstring{$\ell_1$}{} ball and \texorpdfstring{$\ell_2$}{} balls: LSE is optimal} \label{subsubsection:l_1:l_2}

Let $K=\{x\in\RR^n:\|x\|_1\le 1\}$ be  the $\ell_1$ ball. Then $w_{K,0}(\varepsilon) \asymp \sqrt{\log(en(\varepsilon^2\wedge 1))} \wedge \varepsilon\sqrt{n}$ as stated in  \citet{bellec_2019_gaussian_widths}. The following lemma states the local entropy and is proven by combining Lemma \ref{lemma:yang:barron} along with the global entropy. 

\begin{lemma} \label{lemma:local:metric:entropy:ell_1} The local metric entropy of the $\ell_1$ ball satisfies  \begin{align*}
   \log \cMloc(\varepsilon)  \asymp \begin{cases}
    \frac{\log (\varepsilon^2 n)}{\varepsilon^2} & \varepsilon \gtrsim 1/\sqrt{n} \\
    n & \varepsilon \lesssim 1/\sqrt{n} \text{ or } \varepsilon\asymp 1/\sqrt{n},
\end{cases}
\end{align*} provided $c^{\ast}$ is taken sufficiently large.
\end{lemma}

We now apply Corollary \ref{cool:corollary} by verifying that $w_{K,0}(\varepsilon)/\varepsilon \lesssim \sqrt{\log \cMloc(\varepsilon)}$ using Lemma \ref{lemma:local:metric:entropy:ell_1}. When $\varepsilon\gtrsim 1/\sqrt{n}$, \begin{align*}
    \frac{w_{K,0}(\varepsilon)}{\varepsilon} &\asymp \sqrt{n}\wedge \frac{\sqrt{\log(en(\varepsilon^2\wedge 1))}}{\varepsilon} \lesssim \sqrt{n}\wedge \frac{\sqrt{\log(n\varepsilon^2)}}{\varepsilon} \\ &\lesssim \frac{\sqrt{\log(n\varepsilon^2)}}{\varepsilon} \asymp \sqrt{\log \cMloc(\varepsilon)}.
\end{align*} When $\varepsilon\lesssim 1/\sqrt{n}$,  \begin{align*}
   \frac{w_{K,0}(\varepsilon)}{\varepsilon} &\asymp \sqrt{n}\wedge \frac{\sqrt{\log(en(\varepsilon^2\wedge 1))}}{\varepsilon}\lesssim \sqrt{n} \\ &\asymp \sqrt{\log \cMloc(\varepsilon)}.
\end{align*} Thus the LSE is minimax optimal for all $\sigma$.

Next, we let $K=\{x\in\RR^n:\|x\|_2\le 1\}=B(0,1)$ be  the $\ell_2$ ball. We consider $\varepsilon< 1$. Observe from \citet[Proposition 7.5.2(vi)]{vershynin2018high} that \begin{align*}
    \frac{w_{K,0}(\varepsilon)}{\varepsilon} &= \frac{w(B(0,\varepsilon)\cap K)}{\varepsilon} = \frac{w(B(0,\varepsilon))}{\varepsilon} \\ &\lesssim \frac{\varepsilon\cdot \sqrt{n}}{\varepsilon}=\sqrt{n}.
\end{align*} Then using scaling properties of the metric entropy of a set along with \citet[Corollary 4.2.13]{vershynin2018high}, \begin{align*}
    \log \cM(\varepsilon/c^{\ast}, B(0,\varepsilon)\cap K) &= \log \cM(\varepsilon/c^{\ast}, B(0, \varepsilon)) \\ &= \log \cM\left(1/c^{\ast}, B(0,1)\right) \\ &\gtrsim n\log c^{\ast}.
\end{align*} Thus, $\frac{w_{K,0}(\varepsilon)}{\varepsilon}\lesssim \sqrt{ \log \cM(\varepsilon/c^{\ast}, B(0,\varepsilon)\cap K)}$. Hence 
 by Corollary \ref{cool:corollary} (using the symmetry of $K$) the LSE is minimax optimal for all $\sigma$.

\subsection{Examples with suboptimal LSE}
\label{subsection:example:suboptimal:LSE}

Here we attach several examples to illustrate the suboptimality of the LSE. Examples of this nature have previously appeared in \citet{chatterjee2014new, zhang2013nearly}. Below, when we say that the LSE is suboptimal, we mean that there exist values of $\sigma$ for which the LSE does not achieve the minimax optimal rate. It is easy to see (Proposition \ref{proposition:extreme:values:sigma:optimal}) that the LSE is always optimal for either very small or large values of $\sigma$.

\begin{proposition} \label{proposition:extreme:values:sigma:optimal} Let $r$ be the largest radius of a ball fully embedded in $K$. Then the LSE is minimax optimal if either $\sigma\lesssim r/\sqrt{n}$ or $\sigma\gtrsim d$.
\end{proposition}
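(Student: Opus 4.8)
The plan is to handle the two regimes separately, in each case directly verifying the sufficient condition for LSE optimality, namely $\varepsilon_K \lesssim \varepsilon^{\ast}$. For the small-noise regime, I would combine Remark \ref{remark:upper:bound:wK} (which gives $\varepsilon_K \lesssim \sqrt{\sigma}\sqrt{w(K)}$) with the Dvoretzky--Milman type lower bound on the minimax rate from Lemma \ref{generic:bound:minimax:rate}, which asserts $\varepsilon^{\ast 2} \gtrsim \min(w(K)^2/n, \sigma^2 w(K)^2/d^2)$. The key observation is that since $K$ contains a ball of radius $r$, we have $w(K) \gtrsim r\sqrt{n}$ (the Gaussian width of a radius-$r$ ball is $\asymp r \sqrt n$, and Gaussian width is monotone under set inclusion), and also $d \asymp \diam(K)$ is bounded by $w(K)$ up to constants (indeed $d \lesssim w(K)$ always, e.g.\ via \citet[Proposition 7.5.2(vi)]{vershynin2018high}). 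When $\sigma \lesssim r/\sqrt{n} \lesssim w(K)/n$, the first term $w(K)^2/n$ in the minimax lower bound dominates or is comparable, so $\varepsilon^{\ast 2} \gtrsim \sigma^2 w(K)^2/d^2 \gtrsim \sigma^2 \cdot n / (d^2/r^2) \cdot (\text{stuff})$; more carefully, one wants to show $\sigma w(K) \lesssim \varepsilon^{\ast 2}$. Since $\varepsilon^{\ast 2} \gtrsim \sigma^2 w(K)^2/d^2$ and we want $\sigma w(K) \lesssim \sigma^2 w(K)^2/d^2$, i.e.\ $d^2 \lesssim \sigma w(K)$, which holds because $d^2 \lesssim w(K)^2 \lesssim w(K) \cdot w(K)$ and... hmm, this needs $d^2/w(K) \lesssim \sigma$, i.e.\ $\sigma \gtrsim d^2/w(K)$. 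Combined with $\sigma \lesssim r/\sqrt n$ this is a squeeze, so the correct route is: use $\varepsilon^{\ast 2} \gtrsim w(K)^2/n$ when $\sigma$ is small enough that the two terms in Lemma \ref{generic:bound:minimax:rate} are governed by the first, and then check $\sigma w(K) \lesssim w(K)^2/n$, i.e.\ $\sigma \lesssim w(K)/n$. Since $w(K) \gtrsim r\sqrt n$, the hypothesis $\sigma \lesssim r/\sqrt n \lesssim w(K)/n$ delivers exactly this. One must also confirm we are in the regime $w(K)^2/n \le \sigma^2 w(K)^2/d^2$ is \emph{not} needed — rather Lemma \ref{generic:bound:minimax:rate} gives the min of the two, so $\varepsilon^{\ast 2} \gtrsim \min(w(K)^2/n, \sigma^2 w(K)^2/d^2)$; when $\sigma \lesssim d/\sqrt n$ the first term is the smaller, giving $\varepsilon^{\ast 2} \gtrsim \sigma^2 w(K)^2/d^2$, and we then need $\sigma w(K) \lesssim \sigma^2 w(K)^2/d^2$, i.e.\ $d^2 \lesssim \sigma w(K)$. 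This is the delicate inequality and I expect it to be the main obstacle; it should follow from $d \lesssim r \sqrt{?}$... Actually the cleanest fix: also use the trivial bound $\varepsilon_K \le d$, so it suffices to show $d \lesssim \varepsilon^{\ast}$, and since $\varepsilon^{\ast} \gtrsim \sigma \wedge d$ by Lemma \ref{minimax:bound:used:prop2.2}, the only issue is when $\sigma < d$. So I need $d \lesssim \varepsilon^{\ast}$ OR $\varepsilon_K \lesssim \varepsilon^{\ast}$ via the width bound; I will pick whichever of the two bounds $\varepsilon_K \le d$ or $\varepsilon_K \lesssim \sqrt{\sigma w(K)}$ is smaller and show it is $\lesssim \varepsilon^{\ast}$ using Lemma \ref{generic:bound:minimax:rate}.

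For the large-noise regime $\sigma \gtrsim d$, this is immediate: $\varepsilon_K \le d$ always, and by Lemma \ref{minimax:bound:used:prop2.2} we have $\varepsilon^{\ast} \gtrsim \sigma \wedge d \asymp d$ since $\sigma \gtrsim d$. Hence $\varepsilon_K \le d \lesssim \varepsilon^{\ast}$, so the LSE is minimax optimal. This half requires no real work beyond invoking those two facts.

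Returning to the small-noise half, here is the argument I would actually write. Suppose $\sigma \lesssim r/\sqrt n$. Since $B(x_0, r) \subseteq K$ for some $x_0$, monotonicity of Gaussian width gives $w(K) \ge w(B(x_0,r)) = r \cdot w(B(0,1)) \asymp r\sqrt n$, so $r\sqrt n \lesssim w(K)$. In particular $\sigma \lesssim r/\sqrt n \lesssim w(K)/n$. Now by Lemma \ref{generic:bound:minimax:rate}, $\varepsilon^{\ast 2} \gtrsim \min(w(K)^2/n,\ \sigma^2 w(K)^2/d^2)$. By Remark \ref{remark:upper:bound:wK}, $\varepsilon_K^2 \lesssim \sigma w(K)$. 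I claim $\sigma w(K) \lesssim \varepsilon^{\ast 2}$. Indeed, since $d \lesssim w(K)$ (as $d = \diam K \le \diam(\text{any set}) \lesssim w(\cdot)$ by \citet[Proposition 7.5.2(vi)]{vershynin2018high}), if $\sigma^2 w(K)^2/d^2 \le w(K)^2/n$ then $\varepsilon^{\ast 2} \gtrsim \sigma^2 w(K)^2/d^2 \ge \sigma^2 w(K)^2/w(K)^2 = \sigma^2$; but we also need to compare to $\sigma w(K)$... so in this sub-case I instead bound $\varepsilon_K \le d$ and show $d \lesssim \varepsilon^{\ast}$: here $\sigma^2 w(K)^2/d^2 \le w(K)^2/n$ forces $\sigma \le d/\sqrt n \le d$, and $\varepsilon^{\ast 2} \gtrsim \sigma^2 w(K)^2/d^2 \ge \sigma^2 n$ (using $w(K) \gtrsim \sqrt n \cdot (w(K)/\sqrt n)$ — need $w(K)/d \gtrsim \sqrt n \cdot$ something)... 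The genuinely clean statement: $w(K) \ge r\sqrt n$ and $d \le 2\,\mathrm{(circumradius)}$, but there's no a priori bound relating $d$ and $r$. So the honest approach must be: show $\varepsilon_K^2 \lesssim \sigma w(K) \lesssim \max(\text{terms}) \lesssim \varepsilon^{\ast 2}$ by checking $\sigma w(K) \lesssim w(K)^2/n$ (equivalent to $\sigma \lesssim w(K)/n$, which holds) \emph{and} $\sigma w(K) \lesssim \sigma^2 w(K)^2/d^2$ (equivalent to $d^2 \lesssim \sigma w(K)$). The second may fail, so I will instead split on whether $d^2 \le \sigma w(K)$: if yes, then $\varepsilon_K^2 \le d^2 \le \sigma w(K)$ and we reduce to the first inequality; if no, use $\varepsilon_K^2 \le d^2$ directly and show $d^2 \lesssim \varepsilon^{\ast 2}$, where now $d^2 > \sigma w(K) \ge \sigma \cdot r\sqrt n \gtrsim \sigma \cdot n \sigma = n\sigma^2$ gives $\sigma < d/\sqrt n$, hence $\varepsilon^{\ast 2} \gtrsim \min(w(K)^2/n, \sigma^2 w(K)^2/d^2)$ and... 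I expect this bookkeeping to be the main obstacle, and I would resolve it by carefully tracking the relation $\sigma \lesssim r/\sqrt n \lesssim w(K)/n$ through both sub-cases, ultimately showing $\min(\varepsilon_K^2, d^2)$ — whichever is the operative upper bound on the LSE risk — is dominated by the Dvoretzky--Milman lower bound on $\varepsilon^{\ast 2}$.
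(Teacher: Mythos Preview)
Your treatment of the large-noise case $\sigma \gtrsim d$ is correct and matches the paper: $\varepsilon_K \le d$ always, while Lemma \ref{minimax:bound:used:prop2.2} gives $\varepsilon^{\ast} \gtrsim \sigma \wedge d \asymp d$.

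The small-noise case, however, has a genuine gap: the pair of tools you rely on --- Remark \ref{remark:upper:bound:wK} for the LSE upper bound and Lemma \ref{generic:bound:minimax:rate} for the minimax lower bound --- are each too loose to close the argument on an eccentric body, and no amount of case-splitting will rescue them. Concretely, take $K = [-r,r]^{n-1} \times [-D,D]$ with $D = nr$, so the inscribed ball has radius $r$, $d \asymp nr$, and $w(K) \asymp r\sqrt{n} + D \asymp nr$. With $\sigma = r/\sqrt{n}$ your Remark \ref{remark:upper:bound:wK} bound gives $\varepsilon_K^2 \lesssim \sigma w(K) \asymp r^2\sqrt{n}$, while the Dvoretzky--Milman bound gives only $\varepsilon^{\ast 2} \gtrsim \min(w(K)^2/n,\ \sigma^2 w(K)^2/d^2) = \min(nr^2,\ r^2/n) = r^2/n$. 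These miss each other by a factor of $n^{3/2}$. Your fallback $\varepsilon_K \le d$ is even worse here, since $d^2 \asymp n^2 r^2$ while the true minimax rate is $\asymp r^2$, so $d^2 \lesssim \varepsilon^{\ast 2}$ is simply false. The ``main obstacle'' you flag is not bookkeeping; the inequalities you are trying to prove are false.

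The paper's route avoids both weak tools. For the LSE upper bound it uses the elementary projection contraction $\|\Pi_K Y - \mu\| \le \|Y - \mu\|$ for $\mu \in K$, which immediately yields $\varepsilon_K^2 \le \EE\|\xi\|^2 = n\sigma^2$. For the minimax lower bound it packs the embedded ball directly: with $\varepsilon = \sqrt{n}\sigma \lesssim r$ and $B(\mu,r) \subset K$, one has $\log M^{\operatorname{loc}}_K(\varepsilon) \ge \log M(\varepsilon/c^{\ast}, B(\mu,\varepsilon)) \gtrsim n = \varepsilon^2/\sigma^2$, whence $\varepsilon^{\ast} \gtrsim \sqrt{n}\sigma$. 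Both bounds land exactly on $n\sigma^2$, and the result follows in two lines.
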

    \begin{proof} First assume $\sigma\lesssim r/\sqrt{n}$. Suppose the ball with maximal radius $r$ is centered at $\mu\in K$.
        Take $\varepsilon = \sqrt{n}\sigma\lesssim r$. Then $\varepsilon^2/\sigma^2 = n$, while \begin{align*}
            \log \cMKloc(\varepsilon) &\gtrsim \log \cMloc(r) \ge \log \cM(r/c^{\ast}, B(\mu, r)\cap K) \\ &= \log \cM(r/c^{\ast}, B(\mu, r))  \gtrsim n,
        \end{align*} following the argument we used in Section \ref{subsubsection:l_1:l_2} for the $\ell_2$ ball. Hence $\varepsilon^2/\sigma^2\lesssim \log \cMKloc(\varepsilon)$, so $\varepsilon^{\ast}\gtrsim \varepsilon = \sqrt{n}\sigma$ by \eqref{varepsilon:star:def}. But $\sqrt{n}\sigma$ is the rate for the trivial estimator that just returns $Y$ (see also the proof of Corollary \ref{corollary:geometric_average:minimax}). The LSE is also always better than the trivial estimator (projections onto convex sets move closer to any point in the set), so $\epsLSE\lesssim \sqrt{n}\sigma$. Thus, the LSE is minimax optimal for $\sigma \lesssim r/\sqrt{n}$.

        Lastly, assume $\sigma\gtrsim d$. By Lemma \ref{minimax:bound:versus:sigma:and:d}, $\varepsilon^{\ast}\gtrsim \sigma\wedge d\asymp d$ but also  $\epsLSE\le d$ as desired.
    \end{proof}

\subsubsection{Pyramid Example}

Assume $\sigma = 1$. Let $v$ be an orthogonal vector to the convex set $K\subset v^\perp$, and assume $w(K) \geq \diam^2(K)\ge 1$, and $\|v\|_2 \geq \diam(K) + c$. Assume further that $K$ is symmetric (i.e., if $k\in K$, $-k\in K$). Consider the set $P=\cup_{\alpha \in [0,1]}[\alpha v + (1-\alpha)K]$. It is simple to see $P$ is a convex set, in fact a pyramid, using the symmetry assumption. 

\begin{lemma} \label{lemma:pyramid:suboptimal} For sufficiently large $c$ and assuming $w(K)\gtrsim \|v\|_2^2$  for a sufficiently large constant, the worst case LSE risk $\epsLSE[P]^2$ satisfies $\epsLSE[P] \gtrsim \|v\|_2$.
\end{lemma}

 On the other hand, there exists a simple estimator that achieves a better rate of convergence. Suppose $Y=p+\xi$ where $p=\alpha v+(1-\alpha)k\in P$ and $\xi\sim \cN(0,\sigma^2\II_n)$. Consider the linear projection $P_v Y$ of $Y$ onto $v$ (i.e., $P_v = v v^T/\|v\|^2$), so that $P_vY = \alpha v+P_v\xi$ using orthogonality of $v$ to $K$. Then
\begin{align*}
    \EE\|P_vY-p\|_2^2 &=\EE \| \alpha v+P_v\xi - (\alpha v + (1-\alpha)k)\|_2^2 \\ &=\EE \| P_v\xi - (1-\alpha)k\|_2^2 \\ &\le \EE \|P_v\xi\|_2^2 + (1-\alpha)^2\diam(K)^2 \\ &\leq 1 + \diam(K)^2 \ll \|v\|^2_2. 
\end{align*} Note that we used $\sup_{k\in K} \|k\|_2^2 \le \diam(K)^2$, which holds since $0 \in \operatorname{relint} K$. 

\subsubsection{Multivariate Isotonic Regression with \texorpdfstring{$\sigma > 1/\sqrt{n}$}{}} \label{section:suboptimal:multivariate}

We return to the multivariate isotonic regression setting, where now we take $\sigma > 1/\sqrt{n}$ and $p> 2$ and show the LSE is suboptimal. Our result relies on analyzing the local Gaussian width at the 0 point (Lemma \ref{lemma:multivariate:isotonic:suboptimal:lower:bound} and Lemma \ref{lemma:multivariate:isotonic:suboptimal:upper:bound} in the appendix) by mimicking the proof of \citet[Proposition 5]{isotonic_general_dimensions}.  The authors consider a more general set than  $Q_{a,b}$, though, with no restrictions on the range.

\begin{lemma} Set $K= Q_{-1/\sqrt{n},1/\sqrt{n}}\subset\RR^n$ and set $p>2$. Then for \begin{align} \label{eq:suboptimal:isotonic:range}
    \sigma \in \left(\max\left\{n^{-\frac{1}{2}}, n^{\frac{2-p}{2p-2}}(\log n)^{\frac{4p}{p-1}} \right\},n^{-\frac{1}{2}+\frac{1}{p}}\right),
\end{align} the LSE rate $\epsLSE^2$ exceeds the minimax rate $\sigma^{2/p}$ .
\end{lemma}
  \begin{proof} Recall from Section \ref{section:multivariate:isotonic:optimal} that the minimax rate is given by $\sigma^{2/p}$ when $\frac{1}{\sqrt{n}} \lesssim \sigma \lesssim 1$. Now assume $\sigma$ satisfies \eqref{eq:suboptimal:isotonic:range}, assuming momentarily that the given interval is a non-empty interval. This clearly satisfies $\frac{1}{\sqrt{n}} \lesssim \sigma \lesssim 1$ condition.
  
    Next, using Lemma \ref{lemma:multivariate:isotonic:suboptimal:lower:bound} and Lemma \ref{lemma:multivariate:isotonic:suboptimal:upper:bound} and taking $c = \frac{1}{\log^4 n}\in(0,1)$, we conclude $w_0(t) - w_0(ct) \gtrsim  n^{1/2 - 1/p} t$ for $t\in(0,1)$. Since $\sigma \le n^{-1/2 + 1/p}$, we have $n^{1/2-1/p}\sigma<1$. Set $t \asymp n^{1/2-1/p}\sigma$ so that \[w_0(t) - w_0(ct) \geq \frac{t^2}{\sigma} \ge \frac{t^2 (1 - c^2)}{\sigma}.\]  Rearranging, \[w_0(t) - \frac{t^2}{\sigma}\geq w_0(ct) - \frac{t^2 c^2}{\sigma},\]
       which by the concavity of $t \mapsto w_0(t) - t^2/\sigma$ implies that  $\epswidth[0](\sigma)\ge ct $ (where $\epswidth[0]$ is defined as in \eqref{equation:varepsilon:mu}). Thus, since  $\sigma>1/\sqrt{n}$, it follows that \[\epswidth[0]\gtrsim ct \asymp cn^{1/2-1/p}\sigma >  \sigma .\] The last inequality follows for sufficiently large $n$ since $n^{1/2-1/p}\gg \log^4 n = c^{-1}$. 

Then, by Lemma \ref{lemma:chatterjee:analogue}, $\EE\|\hat{\mu}-0\|^2\asymp \epswidth[0]^2$, which implies $\epsLSE\gtrsim \epswidth[0]$. Since we showed $\epswidth[0]^2 \gtrsim c^2t^2$, we have $\epsLSE^2\gtrsim c^2 n^{1-2/p}\sigma^2$. This quantity is $\gg\sigma^{2/p}$ whenever $\sigma \gg n^{(2-p)/(2p-2)}(\log n)^{4p/(p-1)}$, as we required. Thus, the LSE is suboptimal for such $\sigma$.

It remains to check that the interval in \eqref{eq:suboptimal:isotonic:range} is non-empty. One can verify that \begin{equation*} 
    n^{\frac{2-p}{2p-2}}(\log n)^{\frac{4p}{p-1}} \ll n^{-\frac{1}{2}+\frac{1}{p}},
\end{equation*} since this is equivalent to $(\log n)^{\beta_1} \ll n^{\beta_2}$ where $\beta_1=\frac{4p}{p-1} >0$ and $\beta_2 = \frac{p-2}{2p(p-1)}>0$ for $p>2$. Moreover, clearly $n^{-1/2}\ll n^{-\frac{1}{2}+\frac{1}{p}}$. Thus we have a non-trivial range for $\sigma$ with a suboptimal LSE.
    \end{proof}

\subsubsection{Solid of Revolution}

Let $f\colon[0,b]\to\RR$ be concave, satisfy $f(0)=f(b)=0$, and satisfy $f(x)=f(b-x)$ for $x\le b/2$ (i.e., symmetric about $b/2$). We assume $b/4\gg f(b/2)$ and that \begin{align}
    f(b/2)>f(b/4)+\tfrac{b}{4\sqrt{2\pi (n-1)}}, \label{eq:solid:revolution:secant}
\end{align} i.e., the secant line between the points at $x=b/4$ and $x=b/2$ has slope at least $\tfrac{1}{8\sqrt{2\pi(n-1)}}$. To define the solid of revolution in $\RR^n$, for any $x\in[0,b]$, define $B_x$ as the the $(n-1)$-dimensional ball with radius $f(x)$  and centered at the origin in $\RR^{n-1}$. Then let $K=\bigcup_x (\{x\}\times B_x)$.

\begin{lemma} \label{lemma:solid:revolution} The worst case LSE risk for the solid of revolution $K$ satisfies $\epsLSE\gtrsim b$.
\end{lemma}

On the other hand, given $y = \mu+\xi$ where $\mu \in K$, consider the estimator $\hat{y}=\langle y,e_1\rangle e_1$ that projects $y$ onto the $x$-axis. Then decomposing $\mu = \langle \mu, e_1\rangle e_1 + (\mu - \langle \mu, e_1\rangle e_1 )$, we have \begin{align*}
   \MoveEqLeft \|\mu - \langle y,e_1\rangle e_1\|_2^2 \\ &= \| (\langle \mu, e_1\rangle- \langle y, e_1\rangle) e_1 \|_2^2 + \|\mu - \langle \mu, e_1\rangle e_1\|_2^2\\
    &\le \| \langle\xi, e_1\rangle e_1 \|_2^2 + [f(b/2)]^2\\ 
    &= \langle \xi, e_1\rangle^2 + [f(b/2)]^2.
\end{align*} The second inequality came from noting that $\mu - \langle \mu, e_1\rangle e_1$ lies in $e_1^{\perp}\cap K$, and any such point is no more than distance $f(b/2)$ from the $x$-axis. So in expectation, we have $\EE \|\mu - \langle y,e_1\rangle e_1\|_2^2  \le 1+ [f(b/2)]^2\ll 1+ b^2/8.$ Thus, $\epsLSE^2\gtrsim b^2$ while  $\EE \|\mu - \hat{y}\|_2^2 \ll 1+b^2$.

\subsubsection{Prelude to Ellipsoids}
\label{some:general:thoughts:prelude:ellipse}

In this section, we derive a lower bound on the LSE rate for a broad class of sets $K$ that are pre-images of smooth functions, which in turn are transformations of Minkowski gauges. Later, we specialize our result to the case of ellipsoids.

Suppose we are given a convex set of the type $K = \{x: G(x) \leq 1\}\subseteq \RR^n$, where $G : \RR^n \mapsto \RR$ is a twice continuously differentiable non-negative convex function with $G(0) = 0$. Suppose that the Hessian of $G$ admits a lower bound on $K$, i.e., for all $x \in K$ we have $H(x) = \frac{\partial^2}{\partial y y\T} G(y) \big\vert_{y = x} \succeq M \succ 0$,  for some symmetric and strictly positive-definite matrix $M$ (so that $G$ is strongly convex). Take any boundary point $x \in \operatorname{bd} K$, i.e., such that $G(x) = 1$. We know that $\nabla G(x)\T x \geq \nabla G(x)\T y$ for all $y \in K$, or in other words, the gradient $\nabla G(x)$ is an outward normal to $K$. However, under our assumptions above, we actually know more about the set $K$. We know that 
\begin{align}
    G(y) &= G(x) + \nabla G(x)\T(y-x) + \frac{(y-x)\T H(\tilde x) (y-x)}{2}\notag \\ &\geq G(x)+ \nabla G(x)\T(y-x) + \frac{(x-y)\T M(x-y)}{2}, \label{taylor:expansion:convex:G}
\end{align}
where $\tilde x = \alpha x + (1-\alpha)y$ for some $\alpha \in [0,1]$. Since $x$ is a boundary point and $y\in K$, $G(y)-G(x)\le 0$, hence \eqref{taylor:expansion:convex:G} implies
\begin{align} \label{eq:strongly:convex:condition}
    \nabla G(x)\T(x-y) \geq (x-y)\T M (x-y)/2
\end{align}
for all $y \in K$. We will also argue that under these assumptions that the set $K$ is compact. It is clear by the continuity of $G$ that $K$ is closed. For boundedness, pick $y\in K$ and observe
\begin{align*}
    1 \geq G(y) \geq G(0) + \nabla G(0)\T(y) + y\T M y/2.
\end{align*}
But, since $0$ is a point of minimum, we have $G(0) = 0$ and $\nabla G(0) = 0$, so that $y^T M y/2 \le G(y)\le 1$. Thus, since $M$ is strictly positive-definite, the set $K$ is bounded. Denote by $d$ the finite diameter of $K$.

Suppose we now select the boundary point with the smallest gradient, i.e., let $x \in \operatorname{bd}K$ be a point such that $\|\nabla G(x)\|_2$ is minimized. This smallest gradient will not be a zero vector since \[0= G(0) \geq G(x) - \nabla G(x)\T x = 1 - \nabla G(x)\T x.\] Applying \eqref{eq:strongly:convex:condition}, there exists some outward normal vector at $x$ given by $x^* =  \frac{\nabla G(x)}{\|\nabla G(x)\|_2}$ such that for all $y\in K$ we have
\begin{align}\label{strong:convexity:separation}
    \langle x^*, x - y \rangle \geq (x-y)\T \tilde M (x-y)/2,
\end{align}
where $\tilde M = M / \|\nabla G(x)\|_2$.

A lower bound on $\epsSupwidth(\sigma)$ for suitable $\sigma$ is given in the following bound using these vectors $x$ and $x^{\ast}$, where recall $\epsSupwidth(\sigma)$ is the supremum over $\mu\in K$ of the term $\epswidth(\sigma)$ defined in \eqref{equation:varepsilon:mu}.

\begin{lemma} \label{lemma:ellipse:prelude} Let $\lambda=(\lambda_1,\dots, \lambda_n)$ be the vector of eigenvalues of $\tilde M/2$. Given the above assumptions,  $\epsSupwidth(\sigma) \gtrsim w^2(K)/\sum (1/\lambda_i)$ for $\sigma \gtrsim d^2/(w(K))$.
\end{lemma}

\subsubsection{A necessary condition for optimality on ellipsoids}
\label{subsubsection:optimality:ellipsoids}

We now specialize the scenario in the previous section to ellipsoids. That is, suppose we are given $K = \{x: \|D x\|^2_2 \leq 1\}$ with $D$ is a diagonal positive-definite matrix, i.e., we take $G(x) = \|D x\|^2_2$ in the notation of Section \ref{some:general:thoughts:prelude:ellipse}. Let the diagonal entries of $D$ be  given by $d_1 \geq d_2 \geq \ldots \geq d_n$. We again derive a lower bound on $\epsSupwidth(\sigma)$ and obtain a necessary condition for optimality of the LSE similar to the sufficient condition in Corollary \ref{cool:corollary}.

\begin{lemma}\label{ellipsoid:lemma}
    Suppose we have the ellipsoid $K = \{x \in \RR^n : \|D x\|_2^2 \leq 1\}$ where $D$ is a positive definite diagonal matrix with entries $d_1 \geq d_2 \geq \ldots \geq d_n$. Define $\delta_{n-k}=1/d_{n-k}$ for any $k\in  \{0\}\cup[n-1]$. Then for each such $k$, we have $\epsSupwidth(\sigma) \gtrsim \delta_{n-k}$ when $\sigma \gtrsim \delta_{n-k}^2/w_0(\delta_{n-k})$ for a sufficiently large absolute constant. Moreover, if the LSE is minimax optimal for $K$ for all $\sigma$, then for all $k \in\{0\}\cup[n-1]$ we must have 
    \begin{align*}
        w_0(\delta_{n-k})\lesssim \delta_{n-k} \sqrt{\log \cMloc(c \delta_{n-k})},
    \end{align*}
    for sufficiently large absolute constant and sufficiently small $c > 0$.
\end{lemma}

Let us now construct some examples of ellipsoids with a suboptimal LSE using this lemma. Take $\delta = 1/d_n \asymp \diam(K)$. By Lemma \ref{ellipsoid:lemma} we know that for values of $\sigma = 1/(d_n^2\sqrt{\sum 1/d_i^2})$ we have that $\epsSupwidth(\sigma) \gtrsim \delta$, where we used \citet[Exercise 5.9]{wainwright2019high} to write $w_0(\delta) = w(K) \asymp \sqrt{\sum 1/d_i^2}$. Note that
\begin{align*}
\delta \gtrsim \sigma = 1\Big\slash\left(d_n^2\sqrt{\sum 1/d_i^2}\right).
\end{align*} Then for some $\nu\in K$, $\epswidth[\nu](\sigma)\gtrsim \delta\gtrsim \sigma$, so by Lemma \ref{lemma:chatterjee:analogue}, \[\epsLSE(\sigma)\ge \EE\|\hat\mu-\nu\|^2\asymp \epswidth[\nu](\sigma)\gtrsim \delta\asymp\diam(K).\] But since $\epsLSE(\sigma)\lesssim\diam(K)$ always holds, this means $\epsLSE(\sigma)\asymp\delta\asymp\diam(K)$. On the other hand, the minimax rate for ellipsoids,  stated in \citet{neykov2022minimax} (where in his notation $a_i = 1/d_i^2$ and $d_0 = \infty$), is given by $(k + 1)\sigma^2 \wedge \diam(K)^2$ if $1/d_{n-k}^2 \leq (k + 1)\sigma^2$ but $1/d_{n-k+ 1}^2 > k \sigma^2$, and is $\diam(K)^2$ if $1/d_n^2 \leq \sigma^2$. 

Let us ensure that we choose $k$ and the $d_i$ so that the minimax rate is given by $(k+1)\sigma^2$ with $k=1$. It is clear that $\sigma < 1/d_n$. Suppose now that $d_1/\sqrt{n}\ll d_{n} \ll d_{n-1}$ and in addition $1/d_{n-1}^2 \le 2\sigma^2$ which is equivalent to $\sum \frac{1}{d_i^2} \le \frac{2d_{n-1}^2}{d_n^4}$. Then $k=1$ satisfies the stated conditions in the minimax rate in the first scenario. Note that $d_1/\sqrt{n}\ll d_n$ implies $2\sigma^2 \ll 1/d_n^2 \asymp\diam(K)^2 \asymp\delta^2$. It follows that the minimax rate would be at most $\frac{2}{d_n^4 \sum 1/d_i^2}= 2\sigma^2 \ll  \delta^2$, while we showed $\delta^2$ is the worst case rate for the LSE. 

One can construct multiple such examples. One example is given in \citet{zhang2013nearly} with $d_i = 1$ for $i < n$ and $d_n = 1/\sqrt[4]{n}$, where the lower bound on the worst-case rate of the LSE is established with a bare hands argument. Then the minimax rate is $2\sigma^2 \asymp 1$ for this example, while the worst-case rate for the LSE is $\epsLSE^2 \asymp \diam(K)^2 \asymp n^{1/2}$. Since here $w(K) \asymp \sqrt{n}$ and $\sigma \asymp 1$, we obtain an example where the bound $\epsLSE \lesssim \sqrt{\sigma w(K)}$ from Remark \ref{remark:upper:bound:wK} is tight. One can also check that $\varepsilon^{\ast}\asymp 1$, which means the bound from Corollary \ref{corollary:geometric_average:minimax} of $\epsLSE \lesssim \sqrt{\sigma} \sqrt{\varepsilon^*} C_n \sqrt[4]{n}$ is tight (without the logarithmic factors from $C_n$), as we mentioned in Remark \ref{remark:geometric_average:minimax}.

We provide another example which does not even require $d_{n-1}\gg d_n$. Consider the Sobolev type ellipsoid with $d_k = (n - k + 1)^{\alpha}$ for $0<\alpha < 1/2$. Then we can calculate $\sigma \asymp \sqrt{n^{-1 + 2\alpha}} \ll 1 \asymp \diam(K)$. The conditions $1/d_{n-k}^2 \leq (k + 1)\sigma^2$ but $1/d_{n-k+ 1}^2 > k \sigma^2$ are equivalent to $(k+1)^{1 + 2\alpha} \geq \sigma^{-2} \geq k^{1 + 2\alpha}$, so we take $k = \lceil n^{(1 - 2\alpha)/(1 + 2\alpha)}\rceil$. Hence the minimax rate is \begin{align*}
    \lceil n^{(1 - 2\alpha)/(1 + 2\alpha)}\rceil n^{-1 + 2\alpha} &\asymp n^{-2\alpha(1 - 2\alpha)/(1 + 2\alpha)} \\ &\ll 1\asymp \epsLSE(\sigma)^2.
\end{align*} This result complements that of \citet{wei2020gauss} who consider the $\alpha>1/2$ case by contrast and demonstrate optimality of the LSE. 

\subsubsection{\texorpdfstring{$\ell_p$}{} balls for \texorpdfstring{$p\in(1,2)$}{}}

In order to show the suboptimality of the LSE for the $\ell_p$ unit ball for a fixed $p \in (1,2)$, we first take a detour and prove a general bound on the LSE rate for a strongly convex body $K$. A body $K$ is called strongly convex if there exists a constant $k$ such that for any $\mu, \nu \in K$ and any $\lambda \in [0,1]$ we have that $B(\lambda \mu + (1- \lambda) \nu, k \lambda(1-\lambda) \|\mu - \nu\|^2) \subset K$. Let $\mu$ and $\nu$ be opposite ends of a diameter of $K$ and pick $\lambda=1/2$. Then, by strong convexity, $B((\mu+\nu)/2, kd^2/4)\subseteq K$ so that $2k d^2/4 < d$. Rearranging $k < 2d^{-1}$.

\begin{lemma} \label{lemma:strongly:convex:body} Let $K$ be a strongly convex body with parameter $k$. Suppose $\sigma \asymp (k\sqrt{n})^{-1}$. Then $\epsLSE(\sigma)\asymp d$. 
\end{lemma}

Returning to $\ell_p$ balls for $p \in (1,2)$, by \citet[Corollary 1]{garber2015faster}, we know that $\ell_p$ unit balls are $k = (p-1)n^{1/2 - 1/p}$ strongly convex. Hence for $\sigma \asymp 1/n^{1- 1/p}$, we have $\epsLSE\asymp d$ from Lemma \ref{lemma:strongly:convex:body}. Now, it is easy to verify that the diameter of the $\ell_1$ and $\ell_2$-balls are both 2, and since for any $p\in(1,2)$ we have that the unit $\ell_1$ ball is a subset of the unit $\ell_p$-ball which is a subset of the $\ell_2$-ball, it follows that $d\asymp 1$.

On the other hand, following Section III.E of \citet{neykov2022minimax} (formally that section  treats convex weak $\ell_p$ balls but since the packing numbers are the same it applies to $\ell_p$ balls as well) we know that the minimax rate satisfies $\varepsilon^{\ast}(\sigma)\asymp\sigma^{1- p/2} (\log n)^{(2-p)/4} \wedge d$ for values of $\sigma$ satisfying $\log (n \sigma^p(\log n)^{p/2}) \asymp \log n$ and $\sigma^{(4-2p)/4} (\log n)^{(2-p)/4} \gtrsim n^{1/2 - 1/p}$. 

Observe that for $\sigma \asymp 1/n^{1- 1/p}$ we have $\sigma^{1- p/2}( \log n)^{(2-p)/4} \ll 1 \asymp d$ which implies $\varepsilon^{\ast}(\sigma) \ll 1$ provided the other two conditions hold. To see this, first note that $\log n \ll n^{\alpha}$ for any $\alpha>0$. Thus, \begin{align*}
   \sigma^{1- p/2}( \log n)^{(2-p)/4} &\asymp n^{1/p-3/2+p/2} ( \log n)^{(2-p)/4} \\ &\ll n^{1/p-3/2+p/2+\alpha/2-\alpha p/4}.
\end{align*} Hence if we take $0<\alpha < \frac{1/p-3/2+p/2}{p/4-1/2}= \frac{2(p-1)}{p}$, then the right-hand side is of the form $n^{\beta}$ for $\beta<0$ and is thus $\ll 1\asymp d$. 

We must check that $\sigma\asymp1/n^{1-1/p}$ satisfies the other two conditions. First 
\begin{align*}
    \log (n \sigma^p(\log n)^{p/2}) = \log (n^{2-p} (\log n)^{p/2}) \asymp \log n.
\end{align*}
Next, using the fact that $(\log n)^{(2-p)/4} \gtrsim 1$ and substituting $\sigma\asymp1/n^{1-1/p}$, we have
\begin{align*}
    \sigma^{(4-2p)/4} (\log n)^{(2-p)/4} \gtrsim n^{1/p-3/2+p/2} \geq n^{1/2 - 1/p},
\end{align*}
where the final inequality follows since $p/2 + 2/p \geq 2$ for any $p>0$ (using the AM-GM inequality). This proves that $\varepsilon^{\ast}(\sigma)\ll 1$ while by the previous lemma, $\epsLSE(\sigma)  \asymp 1$. It follows that for $p\in(1,2)$, the LSE is suboptimal for $\ell_p$ balls for $\sigma\asymp1/n^{1-1/p}$.

\section{Discussion}

We have established numerous necessary or sufficient conditions for minimax optimality of the constrained least squares estimator in the convex Gaussian sequence model setting. Our techniques focused on the local behavior of the Gaussian width and metric entropy of the set. We then provided a series of examples where the LSE is minimax optimal or suboptimal for noise $\sigma$ chosen in an appropriate range. Our examples included isotonic regression in both one and many dimensions, hyper-rectangles, ellipsoids, and $\ell_p$ balls with $p\in[1,2]$. Our results also lead to theoretical algorithms (Appendix \ref{section:algorithm_proofs}) that bound the worst case LSE rate. 

Future work could consider extensions to handle estimation with sub-Gaussian noise, which may clash with our use of the Gaussian width following \citet{chatterjee2014new}. Moreover, our examples with $\ell_p$ balls could be extended for the $p>2$ case. While a formidable task, our examples of LSE-suboptimality underscore the need for a general algorithm to replace the LSE while remaining computationally tractable.

\bibliographystyle{abbrvnat}
\bibliography{lse_ref}

\clearpage

\appendix

\section{Algorithms searching for the worst case rate of the LSE on bounded sets} \label{section:algorithm_proofs}

 The results of Section \ref{subsection:characterizations:conditions:worst:case}, in particular Theorems \ref{big:width:minus:small:width:thm} and \ref{difference:of:local:widths:compared:to:eps:squared:thm}, inspire theoretical algorithms for searching for worst case rate for the LSE for bounded sets $K$. Our Algorithm \ref{algo_local_packing} below is based on local packings while Algorithm \ref{algo_global_packing} instead uses global packings. Since both algorithms make usage of evaluating the map $\nu \mapsto w_{\nu}(\varepsilon)$, before we introduce them we will illustrate how one can evaluate the map $\nu \mapsto w_\nu(\varepsilon)$ on a given closed convex body $K$, where it is assumed that we have a separation oracle for $K$. A separation oracle for $K$ is a function $\cO_K\colon\RR^n\to\RR^n$ such that $\cO_K(\nu) = 0$ if $\nu \in K$ and if $\nu\not\in K$,  $\cO_K(\nu) = a$ where $a \in \RR^n$  is such that $a\T \nu > a\T \mu$ for all points $\mu \in K$. The next lemma states that if one can compute the LSE for $K$, then assuming a separation oracle for $K$ is not a stringent assumption.
\begin{lemma}\label{sep:oracle:with:projection}
    If one can project on $K$, i.e., one can calculate $\Pi_K\nu$ for any $\nu \in \RR^n$, then one has a separation oracle on $K$.
\end{lemma}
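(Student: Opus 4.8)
The plan is to show that projection onto $K$ yields a separation oracle via the standard separating-hyperplane construction for convex sets. Given $\nu \in \RR^n$, first compute $p := \Pi_K \nu$. If $\|\nu - p\| = 0$ then $\nu \in K$ and we output $\mathcal{O}_K(\nu) = 0$. Otherwise, $\nu \notin K$, and I claim the vector $a := \nu - p$ (or its negation depending on the sign convention, see below) is a valid separating direction: the defining property of the Euclidean projection onto a closed convex set is that $\langle \nu - p, \mu - p \rangle \leq 0$ for every $\mu \in K$. This is the classical variational characterization of $\Pi_K$, which follows by differentiating $t \mapsto \|\nu - (p + t(\mu - p))\|^2$ at $t = 0^+$ and using that $p + t(\mu - p) \in K$ by convexity, together with optimality of $p$.

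From $\langle \nu - p, \mu - p \rangle \leq 0$ for all $\mu \in K$, I would rearrange to get $\langle \nu - p, \mu \rangle \leq \langle \nu - p, p \rangle$ for all $\mu \in K$. It remains to check that $\nu$ itself lies strictly on the other side, i.e.\ $\langle \nu - p, \nu \rangle > \langle \nu - p, p \rangle$. But $\langle \nu - p, \nu \rangle - \langle \nu - p, p \rangle = \|\nu - p\|^2 > 0$ since $\nu \notin K$ forces $p \neq \nu$. Hence with $a := \nu - p$ we have $a\T \nu > a\T \mu$ for all $\mu \in K$, which is exactly the required output of $\mathcal{O}_K(\nu)$. (If the paper's convention for the oracle were the opposite inequality one would instead return $p - \nu$; here the stated convention $a\T\nu > a\T\mu$ matches $a = \nu - p$.)

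There is essentially no hard step here — the only thing to be careful about is invoking the correct variational inequality for the projection and making sure the strictness in $\|\nu - p\|^2 > 0$ is justified by $\nu \notin K$. I would phrase the whole argument in two short paragraphs: one recalling the obtuse-angle/variational property of $\Pi_K$, and one assembling the inequality $a\T\nu = a\T p + \|\nu - p\|^2 > a\T p \geq a\T \mu$. No further machinery from the paper is needed, and in particular this makes no use of the convexity hypotheses beyond what is required for $\Pi_K$ to be well-defined and single-valued, which holds for any nonempty closed convex $K$.
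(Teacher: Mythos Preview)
Your proposal is correct and essentially identical to the paper's proof: both define $\mathcal{O}_K(\nu) = \nu - \Pi_K\nu$, invoke the variational inequality $\langle \nu - \Pi_K\nu, \mu - \Pi_K\nu\rangle \le 0$ for all $\mu \in K$, and conclude strict separation from $\|\nu - \Pi_K\nu\|^2 > 0$ when $\nu \notin K$.
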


\begin{proof}
Define $\cO_K(\nu) = \nu - \Pi_K \nu$ for all $\nu\in\RR^n$. Then $\cO_K(\nu)=0$ for all $\nu \in K$. If $\nu\not\in K$, observe that for any  $\mu \in K$ we have $(\nu - \Pi_K \nu)\T (\nu - \mu) > 0$. This follows by \citet[equation (1.19)]{bellec2018sharp} since \begin{align*}
    (\nu - \Pi_K \nu)(\mu - \nu) + \|\nu - \Pi_K \nu\|^2 &= (\nu - \Pi_K \nu)\T (\mu - \Pi_K(\nu)) \\ &\leq 0,
\end{align*} and also that $\|\nu - \Pi_K \nu\| > 0$ since $\nu\not\in K$.
\end{proof}

The idea for calculating the width $w_\nu(\varepsilon)$ is simple, and we summarize it in Algorithm \ref{algo_gaussian_width}. We will sample $N$ points $\xi_i \sim \cN(0,\II_n)$, $i \in [N]$. Assume for a moment we can calculate the values of $\sup_{\eta \in B(\nu,\varepsilon)\cap K} \langle \xi_i, \eta - \nu \rangle $. We will then average all these values as an estimate of $w_\nu(\varepsilon)$. This strategy works because of the celebrated concentration inequality of Lipschitz functions of Gaussian variables \citep[e.g.,][Theorem 2.26]{wainwright2019high}, noting that the map $x \mapsto \sup_{\eta \in B(\nu,\varepsilon)\cap K} \langle x, \eta -\nu\rangle $ is $\varepsilon$-Lipschitz \citep[see][Example 2.30]{wainwright2019high}. The concentration result states that $\langle \xi_i, \eta^*_i -\nu\rangle$ (where $\eta^*_i$ denotes the maximum of $\sup_{\eta \in B(\nu,\varepsilon)\cap K} \langle \xi_i, \eta  -\nu \rangle$) is a sub-Gaussian random variable with variance proxy $\varepsilon^2$, hence satisfies
\begin{align*}
    \PP\bigg(\big|N^{-1}\sum_{i \in [N]} \langle \xi_i, \eta^*_i  -\nu \rangle - w_\nu(\varepsilon)\big| \geq t\bigg) \leq 2 \exp\left(-\frac{Nt^2}{2\varepsilon^2}\right).
\end{align*}
Thus if one wants to approximate $ w_\nu(\varepsilon)$ to precision $t$ with probability at least $1 - \delta$, then one needs at most $N = 2\varepsilon^2\log(2/\delta)/t^2$ observations.

To solve $\sup_{\eta \in B(\nu,\varepsilon)\cap K} \langle \xi_i, \eta - \nu \rangle $ for a fixed $\xi$ given a separation oracle for $K$, we use the ellipsoid algorithm \citep[Chapter 3]{grotschel_ellipsoid}. At each iteration, we must either find a separation oracle for $B(\nu, \varepsilon)$ (which is trivial by Lemma \ref{sep:oracle:with:projection}), or a separation oracle for $K$. If the point $\eta$ happens to be in the set $B(\nu,\varepsilon) \cap K$ then one simply needs to take the gradient of $-\langle\xi, \eta -\nu \rangle$ with respect to $\eta$ which is also trivial. 

\begin{algorithm}
\SetKwComment{Comment}{/* }{ */}
\caption{Local Gaussian Width Algorithm}\label{algo_gaussian_width}
\KwInput{$K$ a compact convex set in $\RR^n$, a separation oracle $\cO_K$ for $K$, a point $\nu\in K$, $\varepsilon>0$, desired precision $t$ and probability $1-\delta$.}
Let $\cE$ be a subroutine that can solve $\arg\sup_{\eta\in B(\nu, \varepsilon)\cap K}\langle \xi,\eta-\nu\rangle$ given $\cO_K$\;
Set $N = \lceil 2\varepsilon^2 \log(2/\delta)/t^2\rceil$\;
Draw $\xi_1,\dots,\xi_N \sim \cN(0,\II_n)$ \;
Compute using $\eta_i^{\ast}=\arg\sup_{\eta\in B(\nu, \varepsilon)\cap K}\langle \xi_i,\eta - \nu\rangle$ for $i=1,\dots,N$ using $\cE$\;
Set $\widehat{w_{\nu}}(\varepsilon) = N^{-1}\sum_{i\in[N]}\langle \xi_i, \eta_i^{\ast} - \nu\rangle$\;
\Return{$\widehat{w_{\nu}}(\varepsilon)$}  
\end{algorithm}


\subsection{A local packing algorithm}

\begin{algorithm}[t]
\SetKwComment{Comment}{/* }{ */}
\caption{Local Packing Algorithm}\label{algo_local_packing}
\KwInput{Compact convex set $K$, diameter $d$,  constant $c^{\ast}$ from definition of local metric entropy}
\SetKwFunction{ChildrenDistance}{ChildrenDistance}
  \SetKwProg{Fn}{Function}{:}{}
  \Fn{\ChildrenDistance{$\nu\in K$, $k\in\NN$}}{
  $\delta\gets\frac{d}{2^{k-1}c^{\ast}}$\;
    Form a maximal $\delta$-packing set $\{\nu_1^k,\nu_2^k, \dots,\nu_M^k\}$ of the set $B(\nu,\delta c^{\ast})\cap K$ where $M=\cM(\delta, B(\nu,\delta c^{\ast})\cap K)$\; 
    Solve $\nu_i^{k\ast} = \argmin_i w_{\nu_i^k}(\delta)$\;
    $\Psi \gets w_{\nu}(\delta c^{\ast}) - w_{\nu_i^{k\ast}}(\delta)$\; 
    $T\gets C(\delta c^{\ast})^2/(2\sigma$)\;
    \KwRet{$\Psi - T$}\;
  }
  Initialize $\nu^{\ast}\in K$\;
  Set $\phi(\nu^{\ast})=1$ \tcc*{Track the level of any node $\nu$ in the tree}
  Let $Q=(\nu^{\ast})$ be an ordered queue\;
\While{$Q$ is not empty}{
    Remove the point $\nu$ at the end of the $Q$\;
    Compute $\gamma= $ \ChildrenDistance{$\nu$, $\phi(\nu)$} and the packing set therein\;
    Define $\beta=\dfrac{d}{2^{\phi(\nu)-1}c^*}$\;
    \uIf{$\gamma> 0$}{
    \Return{$\beta$}  
    }
    \Else{
    For each child node $\nu_i$ in the packing set, set $\phi(\nu_i)=\phi(\nu)+1$\;
    Add each $\nu_i$ to the start of $Q$\;
    }
}
\end{algorithm}


Suppose $K$ is a compact convex set with diameter $d$. Take $c^{\ast}>4$ in the definition of the local metric entropy, and set $C = 4 - \frac{1}{{c^{\ast}}^2}$. We now consider partitioning $K$ in the following way. Begin by fixing an arbitrary point $\nu^* \in K$ (its location is inconsequential). Maximally pack the set $B(\nu^*, d)\cap K = K$ at a distance $d/c^{\ast}$. Next, for each point  $\nu_i^*$ in that packing set, consider maximally packing the set $B(\nu_i^*, d/2) \cap K$ at a distance $d/(2c^{\ast})$. Continue the process infinitely, obtaining an infinite tree of packing sets. At the $k$th level, the number of descendants of a point from the $(k-1)$th level is bounded by $\cMloc(d/2^{k-1})$. 

Following this packing set construction, we proceed with our algorithm starting with level $k=1$. Algorithm \ref{algo_local_packing} considers the difference between $w(B(\nu^*, d/2^{k-1})\cap K)$ and the smallest possible point from the children of $\nu^*$, i.e., $\min_{i} w(B(\nu_i^*, d/(2^{k-1}c^{\ast})) \cap K)$. If that difference is bigger than $C d^2/(2\sigma)$ then the algorithm stops and outputs $d/c^{\ast}$. If not, it proceeds to look at all children of $\nu^*$ in the same way but replacing $d$ with $d/2$. For example, if we take $\nu_i^*$ the algorithm looks at the difference between $w(B(\nu^*_i, d/2^{2-1})\cap K)$ and the smallest possible difference from its children. If the algorithm finds a point whose corresponding difference exceeds $C (d/2)^2/(2\sigma)$, the algorithm terminates and outputs $d/(2c^{\ast})$. Else, it continues to look at the children of children, and so on, in a breadth-first search manner.

The following theorem (which uses Lemma \ref{lemma:local:packing:algorithm} in the Appendix) states our resulting bounds on $\epsLSE(\sigma)$ depending on how many iterations it takes for the algorithm to terminate.

\begin{theorem} \label{theorem:local:packing:algorithm} Define $c' =\frac{(2c^{\ast}-4)(4c^{\ast}-1/c^{\ast})}{(c^{\ast}-4)c^{\ast}}$ for some $c^{\ast}>4$. \renewcommand\labelenumi{(\theenumi)} 
\renewcommand{\theenumi}{\roman{enumi}}
\begin{enumerate}
    \item  Suppose Algorithm \ref{algo_local_packing} terminates after $k$ iterations. Then $\epsLSE(\sigma) \gtrsim d/(2^{k-1}c^{\ast})$.
    \item Suppose Algorithm \ref{algo_local_packing} does not terminate within $k$ iterations.
        \begin{enumerate}
            \item  If $\sigma \ge c'\cdot \frac{d}{2^k}$, then $\epsLSE(\sigma)\asymp \sigma\wedge d$.
            \item  If $\sigma \le c'\cdot \frac{d}{2^k}$, then  $\epsLSE(\sigma) \lesssim c'\cdot \frac{d}{2^k}$.
        \end{enumerate}
\end{enumerate}
\end{theorem}

\subsection{A global packing algorithm}

In this subsection, we present a global packing algorithm based on Theorem \ref{difference:of:local:widths:compared:to:eps:squared:thm}. We use the same absolute constants $L,c^{\ast},$ and $C$ from the theorem. For a fixed $\sigma>0$, Algorithm \ref{algo_global_packing} (if it does not immediately terminate) produces an $\varepsilon$ that will either match $\epsLSE$ or upper bound it. The algorithm accomplishes this by repeatedly forming $\delta$-packing sets of $K$ with $\delta=\left(\varepsilon^3/(4c^{\ast}\sup_{\eta \in K} w_\eta(\varepsilon/c^{\ast})\sigma) \right)\wedge \varepsilon$, and doubling $\varepsilon$ until a certain condition is met.

\begin{algorithm}[t]
\SetKwComment{Comment}{/* }{ */}
\caption{Global Packing Algorithm}\label{algo_global_packing}
\KwInput{$K$ a convex set, $\sigma>0$, absolute constants $c^{\ast},L>0$, and $C=8+8/c^{\ast}$.}
Initialize $\varepsilon \gets 2 \underline\varepsilon^{\ast}$ where $\underline\varepsilon^{\ast}$ is defined in \eqref{underline:varepsilon:def}\;
$\delta\gets\varepsilon^3/(4c^{\ast}\sup_{\eta \in K} w_\eta(\varepsilon/c^{\ast})\sigma) \wedge \varepsilon$\;
Form a maximal $\delta$-packing set $\{\nu_1, \nu_2,\ldots, \nu_M\}$ of $K$, where $M = \cM(\delta,K)$\;
$\Psi\gets \max_{i \in [M]} \sup_{\nu' \in B(\nu_i, 2\varepsilon-\delta)\cap K} w_{\nu'}(\varepsilon/c^{\ast}) - w_{\nu_i}(\varepsilon/c^{\ast})$\;
$T\gets  C \varepsilon^2/(2\sigma) -( L/c^{\ast})\cdot\varepsilon \sqrt{\log \cMKloc(\varepsilon)}$\;
\While{$\Psi\ge T$}{
 $\varepsilon\gets 2\varepsilon$\;
 Update $\delta$\; 
Update maximal $\delta$-packing set $\{\nu_1,\dots,\nu_M\}$ of $K$ with $M=\cM(\delta, K)$\;
 Update $\Psi$ and $T$\;
}
\Return{$\varepsilon$}  
\end{algorithm}


In computing $\Psi$ from the algorithm, we must solve the problem $\sup_{\nu' \in K\cap B(\nu_i, 2\varepsilon-\delta)} w_{\nu'}(\varepsilon/c^{\ast})$ for each $i$. This is a maximization of a concave function over a convex set and in principle is computationally tractable.

Our main result is stated in Theorem \ref{theorem:global:packing:algorithm}, which relates the output $\varepsilon$ of Algorithm \ref{algo_global_packing}  to $\epsLSE(\sigma)$. The proof uses Lemma \ref{lemma:global:algorithm} to compare $\varepsilon$ to $\epsSupwidth(\sigma)$, and that lemma in turn relies on Lemma \ref{lemma:technical:global:algorithm} to  bound $\varepsilon$ depending on whether $\Psi < T$ or $\Psi \ge T$ (the algorithm's stopping conditions).

\begin{theorem} \label{theorem:global:packing:algorithm} Suppose Algorithm \ref{algo_global_packing} does not terminate on initialization, and let $\varepsilon$ be the output of the algorithm. Then the following hold:
\renewcommand\labelenumi{(\theenumi)} 
\renewcommand{\theenumi}{\roman{enumi}}
\begin{enumerate}
    \item If $d \lesssim \sigma$, then $\epsLSE(\sigma)\asymp d \lesssim \varepsilon$. 
    \item  If $d \gtrsim \sigma$ and $\sigma\ge \varepsilon$, then $\varepsilon\asymp\epsLSE(\sigma)\asymp \sigma\wedge d$. 
    \item If $d\gtrsim \sigma$ and $\sigma\le \varepsilon$, we have $\varepsilon\asymp\epsLSE(\sigma)$. 
\end{enumerate}
If Algorithm \ref{algo_global_packing} does terminate on initialization, $\varepsilon \asymp \epsLSE(\sigma)$. 
\end{theorem}

\section{Proofs for Section \ref{section:LSE:introduction}}

\begin{lemma}[Generic Lower Bound on the Minimax Rate]\label{generic:bound:minimax:rate}
    For a closed convex body $K$ with diameter $d$, we have that $\varepsilon^{*2} \gtrsim \min(w(K)^2/n, \sigma^2\cdot w(K)^2/d^2)$. This bound is sharp for an Euclidean ball.
\end{lemma}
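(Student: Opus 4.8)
\emph{Proof plan.}
I would work directly from the definition \eqref{varepsilon:star:def} of $\varepsilon^{*}$: it suffices to exhibit a single scale $\varepsilon_{0}$ with
$\varepsilon_{0}^{2}\gtrsim\min\bigl(w(K)^{2}/n,\ \sigma^{2}w(K)^{2}/d^{2}\bigr)$
for which $\varepsilon_{0}^{2}/\sigma^{2}\le\log M^{\operatorname{loc}}(\varepsilon_{0})$, since then $\varepsilon_{0}$ belongs to the set in \eqref{varepsilon:star:def} and hence $\varepsilon^{*}\ge\varepsilon_{0}$. The sharpness claim is a separate, short computation for the ball.

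Two elementary geometric facts drive everything. First, centering $K$ at any one of its points and using $\diam(K)=d$ gives $w(K)=\EE\sup_{t\in K}\langle x,t\rangle\le d\,\EE\|x\|\le d\sqrt{n}$, so $\min\bigl(w(K)^{2}/n,\sigma^{2}w(K)^{2}/d^{2}\bigr)\le\min(d^{2},\sigma^{2}n)$; this is what will keep $\varepsilon_{0}$ below $d$. Second, for any $\theta\in K$ and $\varepsilon\le d$, convexity gives $\theta+\tfrac{\varepsilon}{d}(K-\theta)\subseteq K$, and since this scaled copy has diameter $\varepsilon$ it lies in $B(\theta,\varepsilon)$; therefore $B(\theta,\varepsilon)\cap K\supseteq\theta+\tfrac{\varepsilon}{d}(K-\theta)$, and by translation- and scale-equivariance of the Gaussian width, $w\bigl(B(\theta,\varepsilon)\cap K\bigr)\ge\tfrac{\varepsilon}{d}\,w(K)$ for every $\theta\in K$.

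Now take $\varepsilon_{0}=c\min\bigl(w(K)/\sqrt{n},\ \sigma w(K)/d\bigr)$ for a small absolute constant $c$. Then $\varepsilon_{0}\le c\,w(K)/\sqrt n\le c\,d<d$, and $\varepsilon_{0}^{2}/\sigma^{2}\le c^{2}w(K)^{2}/d^{2}$, so it is enough to prove the local‑entropy bound
\begin{equation}\label{eq:loc-entropy-width}
\log M^{\operatorname{loc}}(\varepsilon)\ \gtrsim\ \Bigl(\tfrac{w(K)}{d}\Bigr)^{2}\qquad\text{for every }\varepsilon\le \tfrac{w(K)}{c_{1}\sqrt n},
\end{equation}
as then $\log M^{\operatorname{loc}}(\varepsilon_{0})\ge c^{2}(w(K)/d)^{2}$ once $c$ is chosen small relative to the implied constant. \textbf{Establishing \eqref{eq:loc-entropy-width} is the heart of the argument and the step I expect to be the main obstacle}: the scaled‑copy inclusion alone only gives $\log M^{\operatorname{loc}}(\varepsilon)\ge\log M(d/c^{*},K)$, which can be far below $(w(K)/d)^{2}$ for thin ``spiky'' bodies, so \eqref{eq:loc-entropy-width} is a genuine (scale‑localized) partial converse to Sudakov minoration and has to be argued by cases. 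If $w(K)\ge c_{2}d\sqrt n$, a Sudakov/covering‑number estimate applied to the scaled copy $\theta+\tfrac{\varepsilon}{d}(K-\theta)$ (Gaussian width $\ge\tfrac{\varepsilon}{d}w(K)$, diameter $\varepsilon$) yields $\log M^{\operatorname{loc}}(\varepsilon)\gtrsim n\ge(w(K)/d)^{2}$, because the $\tfrac{\varepsilon}{c^{*}}\sqrt n$ nuisance term in the covering bound is then dominated. Otherwise $K$ is ``effectively low‑dimensional'': with $k=\dim K$, the mean width of $K$ inside $\operatorname{aff}(K)$ being at most $d$ forces $k\ge(w(K)/d)^{2}$, and taking $\theta$ to be the center of the largest inscribed ball of $K$ (relative inradius $r_{\mathrm{in}}$) makes $B(\theta,\varepsilon)\cap K$ contain a full $k$‑dimensional Euclidean ball for $\varepsilon\lesssim r_{\mathrm{in}}$, so $\log M^{\operatorname{loc}}(\varepsilon)\gtrsim k\ge(w(K)/d)^{2}$; when $r_{\mathrm{in}}\gtrsim w(K)/\sqrt n$ this already covers the range in \eqref{eq:loc-entropy-width}, and in the complementary ``thin'' regime one instead controls the error term in the covering estimate by the local Gaussian width of the difference body $K-K$ at scale $d/c^{*}$ (which is $\le\tfrac12 w(K)$ for such bodies), again giving $\log M(d/c^{*},K)\gtrsim(w(K)/d)^{2}$ and hence \eqref{eq:loc-entropy-width} through the scaled copy. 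Combining the cases proves \eqref{eq:loc-entropy-width} and therefore the lower bound.

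For sharpness, take $K=B(0,R)$: then $w(K)=R\,\EE\|x\|\asymp R\sqrt n$ and $d=2R$, so the bound reads $\varepsilon^{*2}\gtrsim\min(R^{2},\sigma^{2}n/4)\asymp\min(R^{2},\sigma^{2}n)$. Conversely, for a ball $M^{\operatorname{loc}}(\varepsilon)\asymp (c^{*}/2)^{n}$ for all $\varepsilon\lesssim R$, so by \eqref{varepsilon:star:def} $\varepsilon^{*}\asymp\min(R,\sigma\sqrt n)$ and $\varepsilon^{*2}\asymp\min(R^{2},\sigma^{2}n)$, matching the lower bound up to absolute constants.
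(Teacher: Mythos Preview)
Your reduction to the local–entropy inequality \eqref{eq:loc-entropy-width} is correct and is exactly the right target, and the sharpness computation for the ball is fine. The gap is in the proof of \eqref{eq:loc-entropy-width} itself.

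In your Case~1 you invoke a ``Sudakov/covering-number estimate'' to get a \emph{lower} bound $\log M^{\operatorname{loc}}(\varepsilon)\gtrsim n$ from the fact that the scaled copy has near-maximal Gaussian width. Sudakov's minoration goes the other way (it upper-bounds entropy by width), and I do not know a standard inequality with the ``nuisance term'' structure you describe that produces the lower bound you need. In your Case~2, the inscribed-ball argument only covers $\varepsilon\le r_{\mathrm{in}}$, and your restriction $\varepsilon\le w(K)/(c_1\sqrt n)$ does not force this: for the regular simplex, for instance, $r_{\mathrm{in}}\asymp 1/\sqrt n$ while $w(K)/\sqrt n\asymp\sqrt{(\log n)/n}$, so there is a genuine gap range. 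Your fallback in the ``thin'' regime---controlling an unspecified error term via ``the local Gaussian width of the difference body $K-K$ at scale $d/c^{*}$''---is not a recognisable inequality, and the asserted conclusion $\log M(d/c^{*},K)\gtrsim (w(K)/d)^{2}$ is essentially what has to be proved.

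The paper's route is to invoke the Dvoretzky--Milman theorem directly: for $m\asymp (w(K)/d)^{2}$ there is an $m$-dimensional section (equivalently, random projection) of $K$ that is $(1\pm\eta)$ a Euclidean ball of radius $\asymp w(K)/\sqrt n$. Packing that ball at scale $\varepsilon/c^{*}$ for $\varepsilon\lesssim w(K)/\sqrt n$ gives $\exp(cm)$ points inside $K$ within a common $\varepsilon$-ball, so $\log M^{\operatorname{loc}}(\varepsilon)\gtrsim m\asymp (w(K)/d)^{2}$, which is exactly your \eqref{eq:loc-entropy-width}. In other words, \eqref{eq:loc-entropy-width} \emph{is} (a weak form of) Dvoretzky--Milman; your case analysis is attempting to reprove it from Sudakov-type inputs, and that is where the argument breaks down. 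Replace your case analysis by a single appeal to Dvoretzky--Milman and the rest of your proof goes through.
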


\begin{proof}[Proof of Lemma \ref{generic:bound:minimax:rate}]

Define the spherical width $w^s(K)$ of $K$ as $\EE_{\xi\sim \cN(\vec{0},\II_n)}\sup_{t\in K} \langle t, \xi/\|\xi\|\rangle$. 
The Dvoretzky-Milman Theorem states that given a convex body $K$, for $m\lesssim \eta^2 w(K)^2/d^2$, a random projection $P$ onto an $m$-dimensional subspace satisfies $(1-\eta)B \subset P K \subseteq (1 + \eta)B$, where $B$ is a ball of radius $w^s(K) \asymp w(K)/\sqrt{n}$ \citep[Lemma 7.5.6, Lemma 7.6.2, Exercise 11.3.9]{vershynin2018high}.  It follows that the minimax rate is always at least $\min(w(K)^2/n, w(K)^2/d^2 \sigma^2)$. This is because one can select a point $(Pv, P^{\perp}v)$ for $v\in K$ such that $Pv$ is the center of a $(1-\eta) B$ (so that when we draw a ball of radius $\varepsilon < w^s(K)$ its projection will be completely in the ball $(1-\eta)B$). Then if $\varepsilon \lesssim \min(w(K)/\sqrt{n}, w(K)/d \sigma)$, we can construct a packing set by just taking points in the sphere $(Pv^i, P^{\perp}v^i)$ so that $\|v^i - v^j\| \geq \|Pv^i - Pv^j\| \geq \varepsilon/c$. These points will be at least exponential in the dimension which is $\exp(c w(K)^2/d^2)$, and hence since $\varepsilon/\sigma \leq \sqrt{w(K)^2/d^2}$ we have that $\varepsilon^{*2} \gtrsim \min(w(K)^2/n, w(K)^2/d^2 \sigma^2)$. 
\end{proof}    

    \begin{proof}[Proof of Lemma \ref{minimax:bound:versus:sigma:and:d}]
   First suppose $d \ge \sigma$. Take $\kappa > \max(2, \sqrt{1/\log 2}).$ Then taking $\varepsilon = \sigma/\kappa \le d/\kappa \le d/2$, we can place a ball of radius $\varepsilon$ centered at a point in $K$ where there exists a diameter of that ball of length $2\varepsilon<d$ contained inside $K$. By picking equispaced points along the diameter of this ball, we conclude that $\log \cMloc(\varepsilon) >\log 2$ (provided $c^{\ast}$ is sufficiently large). So we have $\varepsilon^2/\sigma^2 = 1/\kappa^2 < \log 2 < \log \cMloc(\varepsilon)$. Thus, $\varepsilon^{\ast}\ge \sigma/\kappa$ by its definition as a supremum, so that $\varepsilon^{\ast} \gtrsim \sigma \ge \sigma\wedge d$. Suppose on the other hand that $d<\sigma$. Then fitting a diameter (of some ball) of length $\varepsilon= d/3$ inside $K$, we can ensure  $\log \cMloc(\varepsilon) >\log 2$ while also having \[\frac{\varepsilon^2}{\sigma^2}= \frac{d^2}{9\sigma^2} < \frac{\sigma^2}{9\sigma^2} = \frac{1}{9}  < \log 2.\] This proves $\varepsilon^{\ast}\ge d/3,$ so that $\varepsilon^{\ast}\gtrsim d \ge \sigma\wedge d$. In either case, $\varepsilon^{\ast}\gtrsim \sigma\wedge d$.
    \end{proof}

\begin{proof}[\hypertarget{proof:lemma:equivalent:information:lower:bound}{Proof of Lemma \ref{lemma:equivalent:information:lower:bound}}] 
Define $S(C_1, C_2)= \{\varepsilon>0:\varepsilon^{2}/\sigma^2 \leq C_1\log \cMloc(C_2\varepsilon)\}$. Our goal is to show that $\sup S(C_1,C_2) \asymp \sup S(1,1)$ for all $C_1, C_2>0$. 

First, we prove that $\sup S(1, C_2)\asymp\sup S(1,1)$ for all $C_2>0$. Suppose $C_2<1$. Pick $\varepsilon\in S(1, C_2)$, so that $\varepsilon^2/\sigma^2 \le \log \cMloc(C_2\varepsilon).$ Then multiplying by $C_2^2$ and using $C_2<1$, \[\tfrac{(C_2\varepsilon)^2}{\sigma^2} \le C_2^2  \log \cMloc(C_2\varepsilon) < \log \cMloc(C_2\varepsilon).\] Thus $C_2\varepsilon\in S(1,1)$, so $\sup S(1,1) \ge C_2\varepsilon$. Since this holds for any $\varepsilon\in S(1, C_2)$, we have $\sup S(1,1)\gtrsim \sup S(1,C_2)$. On the other hand, pick $\varepsilon\in S(1,1)$. Since $\cMloc(\varepsilon)$ is non-increasing in $\varepsilon$, we have \[\tfrac{\varepsilon^2}{\sigma^2} \le  \log \cMloc(\varepsilon) \le \log \cMloc(C_2\varepsilon),\] proving that $\varepsilon\in S(1, C_2)$. As this shows $S(1,1)\subseteq S(1, C_2)$, we have $\sup S(1, C_2)\ge \sup S(1,1)$. This proves that  $\sup S(1, C_2)\asymp\sup S(1,1)$ when $C_2<1$.

Suppose instead that $C_2\ge 1$. Then picking any $\varepsilon\in S(1, C_2)$, we have by the non-increasing property of $\cMloc(\varepsilon)$ that \[\tfrac{\varepsilon^2}{\sigma^2} \le \log \cMloc(C_2\varepsilon) \le \log \cMloc(\varepsilon).\] Then $\varepsilon\in S(1,1)$, showing that $S(1, C_2)\subseteq S(1,1)$ and thus $\sup S(1,1)\ge \sup S(1,C_2)$. On the other hand, pick $\varepsilon\in S(1,1)$ so that $\varepsilon^2/\sigma^2 \le \log \cMloc(\varepsilon)$.  Then \[ \tfrac{(\varepsilon/C_2)^2}{\sigma^2} \le \tfrac{\varepsilon^2}{\sigma^2} \le\log \cMloc(\varepsilon)=\log \cMloc(C_2\cdot \varepsilon/C_2).\] Hence $\varepsilon/C_2\in S(1, C_2)$, so that $\sup S(1, C_2)\ge \varepsilon/C_2$.  This holds for all $\varepsilon\in S(1,1)$, so $\sup S(1, C_2)\gtrsim \sup S(1,1)$, proving that $\sup S(1, C_2)\asymp\sup S(1,1)$ when $C_2\ge 1$.

We have thus proved that $\sup S(1,C_2)\asymp \sup S(1,1)$ for all $C_2>0$. It suffices to now prove that $\sup S(C_1, C_2)\asymp \sup S(1, C_2)$ for all $C_1, C_2>0$.

As before, first suppose $C_1<1$. That $S(C_1, C_2)\subseteq S(1, C_2)$ is clear since $C_1 \log \cMloc(C_2\varepsilon) \le \log \cMloc(C_2\varepsilon),$ so we have $\sup  S(1, C_2)\ge \sup S(C_1, C_2)$. On the other hand, if $\varepsilon\in S(1, C_2)$, then by the non-increasing property of $\cMloc$,  \begin{align*}
    \tfrac{(\sqrt{C_1}\varepsilon)^2}{\sigma^2} &= \tfrac{C_1\varepsilon^2}{\sigma^2}\le C_1 \cdot\log \cMloc(C_2\varepsilon) \\ &\le C_1 \log \cMloc(C_2(\sqrt{C_1}\varepsilon)). 
\end{align*} Thus $\sqrt{C_1}\varepsilon \in S(C_1, C_2)$, so $\sup  S(C_1, C_2)\ge \sqrt{C_1}\varepsilon$ for all $\varepsilon\in S(1, C_2)$. This proves $\sup  S(C_1, C_2)\gtrsim \sup S(1, C_2)$, so that  $\sup  S(1, C_2)\asymp \sup S(C_1, C_2)$ when $C_1< 1$.

Finally, if $C_1\ge 1$, we have $S(1, C_2)\subseteq S(C_1, C_2)$ since $\log  \cMloc(C_2\varepsilon) \le C_1 \log  \cMloc(C_2\varepsilon)$ so that $\sup S(C_1, C_2)\ge \sup S(1, C_2)$. On the other hand, for any $\varepsilon\in S(C_1, C_2)$,  we have \[ \tfrac{(\varepsilon/\sqrt{C_1})^2}{\sigma^2} = \tfrac{1}{C_1}\cdot \tfrac{\varepsilon^2}{\sigma^2} \le \log \cMloc(C_2\varepsilon) .\] Hence $\varepsilon/\sqrt{C_1}\in S(1,C_2)$, proving that $\sup S(1, C_2)\ge \varepsilon/\sqrt{C_1}$ for all $\varepsilon\in S(C_1, C_2)$. So $\sup S(1, C_2)\gtrsim \sup S(C_1, C_2)$ and hence  $\sup S(1, C_2)\asymp \sup S(C_1, C_2)$ when $C_1\ge 1$. 

This proves $\sup S(C_1, C_2)\asymp \sup S(1, C_2)$ for all $C_1, C_2>0$ and we already showed $\sup S(1, C_2)\asymp S(1,1)$ for all $C_2>0$. This proves that $\varepsilon^{\dag}= \sup S(C_1, C_2)\asymp \sup S(1,1) = \varepsilon^{\ast}$ for all $C_1, C_2>0$.
\end{proof}

\section{Proofs for Section \ref{main:results:sec}}

\begin{proof}[\hypertarget{proof:lemma:chatterjee:analogue}{Proof of Lemma \ref{lemma:chatterjee:analogue}}] 
    We will need the following restatement of \citet[Theorem 1.1]{chatterjee2014new} (to include an arbitrary $\sigma$). 
    \begin{theorem}[Theorem 1.1 of \citet{chatterjee2014new}]
        For any $x\ge 0$,
        \begin{align}
            \MoveEqLeft \PP(|\|\hat \mu - \mu\| - \epswidth| \geq x \sqrt{\epswidth}) \notag \\ &\leq 3 \exp\left(-\frac{x^4}{32\sigma^2(1 + x/\sqrt{\epswidth})^2}\right).  \label{eq:chatterjee:theorem}
        \end{align}
    \end{theorem}
    We now prove the lemma. Let $C = \int_0^{\infty} 6 x \exp(-x^4/(32 (1 + x)^2)) \mathrm{d}x > 1$.
    Suppose now $\epswidth \geq 4 C\sigma > \sigma$. Then by \eqref{eq:chatterjee:theorem} we have
    \begin{align*}
         \MoveEqLeft \PP(|\|\hat \mu - \mu\| - \epswidth|\geq x \sqrt{\epswidth}) \\ &\leq 3 \exp\left(-\frac{x^4}{32\sigma^2(1 + x/\sqrt{\sigma})^2}\right),
    \end{align*}
    and hence
    \begin{align*}
        \MoveEqLeft\EE (\|\hat \mu - \mu\| - \epswidth)^2/\epswidth \\ &\leq \int_{0}^{\infty} 6 x \exp\left(-\frac{x^4}{32\sigma^2(1 + x/\sqrt{\sigma})^2}\right) \mathrm{d}x \\ &= C \sigma.
    \end{align*} Rearranging and using the fact that $\EE \|\hat\mu - \mu\|\le \sqrt{\EE \|\hat\mu - \mu\|^2}$ by Jensen's inequality, we get
    \begin{align*}
        \EE \|\hat \mu - \mu\|^2 - 2 \sqrt{\EE \|\hat \mu - \mu\|^2} \epswidth + \epswidth^2 - C\sigma\epswidth \leq 0.
    \end{align*}
    Thus $\epswidth - \sqrt{C\sigma\epswidth} \leq \sqrt{\EE \|\hat \mu - \mu\|^2}  \leq \epswidth + \sqrt{C\sigma\epswidth}$.
    Since by assumption $\sqrt{C\sigma\epswidth} \leq \epswidth/2$ we conclude that $ \varepsilon^2_\mu/4 \leq \EE \|\hat \mu - \mu\|^2\leq 9 \varepsilon^2_\mu / 4$.

    On the other hand, suppose $\epswidth < 4 C \sigma$. Then setting $z = x\sqrt{\epswidth}$ we have
    \begin{align*}
         \PP(|\|\hat \mu - \mu\| - \epswidth| \geq z) &\leq 3 \exp\left(-\frac{z^4}{32\sigma^2(\epswidth + z)^2}\right)\\ &\leq 3 \exp\left(-\frac{z^4}{32\sigma^2(4C \sigma + z)^2}\right).
    \end{align*}
    Thus
    \begin{align*}
       \MoveEqLeft \EE (\|\hat \mu - \mu\| - \epswidth)^2 \\ &\leq \int_{0}^{\infty} 6 z \exp\left(-\frac{z^4}{32\sigma^2(4C \sigma + z)^2}\right) \mathrm{d}z \\ &= \sigma^2 \int_0^\infty 6 t \exp\left(-\frac{t^4}{32(4C  + t)^2}\right) \mathrm{d}t \\ &= C' \sigma^2.
    \end{align*}
    Thus we conclude that 
        \begin{align*}
        \EE \|\hat \mu - \mu\|^2 - 2 \sqrt{\EE \|\hat \mu - \mu\|^2} \epswidth + \epswidth^2 - C'\sigma^2 \leq 0.
    \end{align*}
    which implies that $ \sqrt{\EE \|\hat \mu - \mu\|^2} \leq \epswidth + \sqrt{C'}\sigma \leq C'' \sigma$, which completes the proof.
\end{proof}    

    \begin{proof}[\hypertarget{proof:lemma:epsilon_mu:nondecreasing}{Proof of Lemma \ref{lemma:epsilon_mu:nondecreasing}}]
        To prove our first claim, it suffices to show for any $\mu\in K$ that $\sigma\mapsto \epswidth(\sigma)$ is non-decreasing on $[0,\infty)$. We first consider the case where $K$ is not a singleton set and restrict to $\sigma>0$. We must first verify some technicalities about the subgradient. Fix $\mu \in K$ and pick any $\sigma>0$. Let $\partial w_{\mu}(\varepsilon)$ be the subgradient  of the map $\varepsilon \mapsto w_{\mu}(\varepsilon).$ This mapping is proper (i.e., finite everywhere) and concave, so the subgradient will exist in the interior of the domain of $w_{\nu}(\varepsilon)$ which is $(0,\infty)$. 
        
        We now verify that $\varepsilon=\epswidth(\sigma)$ is in the interior of this domain, i.e.,  $\epswidth(\sigma)>0$. To see this, observe that $w_{\mu}(\varepsilon) \gtrsim \varepsilon\wedge d>0$ by \citet[Proposition 7.5.2(vi)]{vershynin2018high}. Then for sufficiently small $\varepsilon>0$, we have \begin{align*}
            \sigma\cdot w_{\mu}(\varepsilon) -\varepsilon^2/2 &\gtrsim \sigma \cdot \varepsilon -\varepsilon^2/2 >0 \\ &= \sigma\cdot w_{\mu}(0) -0^2/2. 
        \end{align*}In other words, $0\not\in \argmax_{\varepsilon}[\sigma\cdot w_{\mu}(\varepsilon) -\varepsilon^2/2]$ so $\epswidth(\sigma)>0$. This ensures $\epswidth(\sigma)$ is in the interior of the domain of $w_\nu(\varepsilon)$, i.e., $(0,\infty)$, and the subgradient exists at this point. Moreover, since $\varepsilon \mapsto w_{\mu}(\varepsilon)$ is concave, the subgradient is indeed monotone non-increasing in $\varepsilon$.

        We now prove $\sigma\mapsto \epswidth(\sigma)$ is non-decreasing on $(0,\infty)$. Observe that $\argmax_{\varepsilon} [\sigma\cdot w_{\mu}(\varepsilon) - \varepsilon^2/2]$ is achieved at $\varepsilon=\epswidth(\sigma)$ which satisfies $\sigma\cdot \partial w_{\mu}(\varepsilon) =\varepsilon$. 
 Suppose $0<\sigma <\sigma'$.  Then we have \begin{align*}
     \epswidth(\sigma') &= \sigma'\cdot\partial w_{\mu}(\epswidth(\sigma')) \ge \sigma'\cdot\partial w_{\mu}(\epswidth(\sigma)) \\ &\ge \sigma\cdot\partial w_{\mu}(\epswidth(\sigma)) = \epswidth(\sigma).
 \end{align*} Thus, the map $\sigma\mapsto \epswidth(\sigma)$ is non-decreasing on $(0,\infty)$ so long as $K$ is not a singleton set.

        Let us now handle the case where $K$ is a singleton set and show the mapping is non-decreasing on $(0,\infty)$. Well, we must have $w_{\mu}(\varepsilon)=0$ for any $\varepsilon\ge 0$. Hence \begin{equation}\label{eq:lemma:nonincreasing}
            \epswidth(\sigma)=\argmax_{\varepsilon}[\sigma w_{\mu}(\varepsilon)-\varepsilon^2/2]=\argmax_{\varepsilon}[-\varepsilon^2/2] =0.
        \end{equation} This holds for any $\sigma$, hence $\epswidth(\sigma)=0\le 0= \epswidth(\sigma')$ for any $0<\sigma<\sigma'$. Our map is thus non-decreasing on $(0,\infty)$ without the non-singleton restriction.
        
        Let us now show the non-decreasing property extends to $[0,\infty)$. We again start with the non-singleton case for $K$. By similar logic to \eqref{eq:lemma:nonincreasing}, we have $\epswidth(0)=0$. Next, for $\sigma'>0$, recall we showed previously that $\epswidth(\sigma')>0$, i.e., lies in the interior of the domain of $w_{\mu}(\varepsilon)$. Hence $\epswidth(\sigma')>\epswidth(0)$. This proves $\sigma\mapsto \epswidth(\sigma)$ is non-decreasing on $[0,\infty)$ provided $K$ is not a singleton.

        Finally, we suppose $K$ is a singleton set and show the mapping is non-decreasing on $[0,\infty)$. We still have $\epswidth(0)=0$, and by our previous argument in \eqref{eq:lemma:nonincreasing}, $\epswidth(\sigma')=0$ for any $\sigma'>0$. Thus, trivially, $\epswidth(0)\le \epswidth(\sigma')$ for any $\sigma'>0$, verifying  $\sigma\mapsto \epswidth(\sigma)$ is non-decreasing on $[0,\infty)$ regardless of the singleton status of $K$. This completes our proof that the map $\sigma\mapsto \epsSupwidth(\sigma)$ is non-decreasing on $[0,\infty)$.


        For our next claim, let $c>1$. If either $\sigma=0$ or $K$ is a singleton set, the desired inequality is trivial. Otherwise, suppose $\sigma>0$ and $K$ is not a singleton set. We have established that for any $\mu$, the subgradient is monotone non-increasing in $\varepsilon$. Using our first order subgradient condition $\sigma\cdot \partial w_{\mu}(\varepsilon) =\varepsilon$ and the fact that $\epswidth(c\sigma) \ge \epswidth(\sigma)$, it follows that 
        \begin{align*}
            \epswidth(c\sigma) &= c \sigma \cdot \partial w_\mu(\epswidth(c\sigma)) \leq c \sigma \cdot \partial w_\mu(\epswidth(\sigma))  \\ &= c \cdot\epswidth(\sigma).
        \end{align*}
        Taking the $\sup$ over $\mu \in K$, we have $\epsSupwidth(c\sigma)\le c\epsSupwidth(\sigma)$, and since we already showed $\sigma\mapsto \epsSupwidth(\sigma)$ is non-decreasing, we also have $\epsSupwidth(\sigma) \le \epsSupwidth(c\sigma)$.

        Suppose instead $0\le c<1$. The $c=0$ case is trivial, so assume $c\in(0,1)$. Using our previous result applied to $c^{-1}>1$, \[\epsSupwidth(\sigma) = \epsSupwidth(c^{-1}\cdot c\sigma)\le c^{-1}\epsSupwidth( c\sigma),\] or equivalently $c\epsSupwidth(\sigma) \le \epsSupwidth(c\sigma)$. We also have $\epsSupwidth(c\sigma)\le \epsSupwidth(\sigma)$ again since $\sigma\mapsto \epsSupwidth(\sigma)$ is non-decreasing. This proves our final claim.
    \end{proof}

\subsection{Proofs for Section \ref{subsection:sufficient:conditions:worst:case}}

\begin{proof}[\hypertarget{proof:proposition:first:sufficient:condition:generalized}{Proof of Proposition \ref{proposition:first:sufficient:condition:generalized}}]
Let $\epsSupwidth[K';K] = \sup_{\mu \in K'} \epswidth$, where $\epswidth = \argmax_{\varepsilon} \sigma w_{K,\mu}(\varepsilon)-\varepsilon^2/2$. For brevity, we set $\overline{\varepsilon} = \overline{\varepsilon_{K';K}}$ . Observe now that for every $\mu \in K'$, we have $\sigma \cdot w_{K,\mu}(2 \overline \varepsilon) - 2 \overline{\varepsilon}^2 < 0$, and thus by  \citet[Proposition 1.3]{chatterjee2014new} we have $2\overline\varepsilon > \epsSupwidth[K';K]$.

We will now relate $\epsSupwidth[K';K]$ to ${\epsLSE[K';K]}$. 

\textsc{Case 1:} Suppose $\epsSupwidth[K';K] \gtrsim \sigma$. Pick $\tilde\mu\in K'$ that maximizes $\EE\|\hat\mu-\tilde\mu\|^2$.

\textsc{Case 1(a):} Suppose $\epswidth[\tilde\mu] \gtrsim\sigma$. Then by Lemma \ref{lemma:chatterjee:analogue} and definition of $\epsSupwidth[K';K]$, we have \begin{align*}
    \epsLSE[K';K]^2 = \EE\|\hat\mu-\tilde\mu\|^2 \asymp \epswidth[\tilde\mu]^2 \lesssim \epsSupwidth[K';K]^2 < 4\overline{\varepsilon}^2
\end{align*} as required.

\textsc{Case 1(b):} Suppose $\epswidth[\tilde\mu] \lesssim\sigma$. Then by Lemma \ref{lemma:chatterjee:analogue} and our assumption for Case 1, \begin{align*}
    \epsLSE[K';K]^2 = \EE\|\hat\mu-\tilde\mu\|^2 \lesssim \sigma^2 \lesssim\epsSupwidth[K';K]^2 < 4\overline{\varepsilon}^2.
\end{align*}

\textsc{Case 2}: Suppose $\epsSupwidth[K';K] \lesssim \sigma$. This means for any $\mu\in K'$, $\epswidth \lesssim\sigma$. Hence $\EE\|\hat\mu-\mu\|^2\lesssim \sigma^2$ by Lemma \ref{lemma:chatterjee:analogue} for all $\mu\in K'$, so that $\epsLSE[K';K]\lesssim\sigma$. We now consider two subcases.

\textsc{Case 2(a):} Suppose $d \ge \frac{2\sigma(n+1)}{n\sqrt{2\pi}} \gtrsim \sigma$, where $d = \diam(K')$. We show this implies $\overline\varepsilon \gtrsim\sigma$ which proves $\epsLSE[K';K] \lesssim \overline\varepsilon$.

Note that $K'$ contains a diameter of length $2\sigma/\kappa$ (for a sufficiently large absolute constant $\kappa$). Thus, by \citet[Proposition 7.5.2(vi)]{vershynin2018high}, \[\sup_{\mu \in K'} w_{K,\mu}( \sigma/\kappa)  \geq \sup_{\mu \in K'} w_{K',\mu}(\sigma/\kappa)  \ge \frac{2\sigma}{\kappa\sqrt{2\pi}} .\] Since for sufficiently large $\kappa$ we have $\sigma/(2\kappa^2) \leq 2\sigma/(\kappa\sqrt{2\pi})$, it follows that $\sup_{\mu\in K'} w_{K,\mu}(\varepsilon)\ge \varepsilon^2/2\sigma$ for $\varepsilon = \sigma/\kappa$. By definition of $\overline{\varepsilon}$ as a supremum of such $\varepsilon$, we have $\overline{\varepsilon} \geq \sigma/\kappa$. This completes the case when $d\gtrsim\sigma$.

\textsc{Case 2(b):} Suppose $d \le \frac{2\sigma(n+1)}{n\sqrt{2\pi}}$. Then by the fact that $K'\subseteq K$ followed by Jung's Theorem and \citet[Proposition 6.5.2(vi)]{vershynin2018high}, for some $\mu\in K'$ we have \begin{align*}
    w_{K,\mu}\left( \sqrt{\tfrac{n}{2(n+1)}} d\right) &\ge w_{K',\mu}\left( \sqrt{\tfrac{n}{2(n+1)}} d\right) = w(K') \\ &\ge \frac{d}{\sqrt{2\pi}}\ge \frac{d}{2\sqrt{2\pi}}.
\end{align*} Notice that $\frac{d}{2\sqrt{2\pi}} \le \frac{n d^2}{2(n+1)}\cdot \frac{1}{2\sigma} $ so we have $w_{K,\mu}(\varepsilon)\ge \varepsilon^2/(2\sigma)$ for $\varepsilon = \sqrt{\frac{nd^2}{2(n+1)}} $. Therefore $\overline\varepsilon \gtrsim d$, and as $\epsLSE[K';K]\lesssim d,$ we have $\epsLSE[K';K]\lesssim \overline\varepsilon$.
\end{proof}

    \begin{proof}[\hypertarget{proof:important:thm}{Proof of Theorem \ref{important:thm}}]
To argue this, we will use the well known technique for the reverse Sudakov minoration estimate \citep[see the proof of Theorem 8.1.13 of][e.g.]{vershynin2018high}. We will spell out the details due to our use of a different dyadic scale (multiplying by $c^{\ast}$ rather than $2$) and use of local packings instead of global ones. 

Fix $\mu\in K$. We start at level $\varepsilon$ and build an $\varepsilon/c^{\ast}$ maximal packing set $\cP_1$ of $B(\mu, \varepsilon) \cap K$. Next for any point $\mu_i \in \cP_1$, we build an $\varepsilon/{c^{\ast}}^2$ packing set of $B(\mu_i, \varepsilon/c^{\ast}) \cap K$. The totality of these packing set points at the second level comprise $\cP_2$, and $|\cP_2|\le \cMloc(\varepsilon) \cMloc(\varepsilon/c^{\ast})$. We then continue this process. Note that on the $k$th level, we have obtained an $\varepsilon/{c^{\ast}}^k$ covering set $\cP_k$ of $B(\mu, \varepsilon) \cap K$. This can be seen by induction. For the first level, this follows since $\cP_1$ is a maximal $\varepsilon/c^{\ast}$ packing of the set. Suppose the covering claim is true for level $k-1$. Since $\cP_{k-1}$ forms a covering, any point $\nu \in B(\mu, \varepsilon) \cap K$ will fall in an $\varepsilon/{c^{\ast}}^{k-1}$ ball centered at some point in $\cP_{k-1}$. To construct $\cP_k$, we must at perform a maximal $\varepsilon/{c^{\ast}}^k$-packing (hence covering) of this ball intersected with $K$, among others. Hence  $\nu$ will be within distance $\varepsilon/{c^{\ast}}^k$ to a point in $\cP_k$. Another inductive argument proves $|\cP_k|\le \prod_{i = 0}^{k-1} \cMloc(\varepsilon/{c^{\ast}}^i)$.

We now perform the chaining argument. Let $k$ be defined as the maximum integer so that $\varepsilon {c^{\ast}}^{-k} \geq \frac{w_{\mu}(\varepsilon)}{2 c^{\ast} \sqrt{n}}$. By \citet[Proposition 7.5.2(vi)]{vershynin2018high} we have \[w_\mu (\varepsilon) \geq \frac{\diam(B(\mu, \varepsilon) \cap K)}{\sqrt{2\pi}} \geq \frac{2\varepsilon \wedge d}{\sqrt{2\pi}},\] where  $d = \operatorname{diam}(K)$. It follows that $
    k \leq \log_{c^{\ast}} \frac{\varepsilon 2 c^{\ast} \sqrt{2 \pi n}}{2\varepsilon \wedge d}$.
We now write
\begin{align*}
   w_\mu (\varepsilon) = \EE \sup_{\nu \in B(\mu, \varepsilon) \cap K}\langle \xi, \nu \rangle = \EE \sup_{\nu \in B(\mu, \varepsilon) \cap K}\langle \xi, \nu - \mu \rangle ,
\end{align*}
and recognize $\langle \xi, \nu - \mu \rangle = \sum_{ j= 1}^k \langle \xi, \nu_j - \nu_{j - 1} \rangle + \langle \xi, \nu - \nu_k\rangle$, where $\nu_0 = \mu$, and $\nu_j$ is the closest point to $\nu$ from the $j$th packing set which we have constructed. Thus it suffices to bound
\begin{align}
    \sum_{ j= 1}^k \EE \sup_{\nu \in B(\mu, \varepsilon) \cap K}\langle \xi, \nu_j - \nu_{j - 1} \rangle + \EE \sup_{\nu \in B(\mu, \varepsilon) \cap K}\langle \xi, \nu - \nu_k\rangle. \label{eq:chaining:decomposition}
\end{align} Consider each term in the summation. The covering property implies \begin{align*}
    \|\nu_j - \nu_{j-1}\| &\leq \|\nu_j - \nu \| + \|\nu - \nu_{j-1}\| \\ &\leq \varepsilon/{c^{\ast}}^{j} + \varepsilon/{c^{\ast}}^{j- 1} \\ &\leq (1+c^{\ast} )\varepsilon/{c^{\ast}}^{j}.
\end{align*} Thus, using a Gaussian maximal inequality,
\begin{align*}
    \EE \sup_{\nu \in B(\mu, \varepsilon) \cap K}\langle \xi, \nu_j - \nu_{j - 1} \rangle & \leq \frac{C \varepsilon}{{c^{\ast}}^{j}} \sqrt{ \log \prod_{l = 0}^{j-1} \cMloc(\varepsilon/{c^{\ast}}^l)}\\
    & \leq \frac{C \varepsilon}{{c^{\ast}}^{j}} \sqrt{j \log \cMloc(\varepsilon/{c^{\ast}}^{j-1})}\\
    & \leq \frac{C \varepsilon}{{c^{\ast}}^{j}} \sqrt{k \log \cMloc(\varepsilon/{c^{\ast}}^{j-1})},
\end{align*}
where we used the fact that $\varepsilon\mapsto \log \cMloc(\varepsilon)$ is non-increasing \citep[Lemma II.8]{neykov2022minimax} and $C>1$ is an absolute constant (depending on $c^{\ast}$, which is itself a sufficiently large fixed constant). 

For the second term in \eqref{eq:chaining:decomposition}, Cauchy-Schwarz and the definition of $k$ imply
\begin{align*}
    \EE \sup_{\nu \in B(\mu, \varepsilon) \cap K}\langle \xi, \nu - \nu_k\rangle \leq \frac{\varepsilon \cdot \EE \|\xi\|}{{c^{\ast}}^k}  \leq \frac{\varepsilon\sqrt{n}}{{c^{\ast}}^k} \leq \frac{w_{\mu}(\varepsilon)}{2}.
\end{align*} Rearranging,
\begin{align*}
    w_\mu (\varepsilon) & \leq 2C \sum_{j = 1}^k \frac{\varepsilon}{c^{\ast}\cdot {c^{\ast}}^{j-1}} \sqrt{k\log \cMloc(\varepsilon/{c^{\ast}}^{j-1})}\\
    &\le 2C k^{3/2} \sup_{\delta \leq \varepsilon}  \frac{\delta}{c^{\ast}} \sqrt{\log \cMloc(\delta)} \\
    & \leq 2 C\bigg({\log_{c^{\ast}} \frac{\varepsilon 2 c^{\ast} \sqrt{2 \pi n}}{2\varepsilon \wedge d}}\bigg)^{3/2} \sup_{\delta \leq \varepsilon}  \frac{\delta}{c^{\ast}} \sqrt{\log \cMloc(\delta)}.
\end{align*}

Note this holds for all $\mu\in K$.
Observe that if $\varepsilon$ is such that the right-hand side above is smaller than $\varepsilon^2/(2\sigma)$  we will have an upper bound on $\epsLSE$ (up to constant factors) by Proposition \ref{proposition:first:sufficient:condition}. For brevity, set $C_n = 4 C\bigg({\log_{c^{\ast}} c^{\ast} \sqrt{2 \pi n}}\bigg)^{3/2}$. Set $\varepsilon = 2C_n\underline{\varepsilon}$, and assume at first that $4C_n\underline{\varepsilon} \leq d$. This implies $2 C\bigg({\log_{c^{\ast}} \frac{\varepsilon 2 c^{\ast} \sqrt{2 \pi n}}{2\varepsilon \wedge d}}\bigg)^{3/2}  \le C_n/2$. To control $w_{\mu}(\varepsilon)$, it thus suffices to show

\begin{align*}
      \frac{C_n}{2}\sup_{\delta \leq \varepsilon} \frac{\delta}{c^{\ast}}\sqrt{\log \cMloc(\delta)} \leq \frac{\varepsilon^2}{2\sigma}.
\end{align*}  
By definition of $\underline\varepsilon$, we know $(2 \underline{\varepsilon})^2/(2\sigma) \geq \frac{1}{2}\sup_{\delta \leq 2\underline{\varepsilon}} \delta/c^{\ast} \sqrt{\log \cMloc(\delta)}$, so  
\begin{align*}
    \frac{\varepsilon^2}{2\sigma}=\frac{(2 C_n \underline{\varepsilon})^2}{2\sigma} \geq  \frac{C_n^2}{2}\sup_{\delta \leq 2\underline{\varepsilon}} \frac{\delta}{c^{\ast}} \sqrt{\log \cMloc(\delta)}.
\end{align*} 
Now
\begin{align*}
    \MoveEqLeft \sup_{\delta \leq 2 C_n \underline{\varepsilon}} \frac{\delta}{c^{\ast}} \sqrt{\log \cMloc(\delta)} \\ & \leq \sup_{\delta \leq 2 \underline{\varepsilon}} \frac{\delta}{c^{\ast}} \sqrt{\log \cMloc(\delta)}  \vee \sup_{2 \underline{\varepsilon} < \delta \leq 2 C_n \underline{\varepsilon}} \frac{\delta}{c^{\ast}} \sqrt{\log \cMloc(\delta)}  \\
    & \leq \sup_{\delta \leq 2 \underline{\varepsilon}} \frac{\delta}{c^{\ast}} \sqrt{\log \cMloc(\delta)} \vee \frac{2C_n \underline{\varepsilon}}{c^{\ast}} \sqrt{\log \cMloc(2 \underline{\varepsilon})} \\
    &\leq C_n \sup_{\delta \leq 2 \underline{\varepsilon}} \frac{\delta}{c^{\ast}} \sqrt{\log \cMloc(\delta)},
\end{align*}
where we are using $C_n \geq 1$ and that for convex sets the map $\varepsilon \mapsto \sqrt{\log \cMloc(\varepsilon)}$ is non-increasing \citep[Lemma II.8]{neykov2022minimax}. Hence we showed 
\begin{align*}
   \frac{\varepsilon^2}{2\sigma} \geq  \frac{C_n}{2}\sup_{\delta \leq 2C_n\underline{\varepsilon}} \delta/c^{\ast} \sqrt{\log \cMloc(\delta)}.
\end{align*} 
On the other hand, if $4C_n\underline{\varepsilon} > d$, we already know that $d$ is an upper bound on the rate.
\end{proof}

 \subsection{Proofs for Section \ref{subsection:characterizations:conditions:worst:case}}

    \begin{proof}[\hypertarget{proof:big:width:minus:small:width:thm}{Proof of Theorem \ref{big:width:minus:small:width:thm}}] We first prove that $\overline{\varepsilon}(\sigma)/c \leq \epsSupwidth(\sigma) \leq \overline{\varepsilon}(4c\sigma/(c-1))$, where recall $\epsSupwidth(\sigma)=\sup_{\mu\in K}\epswidth$.
    Pick $\mu\in K$ and $\nu^{\ast}\in B(\mu,\overline{\varepsilon}(\sigma))\cap K$ achieving the supremum and infimum, respectively, in the definition of $\overline{\varepsilon}(\sigma)$. Then
    \begin{align*} \MoveEqLeft w_{\nu^{\ast}}((2+1/c)\bar\varepsilon(\sigma)) - \frac{(2+1/c)^2\overline{\varepsilon}(\sigma)^2}{2\sigma} \\ & \geq w_{\mu}(\overline{\varepsilon}(\sigma))- \frac{(2+1/c)^2\overline{\varepsilon}(\sigma)^2}{2\sigma} \\
        & \geq w_{\nu^{\ast}}(\bar\varepsilon(\sigma)/c) - \frac{(\overline{\varepsilon}(\sigma)/c)^2}{2 \sigma}.
    \end{align*} The first inequality follows since $B(\mu, \bar\varepsilon(\sigma)) \subset B(\nu^{\ast},(2+1/c)\bar\varepsilon(\sigma))$ and the second from the definition of $\bar\varepsilon(\sigma)$.
    Hence by concavity of $\varepsilon\mapsto w_{\nu^{\ast}}(\varepsilon)-\varepsilon^2/(2\sigma)$, we have $\epsSupwidth(\sigma) \geq \epswidth[\nu^{ast}](\sigma)\ge \overline{\varepsilon}(\sigma)/c$.  

Now define $\sigma' := 4c\sigma/(c - 1) > \sigma$. Then for all $\mu \in K$ and all $\kappa > 0$,
    \begin{align*}
        \MoveEqLeft w_{\mu}( (1 + \kappa)\overline \varepsilon(\sigma')) - w_{\mu}( (1 + \kappa)\overline \varepsilon(\sigma')/c)  \\ & \leq \frac{(4 + 4/c) (1 + \kappa)^2 \overline{\varepsilon}(\sigma')^2}{2\sigma'} \\
        & = \frac{(1 - 1/c^2)(1 + \kappa)^2\overline{\varepsilon}(\sigma')^2}{2\sigma}.
    \end{align*}
    Thus 
        \begin{align*}
       \MoveEqLeft w_{\mu}( (1 + \kappa)\overline \varepsilon(\sigma')) - \frac{(1 + \kappa)^2\overline{\varepsilon}(\sigma')^2}{2\sigma}
        \\ & \leq w_{\mu}( (1 + \kappa)\overline \varepsilon(\sigma')/c)-\frac{(1/c^2)(1 + \kappa)^2\overline{\varepsilon}(\sigma')^2}{2\sigma}.
    \end{align*} Again by concavity of $\varepsilon\mapsto w_{\mu}(\varepsilon)-\varepsilon^2/(2\sigma)$, we have $\epswidth(\sigma) \le (1+\kappa)\overline{\varepsilon}(\sigma')$. Since this holds for all $\mu\in K$, 
we have $\epsSupwidth(\sigma) \le (1 + \kappa)\overline \varepsilon(\sigma')$. Taking $\kappa \rightarrow 0$ implies $\epsSupwidth(\sigma) \leq \overline \varepsilon(\sigma')$. Thus, \begin{equation}
    \overline{\varepsilon}(\sigma)/c \leq \epsSupwidth(\sigma) \leq \overline{\varepsilon}(\sigma'). \label{eq:big:width:minus:small:width:thm:intermediate}
\end{equation} Now we proceed to bounding $\epsLSE$.

\textsc{Case 1:} Suppose $\overline{\varepsilon}(\sigma)\gtrsim\sigma$. Then $\epsSupwidth(\sigma)\gtrsim\sigma$ by \eqref{eq:big:width:minus:small:width:thm:intermediate}. Then picking $\mu\in K$ that maximizes $\epswidth(\sigma)$, we have $\epswidth(\sigma)\gtrsim \sigma$, so that by Lemma \ref{lemma:chatterjee:analogue}, $\EE\|\hat\mu-\mu\|^2 \asymp \epswidth(\sigma)^2 = \epsSupwidth(\sigma)^2$. Since $\epsLSE^2 \ge \EE\|\hat\mu-\mu\|^2$, we use \eqref{eq:big:width:minus:small:width:thm:intermediate} to conclude $\epsLSE \gtrsim \epsSupwidth(\sigma)\ge \overline{\varepsilon}(\sigma)/c$. It remains to show for Case 1 that $\epsLSE\lesssim \overline{\varepsilon}(\sigma')$. Well, let $\tilde\mu$ maximize $\EE\|\hat\mu-\tilde\mu\|^2$.

\textsc{Case 1(a):} If $\epswidth[\tilde\mu](\sigma)\gtrsim\sigma$, then by Lemma \ref{lemma:chatterjee:analogue} and \eqref{eq:big:width:minus:small:width:thm:intermediate} , \[\epsLSE^2 = \EE\|\hat\mu-\tilde\mu\|^2\asymp \epswidth[\tilde\mu](\sigma)^2\le \sup_{\mu}\epswidth(\sigma)^2=\epsSupwidth(\sigma)^2 \lesssim \overline{\varepsilon}(\sigma')^2.\] 

\textsc{Case 1(b):} If $\epswidth[\tilde\mu](\sigma)\lesssim\sigma$, then by Lemma \ref{lemma:chatterjee:analogue} and our earlier fact that $\epsSupwidth(\sigma)\gtrsim\sigma$, \[\epsLSE^2 =  \EE\|\hat\mu-\tilde\mu\|^2 \lesssim \sigma^2 \lesssim \epsSupwidth(\sigma)^2 \lesssim \overline{\varepsilon}(\sigma')^2.\] This proves in Case 1 that $\overline{\varepsilon}(\sigma)/c\lesssim\epsLSE\lesssim \overline{\varepsilon}(\sigma').$

\textsc{Case 2:} Now suppose $\overline{\varepsilon}(\sigma')\lesssim \sigma$. Then $\epsSupwidth(\sigma) \lesssim \sigma$ by \eqref{eq:big:width:minus:small:width:thm:intermediate}. Then for all $\mu\in K$, we have $\epswidth(\sigma)\lesssim\sigma$ which in turn implies for all $\mu\in K$ that $\EE\|\hat\mu-\mu\|^2\lesssim \sigma^2$ by Lemma \ref{lemma:chatterjee:analogue}. Thus $\epsLSE\lesssim \sigma$, and that $\epsLSE\lesssim d$ is clear. Hence $\epsLSE\lesssim \sigma\wedge d$. Then by Lemma \ref{minimax:bound:versus:sigma:and:d} and the minimax rate being a lower bound on the worst case LSE rate, we have $\epsLSE\gtrsim \varepsilon^{\ast}\gtrsim \sigma\wedge d$. Thus, $\epsLSE \asymp \sigma\wedge d$ when $\overline{\varepsilon}(\sigma')\lesssim \sigma$.
\end{proof}

    \begin{proof}[\hypertarget{proof:difference:of:local:widths:compared:to:eps:squared:thm}{Proof of Theorem \ref{difference:of:local:widths:compared:to:eps:squared:thm}}]

We abbreviate $\overline{\varepsilon} = \overline{\varepsilon}(\sigma)$ but if we mean $\overline{\varepsilon}(\sigma')$ we always explicitly write out the $\sigma'$.

    \textsc{Case 1:} Suppose $\overline\varepsilon(\sigma)\gtrsim\sigma$. We first upper bound $\epsLSE(\sigma)$. Take $\varepsilon' = (1 + \kappa)\overline{\varepsilon}(\sigma')$ where $\kappa>0$. For any $\mu \in K$, by definition of $\overline{\varepsilon}(\sigma')$,
\begin{align*}
    \MoveEqLeft \sup_{\nu_1 \in B(\mu,\varepsilon') \cap K}w_{\nu_1}(\varepsilon'/c^{\ast}) - \inf_{\nu_2 \in  B(\mu,\varepsilon') \cap K} w_{\nu_2}(\varepsilon'/c^{\ast}) \\ & \leq \frac{C \varepsilon'^2}{2\sigma'} - \frac{L\varepsilon'}{c^{\ast}}  \sqrt{\log \cMloc(\varepsilon')}.
\end{align*}
 By \citet[Exercise 2.4.11]{talagrand2014upper}, \[w_{\mu}(\varepsilon')\le  \sup_{\nu_1 \in B(\mu,\varepsilon') \cap K}  w_{\nu_1}(\varepsilon'/c^{\ast}) + (\tfrac{L}{c^{\ast}} )\varepsilon' \sqrt{\log \cMloc(\varepsilon')}.\] Moreover, clearly \[\inf_{\nu_2 \in  B(\mu,\varepsilon') \cap K}  w_{\nu_2}(\varepsilon'/c^{\ast})\leq w_{\mu}(\varepsilon'/c^{\ast}).\] Combining these three bounds yields
\begin{align*}
   w_{\mu}(\varepsilon') - w_{\mu}(\varepsilon'/c^{\ast})  \leq \frac{C\varepsilon'^2}{2\sigma'}.
\end{align*}
Using the definition of $\sigma'$, this implies
\begin{align*}
    w_{\mu}(\varepsilon') - \frac{\varepsilon'^2}{2\sigma} \leq w_{\mu}(\varepsilon'/c^{\ast}) - \frac{(\varepsilon'/c^{\ast})^2}{2\sigma},
\end{align*}
so that $\varepsilon'\ge \epswidth(\sigma)$ by our concavity argument again. Taking $\kappa\rightarrow 0$ shows that $\overline{\varepsilon}(\sigma')\ge \epswidth(\sigma)$. Since this holds for all $\mu\in K$, we have $\overline\varepsilon(\sigma') \geq \epsSupwidth(\sigma)$ (this fact will also hold for Case 2 by this argument). Note that $\sigma'>\sigma$, and so if $\varepsilon$ satisfies the condition in the definition of $\overline{\varepsilon}(\sigma)$, then it satisfies the condition in the analogous definition of $\overline{\varepsilon}(\sigma')$. Thus, $\overline{\varepsilon}(\sigma')\ge \overline{\varepsilon}(\sigma)$. Since this case assumed $\overline{\varepsilon}(\sigma)\gtrsim\sigma$, we have $\overline{\varepsilon}(\sigma')\gtrsim \sigma$.

Now, taking $\tilde\mu$ to maximize $\EE\|\hat\mu-\tilde\mu\|$, if $\epswidth[\tilde\mu](\sigma) \gtrsim \sigma$, then by Lemma \ref{lemma:chatterjee:analogue} and our upper bound for $\epsSupwidth(\sigma)$,  \begin{align*}
    \epsLSE^2 &=\EE\|\hat\mu-\tilde\mu\|^2\asymp \epswidth[\tilde\mu](\sigma)^2\le \sup_{\mu}\epswidth(\sigma)^2\\ &= \epsSupwidth(\sigma)^2\lesssim\overline{\varepsilon}(\sigma')^2. 
\end{align*}On the other hand, if   $\epswidth[\tilde\mu](\sigma) \lesssim \sigma$, then by Lemma \ref{lemma:chatterjee:analogue} we have \[\epsLSE^2 = \EE\|\hat\mu-\tilde\mu\|^2\le \sigma^2\lesssim \overline{\varepsilon}(\sigma')^2.
\] This completes the proof that if $\overline{\varepsilon}(\sigma)\gtrsim \sigma$, then $\epsLSE \lesssim \overline{\varepsilon}(\sigma')$.
    
    Let us now prove our lower bound $\epsLSE(\sigma) \gtrsim \overline{\varepsilon}(\sigma)$, which we accomplish by comparing $\overline{\varepsilon}(\sigma)$ to $\underline\varepsilon^{\ast}$, defined in \eqref{underline:varepsilon:def}. Recall from Remark \ref{remark:difference:of:local:widths}  that $\overline{\varepsilon}(\sigma) \geq \underline\varepsilon^{\ast}$.
    
    \textsc{Case 1(a):} Suppose $\overline{\varepsilon}(\sigma) \geq 2\underline\varepsilon^{\ast}$. Then using the definition of $\underline\varepsilon^{\ast}$ along with the non-increasing property of $\eta\mapsto\log \cMloc(\eta)$ established in \citet[Lemma II.8]{neykov2022minimax}, we have \begin{align*}
    \frac{C^2 (\overline\varepsilon/2)^2}{4\sigma^2} \ge \left(\tfrac{L}{c^{\ast}}\right)^2 \log \cMloc(\overline\varepsilon/2) \ge  (\tfrac{L}{c^{\ast}})^2 \log \cMloc(\overline\varepsilon). 
\end{align*} Taking square roots and multiplying by $\overline{\varepsilon}$, we obtain \begin{align*}
    \frac{C\overline{\varepsilon}^2}{2\sigma} \ge \frac{2L\overline{\varepsilon}}{c^{\ast}} \sqrt{\log \cMloc(\overline\varepsilon) },
\end{align*} so that \begin{align}\label{difference:of:local:widths:compared:to:eps:squared:th:eq1}
   \frac{C \overline \varepsilon^2}{2\sigma}- \frac{L\overline{\varepsilon}}{c^{\ast}}  \sqrt{\log \cMloc(\overline\varepsilon)} \ge \frac{1}{2}\cdot \frac{C\overline \varepsilon^2}{2\sigma}.
\end{align}

Now pick $\nu_1,\nu_2$ that achieves the supremum in the definition of $\overline\varepsilon$, using the fact that $B(\nu_1, \overline\varepsilon/c^{\ast}) \subseteq B(\nu_2, 2\overline\varepsilon + \overline\varepsilon/c^{\ast})$ to conclude $w_{\nu_2}(2 \overline\varepsilon + \overline\varepsilon/c^{\ast}) \ge w_{\nu_1}(\overline\varepsilon/c^{\ast})$. Using this fact along with our definition of $\overline{\varepsilon}$, \eqref{difference:of:local:widths:compared:to:eps:squared:th:eq1}, and $C$, we obtain
\begin{align*}
    \MoveEqLeft w_{\nu_2}(2 \overline\varepsilon + \overline\varepsilon/c^{\ast}) - \frac{(2 + 1/c^{\ast})^2 \overline\varepsilon^2}{2\sigma} \\ & \geq w_{\nu_1}(\overline\varepsilon / c^{\ast}) - \frac{(2 + 1/c^{\ast})^2 \overline\varepsilon^2}{2\sigma} \\
    & \geq w_{\nu_2}(\overline\varepsilon / c^{\ast}) - \frac{(\overline\varepsilon/c^{\ast})^2}{2\sigma}.
\end{align*}
This implies by concavity of $\varepsilon\mapsto w_{\nu_2}(\varepsilon)- \frac{\varepsilon^2}{2\sigma}$ that $ \epswidth[\nu_2](\sigma)\ge \overline{\varepsilon}(\sigma)/c^{\ast}$. Thus, since $\overline\varepsilon(\sigma) \gtrsim \sigma$, we have $\epswidth[\nu_2](\sigma)\gtrsim\sigma$. By Lemma \ref{lemma:chatterjee:analogue}, $\EE\|\hat\mu - \nu_2\|^2\asymp \epswidth[\nu_2](\sigma)^2\ge \overline{\varepsilon}(\sigma)^2/{c^{\ast}}^2$. Taking the supremum, we have $\epsLSE(\sigma) \gtrsim \overline{\varepsilon}(\sigma)$ as claimed.

\textsc{Case 1(b):} Suppose $\overline\varepsilon(\sigma) \in (\underline\varepsilon^{\ast}, 2\underline\varepsilon^{\ast})$. Then by Lemma \ref{lemma:equivalent:information:lower:bound} applied to $\underline\varepsilon^{\ast}$, we have $\overline\varepsilon \asymp \underline\varepsilon^{\ast}\asymp \varepsilon^{\ast}$, i.e., $\overline\varepsilon$ is the minimax rate up to constants. In this case, $\epsLSE(\sigma) \gtrsim \overline\varepsilon$, since the minimax rate is in particular upper-bounded by the LSE rate $\epsLSE(\sigma)$. 

To summarize Case 1, when $\overline\varepsilon(\sigma)\gtrsim\sigma$, we have $\overline{\varepsilon}(\sigma)\lesssim \epsLSE(\sigma) \lesssim \overline{\varepsilon}(\sigma')$.

\textsc{Case 2:} Suppose $\overline\varepsilon(\sigma') \lesssim\sigma$. Then since $\overline\varepsilon(\sigma') \geq \epsSupwidth(\sigma)$ (proved in Case 1 without using the $\overline\varepsilon(\sigma') \gtrsim\sigma$ assumption) we have $\epsSupwidth(\sigma)\lesssim \sigma$. Well if for all $\mu\in K$ we have $\epswidth\lesssim\sigma$, we have by  Lemma \ref{lemma:chatterjee:analogue} that $\EE\|\hat\mu-\mu\|^2\lesssim\sigma^2$ for all $\mu\in K$. Thus, $\epsLSE(\sigma) \lesssim\sigma$. Clearly $\epsLSE(\sigma)\lesssim d$, so $\epsLSE(\sigma)\lesssim \sigma\wedge d$.

That $\epsLSE(\sigma)\gtrsim \sigma\wedge d$ again follows from Lemma \ref{minimax:bound:versus:sigma:and:d} and the minimax rate being a lower bound on the worst case LSE rate. Thus $\epsLSE(\sigma)\asymp \sigma\wedge d$ when $\overline\varepsilon(\sigma') \lesssim\sigma$.
\end{proof}

    \begin{lemma}\label{lemma:concavity:wnu:epsilon}
        For every $\varepsilon$, the map $\nu \mapsto w_\nu(\varepsilon)$ is concave over $K$.\footnotemark{}
    \end{lemma}
    \begin{proof}
        Let $\nu_1,\nu_2 \in K$ and $\nu_3 = \alpha \nu_1 + (1-\alpha)\nu_2 \in K$.  Then \[B(\nu_3,\varepsilon) \cap K \supset \alpha (B(\nu_1,\varepsilon) \cap K) + (1-\alpha)(B(\nu_2,\varepsilon) \cap K),\] so \citet[Proposition 7.5.2(iv)]{vershynin2018high} implies \begin{align*}
            \MoveEqLeft w(B(\nu_3,\varepsilon) \cap K ) \\ &\geq w(\alpha (B(\nu_1,\varepsilon) \cap K) + (1-\alpha)(B(\nu_2,\varepsilon) \cap K))\\ &= \alpha w_{\nu_1}(\varepsilon)  + (1-\alpha)w_{\nu_2}(\varepsilon).
        \end{align*}
    \end{proof} \footnotetext{What is more, one can easily see that the map $(\varepsilon, \nu) \mapsto w_\nu(\varepsilon)$ is concave. The proof is nearly identical to the proof of the original statement so we omit it. We will however use this result later on.}
    
    \begin{proof}[\hypertarget{proof:Lipschitz:map:theorem}{Proof of Theorem \ref{Lipschitz:map:theorem}}]
    The structure of our proof is as follows: In Part I, we show that $\overline\varepsilon(\sigma) \lesssim \epsLSE(\sigma)$ when $\overline{\varepsilon}(\sigma)\gtrsim \sigma$. In Part II, we show that $\overline{\varepsilon}(\sigma')\gtrsim \epsSupwidth(\sigma)$ (where $\epsSupwidth(\sigma)$ was defined following Lemma \ref{lemma:chatterjee:analogue}). In Part III, we use the result of Part II to show that $\epsLSE(\sigma)\lesssim \overline\varepsilon(\sigma')$ and also that if $\overline\varepsilon(\sigma')\lesssim \sigma$, then ${\epsLSE}(\sigma) \asymp \sigma \wedge d$.

    \textsc{Part I}: We prove that $\overline\varepsilon(\sigma) \lesssim \epsLSE(\sigma)$ when $\overline{\varepsilon}(\sigma)\gtrsim \sigma$. For this direction, we abbreviate $\overline\varepsilon:= \overline\varepsilon(\sigma)$. Define  \[\underline\varepsilon^{\ast}:= \underline\varepsilon^{\ast}(\sigma):=\sup\{\varepsilon>0: \tfrac{C^2 \varepsilon^{2}}{\sigma^2} \leq (\tfrac{L}{c^{\ast}})^2 \log \cMloc(\varepsilon)\}.\] By Lemma \ref{lemma:equivalent:information:lower:bound}, $\underline\varepsilon^{\ast}\asymp \varepsilon^{\ast}$. By near identical reasoning to Remark \ref{remark:difference:of:local:widths}, $\overline\varepsilon\ge \underline\varepsilon^{\ast}$ (argue for $\varepsilon \le \underline\varepsilon^{\ast}$ that $\tfrac{L}{c^{\ast}}\sqrt{\log \cMloc(\varepsilon)}\ge \tfrac{C\varepsilon}{\sigma}$, multiply by $\|\nu_1-\nu_2\|$, then compare $\varepsilon$ to $\overline{\varepsilon}$). Thus we consider the cases where $\overline\varepsilon\in (\underline\varepsilon^{\ast}, 2\underline\varepsilon^{\ast})$ and $\overline\varepsilon\ge 2\underline\varepsilon^{\ast}$.

If $\overline\varepsilon \in (\underline\varepsilon^{\ast}, 2\underline\varepsilon^{\ast})$, then $\overline\varepsilon \asymp \underline\varepsilon^{\ast}\asymp \varepsilon^{\ast}$, i.e., $\overline\varepsilon$ is the minimax rate up to constants. Hence $\epsLSE(\sigma) \gtrsim \overline\varepsilon(\sigma)$ as desired, since the minimax rate is a lower bound on the worst case LSE rate. 

We consider the remaining case where $\overline\varepsilon \geq 2 \underline\varepsilon^{\ast}$. Then  an argument similar to that used in Theorem \ref{difference:of:local:widths:compared:to:eps:squared:thm} to derive \eqref{difference:of:local:widths:compared:to:eps:squared:th:eq1} shows that $C \overline \varepsilon/\sigma  > 2 (L/c^{\ast}) \sqrt{\log \cMloc(\overline\varepsilon)}$, and therefore  \begin{align}  \MoveEqLeft \frac{C \overline \varepsilon\|\nu_1 - \nu_2\|}{\sigma} - \frac{L\|\nu_1 - \nu_2\|}{c^{\ast}} \sqrt{\log \cMloc(\overline\varepsilon)} \notag\\ &>  \frac{C\overline \varepsilon \|\nu_1 - \nu_2\|}{2\sigma}.\label{Lipschitz:map:theorem:equation}\end{align}
    Next suppose that we can find $\nu_1$ and $\nu_2$ achieving the supremum in the definition of $\overline\varepsilon$. We consider two subcases and in both demonstrate that $\overline\varepsilon\lesssim \epswidth[\nu_2](\sigma)$.
    
    \textsc{Case 1:} Suppose $\|\nu_1 - \nu_2\| > \overline\varepsilon$. Then we can find a point $\nu_3 = \alpha \nu_1 + (1-\alpha) \nu_2$ such that $\|\nu_3 - \nu_2\| = \alpha \|\nu_1 - \nu_2\|  = \overline\varepsilon$, i.e., by taking $\alpha = \frac{\overline\varepsilon}{\|\nu_1 - \nu_2\|}\in(0,1)$. On the other hand, by the concavity of $\nu \mapsto w_\nu(\varepsilon)$ (Lemma \ref{lemma:concavity:wnu:epsilon}),
    \begin{align}
       w_{\nu_3}(\overline\varepsilon/c^{\ast}) - w_{\nu_2}(\overline\varepsilon/c^{\ast}) & \geq \alpha(w_{\nu_1}(\overline\varepsilon/c^{\ast}) - w_{\nu_2}(\overline\varepsilon/c^{\ast})) \notag \\ &\geq \frac{C \overline\varepsilon^2}{\sigma} - \frac{L\overline\varepsilon }{c^{\ast}} \sqrt{\log \cMloc(\overline\varepsilon)} \notag \\
        &\geq \frac{C \overline\varepsilon^2}{2\sigma}, \label{lipschitz:map:theorem:equation:2}
    \end{align}
    where the second inequality used the definition of $\overline\varepsilon$ and $\alpha$ and the third used \eqref{Lipschitz:map:theorem:equation} (after multiplying by $\overline\varepsilon/\|\nu_1-\nu_2\|$). Thus,
    \begin{align*}
    \MoveEqLeft w_{\nu_2}( \|\nu_3 - \nu_2\| + \overline\varepsilon/c^{\ast}) - \frac{(\|\nu_3 - \nu_2\| + \overline\varepsilon/c^{\ast})^2}{2\sigma} \\ & \geq w_{\nu_3}( \overline\varepsilon/c^{\ast})  - \frac{(\|\nu_3 - \nu_2\| + \overline\varepsilon/c^{\ast})^2}{2\sigma} \\
    &\ge w_{\nu_2}( \overline\varepsilon/c^{\ast}) + \frac{C\overline\varepsilon^2}{2\sigma} -  \frac{(\|\nu_3 - \nu_2\| + \overline\varepsilon/c^{\ast})^2}{2\sigma} \\
    &= w_{\nu_2}( \overline\varepsilon/c^{\ast}) + \frac{C\overline\varepsilon^2}{2\sigma} - \frac{(\overline\varepsilon+ \overline\varepsilon/c^{\ast})^2}{2\sigma} \\
    & =w_{\nu_2}( \overline\varepsilon/c^{\ast}) - \frac{(\overline\varepsilon/c^{\ast})^2}{2\sigma}.
\end{align*} The first inequality used $B(\nu_3, \overline\varepsilon/c^{\ast})\subseteq B(\nu_2, \|\nu_3-\nu_2\|+\overline\varepsilon/c^{\ast})$ which allowed us to compare the Gaussian width terms. The second used \eqref{lipschitz:map:theorem:equation:2} and the third that $\overline\varepsilon = \|\nu_3-\nu_2\|$. The final line holds since $C=1+\frac{2}{c^{\ast}}$. Then by concavity of $\varepsilon\mapsto w_{\nu_2}(\varepsilon)- \varepsilon^2/(2\sigma)$, our resulting inequality shows that $\epsSupwidth(\sigma)\ge \epswidth[\nu_2](\sigma)\ge \overline\varepsilon(\sigma)/c^{\ast}$.
    
    \textsc{Case 2:} Suppose $\|\nu_1 - \nu_2\|\leq \overline\varepsilon$. Then we have
    \begin{align*}
    \MoveEqLeft w_{\nu_2}( \|\nu_1 - \nu_2\| + \tfrac{\overline\varepsilon}{c^{\ast}}) - \frac{(\|\nu_1 - \nu_2\| + \tfrac{\overline\varepsilon}{c^{\ast}})^2}{2\sigma} \\ & \geq w_{\nu_1}(\tfrac{\overline\varepsilon}{c^{\ast}})  - \frac{(\|\nu_1 - \nu_2\| + \tfrac{\overline\varepsilon}{c^{\ast}})^2}{2\sigma} \\
    &\ge w_{\nu_2}( \tfrac{\overline\varepsilon}{c^{\ast}}) +  \frac{C \overline\varepsilon \|\nu_1 - \nu_2\|}{\sigma}   - \frac{L\|\nu_1-\nu_2\|\sqrt{\log \cMloc(\varepsilon)}}{c^{\ast}}   \\ &\qquad -\frac{(\|\nu_1 - \nu_2\| + \tfrac{\overline\varepsilon}{c^{\ast}})^2}{2\sigma} \\
    &\ge  w_{\nu_2}( \tfrac{\overline\varepsilon}{c^{\ast}}) + \frac{C \overline\varepsilon \|\nu_1 - \nu_2\|}{2\sigma} - \frac{(\|\nu_1 - \nu_2\| + \tfrac{\overline\varepsilon}{c^{\ast}})^2}{2\sigma}  \\
    & \geq w_{\nu_2}( \tfrac{\overline\varepsilon}{c^{\ast}}) - \frac{(\tfrac{\overline\varepsilon}{c^{\ast}})^2}{2\sigma}.
\end{align*} 
The first inequality used $B(\nu_1, \tfrac{\overline\varepsilon}{c^{\ast}})\subseteq B(\nu_2, \|\nu_1-\nu_2\|+\tfrac{\overline\varepsilon}{c^{\ast}})$, the second used the definition of $\overline\varepsilon$, and the third our result \eqref{Lipschitz:map:theorem:equation}. For the last inequality, we note that $\|\nu_1 - \nu_2\| \leq \overline\varepsilon$ implies that 
\begin{align*}
    \MoveEqLeft \frac{C \overline\varepsilon \|\nu_1 - \nu_2\|}{2\sigma} - \frac{(\|\nu_1 - \nu_2\| + \overline\varepsilon/c^{\ast})^2}{2\sigma} \\ 
    &= \frac{\overline{\varepsilon} \|\nu_1-\nu_2\|}{\sigma}\cdot \underbrace{\left(\frac{C}{2}- \frac{1}{c^{\ast}}\right)}_{=1/2}- \frac{\|\nu_1-\nu_2\|^2}{2\sigma}  - \frac{(\overline\varepsilon/c^{\ast})^2}{2\sigma}\\
    &= \frac{\overline{\varepsilon} \|\nu_1-\nu_2\|}{2\sigma} - \frac{\|\nu_1-\nu_2\|^2}{2\sigma}  - \frac{(\overline\varepsilon/c^{\ast})^2}{2\sigma}\\
    &\ge \frac{ \overline{\varepsilon}\|\nu_1-\nu_2\|}{2\sigma} - \frac{\overline{\varepsilon}\|\nu_1-\nu_2\|}{2\sigma}  - \frac{(\overline\varepsilon/c^{\ast})^2}{2\sigma}\\
    &=-\frac{(\overline\varepsilon/c^{\ast})^2}{2\sigma}.
\end{align*}

Thus, we have $w_{\nu_2}( \|\nu_1 - \nu_2\| + \overline\varepsilon/c^{\ast}) - \frac{(\|\nu_1 - \nu_2\| + \overline\varepsilon/c^{\ast})^2}{2\sigma}\geq w_{\nu_2}( \overline\varepsilon/c^{\ast}) - \frac{(\overline\varepsilon/c^{\ast})^2}{2\sigma}$ which implies by concavity of $\varepsilon\mapsto w_{\nu_2}(\varepsilon) - \varepsilon^2/(2\sigma)$ that $ \epswidth[\nu_2](\sigma) \ge \overline\varepsilon(\sigma)/c^{\ast}$. This completes Case 2.

Thus, we have shown for both Case 1 and Case 2 of the setting where $\overline{\varepsilon}\ge 2 \underline\varepsilon^{\ast}$ that $ \epswidth[\nu_2](\sigma) \ge \overline\varepsilon/c^{\ast}$. Now if $\overline{\varepsilon}\gtrsim \sigma$ as we assumed, we have $\epswidth[\nu_2](\sigma)\gtrsim\sigma$. Hence by Lemma \ref{lemma:chatterjee:analogue} \[\EE\|\hat{\mu}-\nu_2\|^2 \asymp \epswidth[\nu_2](\sigma)^2 \gtrsim\overline{\varepsilon}(\sigma)^2.\] Taking the sup over $\nu_2$, we conclude $\epsLSE(\sigma)\gtrsim\overline{\varepsilon}(\sigma) $ as desired.

This proves that when $\overline{\varepsilon}(\sigma)\gtrsim \sigma$, then no matter if $\overline{\varepsilon}\in(\underline\varepsilon^{\ast}, 2\underline\varepsilon^{\ast})$ or $\overline{\varepsilon}\ge 2 \underline\varepsilon^{\ast}$ (for which we had Case 1 and Case 2), we have $\epsLSE(\sigma)\gtrsim\overline{\varepsilon}(\sigma)$.

\textsc{Part II.} Next, we prove that $\overline\varepsilon(\sigma')\gtrsim \epsSupwidth(\sigma)$. Take $\varepsilon' = (1 + \kappa)\overline{\varepsilon}(\sigma')$ where $\sigma' = \frac{4C}{1-4/{c^{\ast}}^2}\cdot\sigma$ and $\kappa>0$ is an arbitrary constant. For any $\mu \in K$, consider the set $B(\mu,\varepsilon') \cap K$. Pack the set $B(\mu,\varepsilon') \cap K$ maximally at distance $\varepsilon'/c^{\ast}$. Choose the point $\nu_1$ in the packing that maximizes $w_{\nu_1}(\varepsilon'/c^{\ast})$. By \citet[Exercise 2.4.11]{talagrand2014upper}, we know that \begin{equation} \label{Lipschitz:map:theorem:equation:3}
    w_{\mu}(\varepsilon') \le w_{\nu_1}( \varepsilon'/c^{\ast}) +  \frac{\tilde{L}\varepsilon' }{c^{\ast}}\sqrt{\log \cMloc(\varepsilon')}  
\end{equation} for a sufficiently large absolute constant $\tilde{L}$. 

Similarly, pick $\nu_2$ in the packing to minimize $w_{\nu_2}(\varepsilon'/c^{\ast})$. It should satisfy \begin{equation} \label{Lipschitz:map:theorem:equation:4}w_{\nu_2}(\varepsilon'/c^{\ast})\le  w_{\mu}(2\varepsilon'/c^{\ast} )\end{equation} since if we took $\nu_3$ as the point in the packing set closest to $\mu$, we have $w_{\nu_3}( \varepsilon'/c^{\ast})  \leq w_{\mu}(2\varepsilon'/c^{\ast})$ due to $B(\nu_3,\varepsilon'/c^{\ast}) \subseteq B(\mu, 2\varepsilon'/c^{\ast})$. Thus $\nu_2$ certainly satisfies \eqref{Lipschitz:map:theorem:equation:4}. Observe also that we can always assume that $\|\nu_1 - \nu_2\| \geq \varepsilon'/c^{\ast}$ since if the two coincide, we can take them to be any other two points. That is, unless the packing contains one single point which would imply that $\varepsilon' \gtrsim \operatorname{diam}(K)\gtrsim\epsLSE$, which in turn yields our claim that $\overline{\varepsilon}(\sigma')\gtrsim \epsLSE$ by taking $\kappa\to 0$. To fully resolve the remaining claims in the case when the packing is just a single point, if $\overline{\varepsilon}(\sigma')<\sigma$, this means $\varepsilon'\lesssim\sigma$ so that $\epsLSE\lesssim \sigma$. So $\epsLSE\lesssim \sigma\wedge d$. But also $\epsLSE\gtrsim\varepsilon^{\ast}\gtrsim \sigma\wedge d$ by Lemma \ref{minimax:bound:versus:sigma:and:d}, showing $\epsLSE\asymp \sigma\wedge d$. We now return to the original case where $\|\nu_1 - \nu_2\| \geq \varepsilon'/c^{\ast}$.  Further, since we are packing $B(\mu,\varepsilon')\cap K$, we have $\|\nu_1-\nu_2\|\le 2\varepsilon'$. Additionally, by definition of $\overline\varepsilon(\sigma')$ as a supremum and the fact that $\varepsilon'>\overline\varepsilon(\sigma')$,
\begin{align}\label{Lipschitz:map:theorem:equation:5}
   \MoveEqLeft w_{\nu_1}( \varepsilon'/c^{\ast}) - w_{\nu_2}( \varepsilon'/c^{\ast}) \notag\\ &\leq \frac{C \varepsilon' \|\nu_1 - \nu_2\|}{\sigma'} - \frac{L\|\nu_1 - \nu_2\| \sqrt{\log \cMloc(\varepsilon')}}{c^{\ast}} .
\end{align}

Combining \eqref{Lipschitz:map:theorem:equation:3}, \eqref{Lipschitz:map:theorem:equation:4}, and \eqref{Lipschitz:map:theorem:equation:5} along with $\varepsilon'/c^{\ast} \leq \|\nu_1 - \nu_2\|\leq 2 \varepsilon'$, we conclude that so long as $L\ge c^{\ast}\tilde L$, we have
\begin{align*}
   \MoveEqLeft w_{\mu}( \varepsilon')  - w_{\mu}( 2\varepsilon'/c^{\ast}) \\ &= ( w_{\mu}( \varepsilon') - w_{\nu_1}(\varepsilon'/c^{\ast}))+(w_{\nu_1}(\varepsilon'/c^{\ast})- w_{\nu_2}(\varepsilon'/c^{\ast})) \\ &\quad\quad + \underbrace{(w_{\nu_2}(\varepsilon'/c^{\ast}) -w_{\mu}( 2\varepsilon'/c^{\ast}))}_{\le 0}  \\
   &\le  \frac{\tilde L \varepsilon'}{c^{\ast}}  \sqrt{\log \cMloc(\varepsilon')} + \frac{C \varepsilon' \|\nu_1 - \nu_2\|}{\sigma'}  \\ &\qquad\qquad -  \frac{L \|\nu_1-\nu_2\|}{c^{\ast}}  \sqrt{\log \cMloc(\varepsilon')}  \\
   &= \frac{1}{c^{\ast}}\sqrt{\log \cMloc(\varepsilon')} \cdot\left( \tilde{L}\varepsilon'-L\|\nu_1-\nu_2\|\right)  \\ &\qquad\qquad + \underbrace{\frac{C \varepsilon' \|\nu_1 - \nu_2\|}{\sigma'}}_{\le 2C \varepsilon'^2/\sigma'} \\
   &\le \frac{1}{c^{\ast}}\sqrt{\log \cMloc(\varepsilon')} \cdot\underbrace{( \tilde{L}\varepsilon' -L\varepsilon'/c^{\ast})}_{\le 0} + \frac{2C\varepsilon'^2}{\sigma'} \\
   &\le \frac{2C \varepsilon'^2}{\sigma'} \\ &= \frac{\varepsilon'^2}{2\sigma}  -\frac{(2\varepsilon'/c^{\ast})^2}{2\sigma}.
\end{align*} This implies for our choice of $\sigma'$ that \begin{align*}
    w_{\mu}(\varepsilon') - \frac{\varepsilon'^2}{2\sigma} \le w_{\mu}( 2\varepsilon'/c^{\ast}) - \frac{(2\varepsilon'/c^{\ast})^2}{2\sigma}.  
\end{align*} Using concavity of $\varepsilon\mapsto w_{\mu}(\varepsilon)-\frac{\varepsilon^2}{2\sigma}$ and the fact that $\varepsilon'> 2\varepsilon'/c^{\ast}$ for $c^{\ast}>2$, we conclude $\varepsilon'\gtrsim \epswidth(\sigma)$. Taking $\kappa\to 0$, we have $\overline\varepsilon(\sigma')\gtrsim \epswidth(\sigma)$. Since we took an arbitrary $\mu\in K$, we have $\overline\varepsilon(\sigma')\gtrsim \epsSupwidth(\sigma)$. 

\textsc{Part III:} Finally, we show that $\epsLSE \lesssim \overline\varepsilon(\sigma')$ and if $\overline\varepsilon(\sigma')\lesssim \sigma$, then $\epsLSE\asymp\sigma\wedge d$. 

We start with the first claim. Let $\tilde\mu\in K$ maximize $\EE\|\hat\mu-\tilde\mu\|^2.$ Suppose $\epswidth[\tilde\mu](\sigma)\gtrsim \sigma$. Then using the definition of $\tilde\mu$, Lemma \ref{lemma:chatterjee:analogue}, and the result of Part II, we have \begin{align*}
    \epsLSE^2 = \EE\|\hat\mu-\tilde\mu\|^2 \asymp \epswidth[\tilde\mu](\sigma)^2  \le \epsSupwidth(\sigma)^2 \lesssim \overline{\varepsilon}(\sigma')^2.
\end{align*} On the other hand, if $\epswidth[\tilde\mu](\sigma)\lesssim \sigma,$ then by Lemma \ref{lemma:chatterjee:analogue} \begin{align*}
    \epsLSE^2 = \EE\|\hat\mu-\tilde\mu\|^2\lesssim \sigma^2.
\end{align*} But also $\epsLSE^2\lesssim d^2$ hence $\epsLSE^2\lesssim \sigma^2\wedge d^2$. Now recall from Part I that $\overline{\varepsilon}(\sigma')\ge \underline{\varepsilon}^{\ast}(\sigma')$ (we used the scaling $\sigma$ rather than $\sigma'$ in Part I). Using these facts along with  Lemma \ref{minimax:bound:versus:sigma:and:d} to  relate $\underline{\varepsilon}^*(\sigma')$ (which is the minimax rate up to constants) to $\sigma'\wedge d$, we obtain \[\epsLSE^2 \lesssim \sigma^2\wedge d^2 \asymp {\sigma'}^2\wedge d^2 \lesssim (\underline{\varepsilon}^*(\sigma'))^2 \lesssim \overline{\varepsilon}(\sigma')^2.\]

We now prove the final claim. If $\overline\varepsilon(\sigma')\lesssim \sigma$, we have from Part II that $\epsSupwidth(\sigma)\lesssim \overline\varepsilon(\sigma')\lesssim \sigma$. By Lemma \ref{lemma:chatterjee:analogue} (applied at every $\mu\in K$), this implies $\epsLSE \lesssim \sigma$ and since $\epsLSE\le d$ as previously argued, we conclude $\epsLSE \lesssim \sigma\wedge d$. Then again by Lemma \ref{minimax:bound:versus:sigma:and:d} and the minimax rate being a lower bound on the worst case LSE rate, we have $\epsLSE\gtrsim \varepsilon^{\ast}\gtrsim \sigma\wedge d$. Hence if $\overline\varepsilon(\sigma')\lesssim \sigma$, then $\epsLSE\asymp\sigma\wedge d$. This concludes Part III, which in conjunction with Part I, completes the proof.
\end{proof}

    \begin{proof}[\hypertarget{proof:corollary:Lipschitz}{Proof of Corollary \ref{corollary:Lipschitz}}]
    We first prove two intermediate claims. Let $\overline{\varepsilon}(\sigma)$ be as defined in Theorem \ref{Lipschitz:map:theorem}.
    
\textsc{Claim 1:} $\overline{\varepsilon}(\sigma) \ge \varepsilon^{\ast}$. Proof: if $\varepsilon \le \varepsilon^{\ast}$, then $\sqrt{\log \cMloc(\varepsilon)}\ge \varepsilon/\sigma$ by definition of $\varepsilon^{\ast}$. Assuming $\nu_1,\nu_2$ are chosen without loss of generality so that $w_{\nu_1}(\varepsilon/c^{\ast}) \ge w_{\nu_2}(\varepsilon/c^{\ast})$, we have  \begin{align*}
    \MoveEqLeft w_{\nu_1}(\varepsilon/c^{\ast}) - w_{\nu_2}(\tfrac{\varepsilon}{c^{\ast}}) - \frac{C \varepsilon \|\nu_1 - \nu_2\|}{\sigma} \\ &\qquad\qquad +  \frac{L\|\nu_1 - \nu_2\|}{c^{\ast}} \sqrt{\log \cMloc(\varepsilon)} \\ &\ge  w_{\nu_1}(\varepsilon/c^{\ast}) - w_{\nu_2}(\varepsilon/c^{\ast}) - \frac{C \varepsilon \|\nu_1 - \nu_2\|}{\sigma} \\ &\qquad\qquad  +  \frac{L\varepsilon \|\nu_1 - \nu_2\|}{c^{\ast}\sigma}  \\
    &= w_{\nu_1}(\varepsilon/c^{\ast}) - w_{\nu_2}(\varepsilon/c^{\ast}) + \frac{\overline{C}\varepsilon\|\nu_1-\nu_2\|}{\sigma}   \\
    &\ge 0,
\end{align*} provided $L$ is sufficiently large. This implies the condition in the definition of $\overline{\varepsilon}(\sigma)$ holds, so $\overline{\varepsilon}(\sigma) \ge \varepsilon$. This holds for all $\varepsilon \le \varepsilon^{\ast}$, so we conclude $\overline{\varepsilon}(\sigma) \ge \varepsilon^{\ast}$.

\textsc{Claim 2:} Suppose the map $\mu\mapsto w_{\mu}(\varepsilon/c^{\ast})$ has a Lipschitz constant $\overline{C}\varepsilon/\sigma$ for all $\varepsilon>\varepsilon^{\ast}$ for some constant $\overline{C}$. Then $\overline{\varepsilon}(\sigma)\lesssim \varepsilon^{\ast}$. Proof: If $\varepsilon>\varepsilon^{\ast}$, then $(\varepsilon/\sigma)\|\nu_1-\nu_2\|\ge \|\nu_1-\nu_2\|\sqrt{\log \cMloc(\varepsilon)}$ by definition of $\varepsilon^{\ast}$. Hence for sufficiently small $\overline{C}$, we have \begin{align*}
    \MoveEqLeft |w_{\nu_1}(\varepsilon/c^{\ast})- w_{\nu_2}(\varepsilon/c^{\ast})| \\ &\le \frac{\overline{C}\varepsilon}{\sigma}\|\nu_1-\nu_2\| \\ 
    &\lesssim \frac{\varepsilon}{\sigma}\|\nu_1-\nu_2\| - \frac{L\varepsilon}{c^{\ast}\sigma} \|\nu_1 - \nu_2\| \\ &\le \frac{\varepsilon}{\sigma}\|\nu_1-\nu_2\| - \frac{L}{c^{\ast}} \|\nu_1 - \nu_2\| \sqrt{\log \cMloc(\varepsilon)}. 
\end{align*} Then we have $\overline{\varepsilon}(\sigma)\le \varepsilon$ by definition of $\overline{\varepsilon}(\sigma)$ in Theorem \ref{Lipschitz:map:theorem}, but since this holds for all $\varepsilon >\varepsilon^{\ast}$, this implies $\overline{\varepsilon}(\sigma)\le \varepsilon^{\ast}$.

Using these claims, let us prove our main claim. Suppose the map has a Lipschitz constant $\varepsilon/\sigma$  up to constants for all $\varepsilon\gtrsim\varepsilon^{\ast}$. Now let $\sigma' = \sigma\cdot \tfrac{4C}{1-4/{c^{\ast}}^2}\asymp \sigma$. Let $\varepsilon^{\ast}(\sigma')$ be as defined in \eqref{varepsilon:star:def} but with $\sigma'$ replacing $\sigma$. By Lemma \ref{lemma:equivalent:information:lower:bound}, $\varepsilon^{\ast}\asymp \varepsilon^{\ast}(\sigma')$. Thus, we conclude that the map has a Lipschitz constant $\varepsilon/\sigma'$ (up to constants) for all $\varepsilon\gtrsim \varepsilon^{\ast}(\sigma')$. Then combining Claim 1 and Claim 2, we know $\overline{\varepsilon}(\sigma')\asymp  \varepsilon^{\ast}(\sigma') \asymp\varepsilon^{\ast}$. But by the theorem we have $\epsLSE(\sigma)\lesssim \overline{\varepsilon}(\sigma')\asymp\varepsilon^{\ast}$. Thus the LSE is minimax optimal. 

Conversely, suppose the LSE is minimax optimal. Suppose $\overline{\varepsilon}(\sigma)\gtrsim\sigma$. Then by  Theorem \ref{Lipschitz:map:theorem}, $\overline{\varepsilon}(\sigma)\lesssim \epsLSE(\sigma)\asymp \varepsilon^{\ast}$. Note that $\mu \mapsto w_{\mu}(\varepsilon)$ must be $\overline{\varepsilon}(\sigma) / \sigma$-Lipschitz (up to constants) by definition of $\overline{\varepsilon}$. Then since increasing the Lipschitz constant preserves the Lipschitz property, we know the map is $\varepsilon^{\ast}/\sigma$-Lipschitz. Hence, for any $\varepsilon\gtrsim \varepsilon^{\ast}$, the map is $\varepsilon/\sigma$-Lipschitz. 

Suppose instead $\overline{\varepsilon}(\sigma)\lesssim\sigma$, so that the mapping has a Lipschitz constant $\overline{\varepsilon}(\sigma)/\sigma\lesssim 1$. If we can show $\varepsilon^{\ast}\gtrsim \sigma$, so that $\overline{\varepsilon}(\sigma)/\sigma\lesssim 1 \lesssim \varepsilon^{\ast}/\sigma$, then the mapping will also be Lipschitz with constant $\varepsilon^{\ast}/\sigma$ and hence $\varepsilon/\sigma$-Lipschitz for any $\varepsilon>\varepsilon^{\ast}$. Well, by Claim 1, $\varepsilon^{\ast}\lesssim \overline{\varepsilon}(\sigma)\lesssim \sigma$. But also $\varepsilon^{\ast}\lesssim d$, so $\varepsilon^{\ast}\lesssim \sigma\wedge d$. We also know from Lemma \ref{minimax:bound:versus:sigma:and:d} that $\varepsilon^{\ast}\gtrsim \sigma \wedge d$, so in fact $\varepsilon^{\ast}\asymp \sigma\wedge d$ . Now if $\sigma\lesssim d$, then this means $\varepsilon^{\ast}\asymp \sigma$, so certainly $\varepsilon^{\ast}\gtrsim \sigma$. Suppose $\sigma \gtrsim d$, so that $\varepsilon^{\ast}\asymp d$. Well observe that if $\varepsilon\gtrsim \varepsilon^{\ast}$, i.e., $\varepsilon\gtrsim d$ (with a constant larger than $1$), then \[|w_{\nu_1}(\varepsilon)-w_{\nu_2}(\varepsilon)|= |w(K)-w(K)|=0\le \tfrac{\varepsilon}{\sigma}\|\nu_1-\nu_2\|_2.\] Here we used that $B(\nu_1,d)\cap K = B(\nu_2,d)\cap K = K$. Hence the mapping is $\varepsilon/\sigma$-Lipschitz for all $\varepsilon\gtrsim \varepsilon^{\ast}$ as required.
    \end{proof}

\section{Proofs for Section \ref{section:LSE:examples}}
\subsection{Proofs for Section \ref{subsection:example:optimal:LSE}}

    \begin{proof}[\hypertarget{proof:isotonic:lower:bound}{Proof of Lemma \ref{isotonic:lower:bound}}]
 We will take $a = 0, b = 1$ for simplicity. Suppose $\varepsilon \gtrsim \frac{1}{\sqrt{n}}$. Set $k=\varepsilon\sqrt{n}>1$ and assume for simplicity both $k$ and $n/k = \sqrt{n}/\varepsilon$ are integers. Construct the vector 
\begin{align*}
u &= (\underbrace{0, \ldots, 0}_{k},\underbrace{\tfrac{k}{n}, \ldots, \tfrac{k}{n}}_{k}, \underbrace{\tfrac{2k}{n}, \ldots, \tfrac{2k}{n}}_{k},  \ldots,\\ &\qquad\qquad \underbrace{1-\tfrac{k}{n}, \ldots,  1-\tfrac{k}{n}}_{k})\in\RR^n.
\end{align*} Now consider perturbing this vector by
\begin{align*}
v_\alpha &= (\underbrace{0, \ldots, 0}_{k},\underbrace{\tfrac{\alpha_1 k}{n} , \ldots, \tfrac{\alpha_1 k}{n}}_{k}, \underbrace{\tfrac{\alpha_2 k}{n}, \ldots,\tfrac{\alpha_2 k}{n}}_{k},\ldots,\\ &\qquad\qquad   \underbrace{\tfrac{\alpha_{n/k-2} k}{n}, \ldots, \tfrac{\alpha_{n/k-2} k}{n}}_{k},  \underbrace{0, \ldots, 0}_{k}),
\end{align*}
where the vector $\alpha = (\alpha_1,\alpha_2, \ldots, \alpha_{n/k-2}) \in \{0,1\}^{n/k-2}$, noting that $u+v_{\alpha}$ is monotonic and has components in $[0,1]$. By Varshamov-Gilbert's bound, there exists at least $\exp((n/k-2)/8)$ vectors from $\{0,1\}^{n/k-2}$ such that the Hamming distance between any of them is at least $(n/k-2)/4$. Given $\alpha,\alpha'\in \{0,1\}^{n/k-2}$ with this property, note that
\begin{align*}
\|v_{\alpha} - v_{\alpha'}\|^2 \geq k (\varepsilon/\sqrt{n})^2 (n/k - 2)/8 \gtrsim \varepsilon^2.
\end{align*} 
Therefore, there exist at least $\exp((n/k-2)/8)\asymp\exp(\sqrt{n}/\varepsilon)$ vectors $u + v_{\alpha}$ that form an $\varepsilon$-packing set of $S^{\uparrow}(a,b)$. 
\end{proof}

    \begin{proof}[\hypertarget{proof:multivariable_isotone_upper}{Proof of Lemma \ref{multivariable_isotone_upper}}]
   We present the $p> 2$ case only, with the $p=2$ case identical except we swap the choice of upper bound from \citet[Theorem 1.1]{gao_Wellner_monotone}. Let $\|\cdot\|_{L^2(\mu)}$ denote the $L^2$-norm of a function on $\RR^p$ with respect to the Lebesgue measure in $p$-dimensions. 
   
   We begin with $a=0$ and $b=1$. Let $\varepsilon>0$. Take $\delta =\frac{\varepsilon}{ 2\sqrt{n}}$. By Lemma \ref{gao_Wellner_theorem}, there exists a $\delta$-covering $\cG_p$ of $\cF_p$ with $|\cG_p| \le C\delta^{-2(p-1)}$.

   Now, for each $\mu \in Q_{0,1}\subseteq [0,1]^n$, we construct a corresponding $f^{\mu}\in \cF_p$. We must partition the domain of $f^{\mu}$, i.e., $[0,1]^p$.  Let
   \begin{align*}
       I^{-} &= \{x=(x_1,x_2,\dots,x_p):x\in[0,1]^p, \\ &\qquad \exists i\in \{1,\dots,p\} \text{ such that }x_i= 0 \}.
   \end{align*} The $n$ unique elements $l_1,\dots,l_n$ of $\LL_{p,n}$ define $n$ cubes that partition $(0,1]^p$ as follows. Given $l_j = \left(\frac{k_1^j}{n^{1/p}},\dots, \frac{k_p^j}{n^{1/p}}  \right)$ for $k_1^j,\dots,k_{p}^j\in \{1,\dots, n^{1/p}\}$, define the corresponding cube $I_j= \prod_{i=1}^p \left(\frac{k_i^j - 1}{n^{1/p}}, \frac{k_i^j}{n^{1/p}}\right]$ that has $l_j$ as its outermost corner. Then $[0,1]^p=I^-\cup \bigcup_{j=1}^n I_j$ is a disjoint partition. Thus, we define for all $x\in[0,1]^p$ the function \[f^{\mu}(x) =\begin{cases}
        \mu_j &\text{ if }x \in I_j \\
        0 &\text{ if } x\in I^-.
   \end{cases}\] Let us verify that $f^{\mu}$ is indeed an element of $\cF_p$. Clearly $f^{\mu}(x)$ lies in $[0,1]$ for all $x\in[0,1]^p$. Let $e^i\in\RR^p$ be the unit vector with $1$ in coordinate $i$ and 0 in all other coordinates. To prove monotonicity, let $x=(x_1,\dots,x_p)\in [0,1]^p$ and consider the following cases. 
   
   Suppose $x\in I^{-}$. Pick any $\alpha >0$ and coordinate $s\in \{1,\dots,p\}$ such that $x+\alpha e^s\in [0,1]^p$ (such an $\alpha$ and $s$ must exist since $x$ is not the all-ones vector). Then $x+\alpha e^s$ is in some cube $I_j$ or still in $I^-$, so $f^{\mu}$ takes value $\mu_j\ge0$ or $0$, both options are $\ge0=f^{\mu}(x)$ (preserving monotonicity).
   
    Suppose $x \in I_j$ for some $j$, i.e., $x_i \in \left(\frac{k_i^j - 1}{n^{1/p}}, \frac{k_i^j}{n^{1/p}}\right]$ for each $1\le i \le p$. Then $f^{\mu}(x)=\mu_j$. Assume $x$ is not the all-ones vector. Now again consider $x+\alpha e^s$ for some $s\in\{1,\dots,p\}$ where $\alpha>0$ and $x+\alpha e^s\in[0,1]^p$. Then $x+\alpha e^s$ belongs to some cube $I_t$ where $l_t$  has the same coordinates as $l_j$ except a possibly larger $s$th coordinate, and $f^{\mu}(x+\alpha e^s)=\mu_t$. Since $\mu\in Q_{0,1}$, for some $g\in\cF_p$, $\mu_t = g(l_t)$ and $\mu_j = g(l_j)$. But since $l_t$ and $l_j$ differ only in the $s$th coordinate and $g$ is non-decreasing in each coordinate, we conclude $\mu_j \le \mu_t$. So $f^{\mu}(x)\le f^{\mu}(x+\alpha e^s)$. If $x$ is the all-ones vector, then repeat the same logic but compare with $x-\alpha e^s\in[0,1]^p$. This verifies that $f^{\mu}\in\cF_p$, so we have a well-defined mapping from $Q_{0,1}$ to $\cF_p$.

   Recalling our $\delta$-covering $\cG_p$, for each $g\in \cG_p$, if it exists, pick a single $\mu(g)\in Q_{0,1}$ such that $\|f^{\mu(g)} - g\|_{L^2(\mu)}<\delta$. We then produce a subset $Q_{0,1}' =\{\mu(g):g\in \cG\},$ with $|Q_{0,1}'|\le |\cG_p|\le C\delta^{-2(p-1)}$. 

   Let us verify that $Q_{0,1}'$ is indeed an $\varepsilon$-covering of $Q_{0,1}$. Pick any $\mu\in Q_{0,1}$. Then $f^{\mu}$ is an element of $\cF_p$, and since $\cG_p$ is a $\delta$-covering of $\cF_p$, there exists $g\in\cG_p$ such that $\|f^{\mu} - g\|_{L^2(\mu)}<\delta$. This also implies the existence of $\mu(g)$ in $Q_{0,1}'$ such that $\|f^{\mu(g)} - g\|_{L^2(\mu)}<\delta$, possibly equal to $\mu$ itself. By the triangle inequality, $\|f^{\mu} - f^{\mu(g)}\|_{L^2(\mu)}<2\delta$. Note that $f^{\mu}$ and $f^{\mu(g)}$ are constant on each cube $I_1,\dots, I_n$ and each of these cubes has volume $1/n$. Moreover, $I^-$ has volume $0$ with respect to the Lebesgue measure. Hence \begin{align*}
       \underbrace{\|f^{\mu} - f^{\mu(g)}\|_{L^2(\mu)}^2}_{<4\delta^2} &= \int_{[0,1]^p}(f^{\mu}(x) - f^{\mu(g)}(x))^2 \mathrm{d}\mu(x) \\
       &= \sum_{j=1}^n \int_{I_j}(f^{\mu}(x) - f^{\mu(g)}(x))^2 \mathrm{d}\mu(x) \\
       &= n^{-1}\sum_{j=1}^n [\mu_j - (\mu(g))_j]^2 \\
       &= n^{-1}\|\mu - \mu(g)\|_2^2.
   \end{align*}  Thus, $\|\mu - \mu(g)\|_2 \le 2\sqrt{n}\delta=\varepsilon$, so $Q_{0,1}'$ is indeed an $\varepsilon$-covering of $Q_{0,1}$ of cardinality bounded by $C\delta^{-2(p-1)}= C\left(\frac{\varepsilon}{2\sqrt{n}}\right)^{-2(p-1)}$. In the case $p =2$, we instead have $|Q_{0,1}'|\le C\delta^{-2}(\log 1/\delta)^2= C\left(\frac{\varepsilon}{2\sqrt{n}}\right)^{-2}\left(\log \frac{2\sqrt{n}}{\varepsilon}\right)^2$. We have therefore upper-bounded the size of a minimal $\varepsilon$-covering of $Q_{0,1}$.

   Now we consider arbitrary $a<b$, and again assume $p>2$. Let $\varepsilon>0$. Consider the linear function $\phi(x)=\frac{x}{b-a}-\frac{a}{b-a}$ that satisfies $\phi(a)=0$ and $\phi(b)=1$ and has inverse $\phi^{-1}(x) =(b-a)x+a$. Let $\Phi \colon\RR^n\to\RR^n$ apply $\phi$ coordinate-wise, and let $\Phi^{-1}\colon\RR^n\to\RR^n$ apply $\phi^{-1}$ coordinate-wise instead. Observe that $\Phi(Q_{a,b})\subseteq Q_{0,1}$ since if $f\colon[0,1]^p\to[a,b]$ is non-decreasing in each variable, then $\phi\circ f\colon[0,1]^p\to[0,1]$ is still non-decreasing in each variable and thus belongs to $\cF_p$. Therefore, using our earlier result, there exists an $\frac{\varepsilon}{b-a}$-covering $Q_{a,b}'$ of $Q_{0,1}$ and therefore of $\Phi(Q_{a,b})$ with $|Q_{a,b}'|\le C\left(\frac{\varepsilon}{2\sqrt{n}(b-a)}\right)^{-2(p-1)}$. 
   
   Now observe that $\Phi^{-1}(Q_{a,b}')$ is a subset of $Q_{a,b}$, with cardinality $|\Phi^{-1}(Q_{a,b}')|\le C\left(\frac{\varepsilon}{2\sqrt{n}(b-a)}\right)^{-2(p-1)}$. Let us verify that $\Phi^{-1}(Q_{a,b}')$ is an $\varepsilon$-covering of $Q_{a,b}$. Pick any $\mu\in Q_{a,b}$. Then for some $f\colon[0,1]^p\to[a,b]$ non-decreasing in each variable, $\mu=(f(l_1),\dots,f(l_n))$. Then $\Phi(\mu)\in \Phi(Q_{a,b})$ so there is some $\eta \in Q_{a,b}'$ with $\|\Phi(\mu)-\eta\|_2 \le \frac{\varepsilon}{b-a}$. Applying the definition of $\Phi$ and rearranging shows $\|\mu-\Phi^{-1}(\eta)\|_2 \le\varepsilon$, and $\Phi^{-1}(\eta)$ belongs to the claimed covering set $ \Phi^{-1}(Q_{a,b}')$. 
   
   The $p=2$ case is similar, and we instead obtain a covering of cardinality upper-bounded by $C\left(\frac{\varepsilon}{2\sqrt{n}(b-a)}\right)^{-2}\left(\log \frac{2\sqrt{n}(b-a)}{\varepsilon}\right)^2$.
\end{proof}

    \begin{proof}[\hypertarget{proof:multivariable_isotone_lower}{Proof of Lemma \ref{multivariable_isotone_lower}}]
        We first prove the claim for $a=0$ and $b=1$. We re-use the notation from the proof of Lemma \ref{multivariable_isotone_upper}, recalling our cubes $\cI_n=\{I_1,\dots, I_n\}$ of side lengths $1/n^{1/p}$ and volume $1/n$. We will show for $\varepsilon \gtrsim 1/n^{1/p}$ that $\cM\left(\sqrt{\frac{ c_p n\varepsilon}{16}}, Q_{0,1}\right) \gtrsim 2^{c_p\varepsilon^{1-p}/2}$ where $c_p$ is some constant depending on $p$. Performing a change of variables, this will imply $\log \cM\left(\varepsilon, Q_{0,1}\right) \gtrsim \left(\tfrac{\varepsilon}{2\sqrt{n}}\right)^{-2(p-1)}$ for $\varepsilon \gtrsim  \sqrt{\frac{n}{n^{1/p}}}$ as desired.

        Assume $\varepsilon\gtrsim 1/n^{1/p}$. We assume $\varepsilon = 2^{-m}$ for some $m\in \NN$, noting that $\varepsilon <1$. For simplicity, we assume $n^{1/p}=k2^m$ for some $k\in\NN$, i.e., $\varepsilon = k/n^{1/p}$. This means the cubes in $\cI_n$ have side lengths $2^{-m}/k$. Let's instead partition $[0,1]^p$ into a coarser set of cubes by taking cubes with side length $2^{-m}$ instead of $2^{-m}/k$. To do so, define  $\varepsilon^{-p}=2^{mp}$ cubes $\cJ_m=\{J_1,\dots, J_{2^{mp}}\}$ of the form $\prod_{i=1}^p (r_i\varepsilon, (r_i+1)\varepsilon]$ where $r_i\in\{0,1,\dots,2^{m}-1\}$. Each cube in $\cJ_m$ contains exactly $k^p=n/2^{mp}\in\NN$ cubes of $\cI_n$, and the boundary of cubes in $\cJ_m$ are a subset of boundaries of cubes in $\cI_n$. Note that $[0,1]^p$ can be partitioned as $I^-\cup\bigcup_{i=1}^{2^{mp}} J_{i}$, where $I^-$ was defined in the proof of Lemma \ref{multivariable_isotone_upper}.
     
        Now, we partition the cubes in $\cJ^m$ as follows: let $\overline{\cJ}_m$ be the set of cubes with $\sum_{i=1}^p r_i= 2^m$, $\overline{\cJ}_m^{+}$ the cubes with $\sum_{i=1}^p r_i> 2^m$, and $\overline{\cJ}_m^{-}$ the cubes with $\sum_{i=1}^p r_i< 2^m$.
        
        Consider the set $\cG_p$ of functions $g$ on $[0,1]^p$ such that $g$ is either $1$ or $0$ in each cube in $\overline{\cJ}_m$ and $g$ is identically $0$ on the cubes in $\overline{\cJ}_m^{-}$ and identically 1 on the cubes in $\overline{\cJ}_m^{+}$. We set $g$ to be value $0$ on the set $I^{-}$. Note that $|\cG_p|=2^{|\overline{\cJ}_m|}$. 
        
        Now let us prove that $\cG_p\subseteq\cF_p$. Clearly $g\in\cG_p$ takes values in $[0,1]$. To show the non-decreasing property, it suffices to enumerate all possible pairs of points where $g$ takes value $0$ on one point and $1$ on the other and argue this does not violate monotonicity. First, consider $g$ taking value $0$ on a cube in $\overline{\cJ}_m^{-}$ and $1$ on a cube in $\overline{\cJ}_m^{+}$. Well if $g$ is $1$ on a cube $\prod_{i=1}^p (r_i\varepsilon, (r_i+1)\varepsilon]\in \overline{\cJ}_m^+$ with $\sum_{i=1}^p r_i > 2^m$, moving from that cube in the (positive) direction of coordinate $s$ will never lead to a cube in $\overline{\cJ}_m^-$ for any $s$ since the value of $r_i$ can not decrease. So we cannot obtain the needed comparable point to conclude monotonicity was violated. By near identical logic, $g$ taking value $0$ in $\overline{\cJ}_m^{-}$ and $1$ on a cube in $\overline{\cJ}_m$ cannot lead to a violation, and neither will taking value $0$ in $\overline{\cJ}_m$ and $1$ on $\overline{\cJ}_m^{+}$. Next, a violation within $\overline{\cJ}_m$ cannot occur. This is because if we are at a cube with $\sum_{i=1}^p r_i=2^m$ (where $g$ is either $0$ or $1$) and move  along the positive $s$ coordinate for any $s$ until a new cube is reached, the new cube will have its defining coordinates satisfying $\sum_{i=1}^p r_i'>2^m$. But $g$ will be $1$ on this new cube, meaning no violation occurred.  Lastly, $g$ taking value $0$ on $I^-$ does not cause an issue. For if $g$ takes value $1$ at a point, it does not belong to $I^-$, i.e., that point does not have a 0 coordinate. But moving in the positive direction in coordinate $s$ will preserve this fact for any $s$, so we cannot arrive at $I^-$, i.e., a violation cannot occur. Hence $\cG_p\subseteq\cF_p$.
        
        We can show $|\overline{\cJ}_m| \ge c_p \cdot \varepsilon^{1-p}$, where $c_p$ is a constant depending only on $p$. To see this, note that $|\overline{\cJ}_m|$ is the number of integer tuples $(r_1,\dots,r_p)$ with $\sum_{i=1}^p r_i=2^m=\varepsilon^{-1}$ and $0\le r_i\le 2^{m-1}$. Setting $r_i'=1+r_i$, this is equivalent to counting the number of integer tuples $(r_1',\dots,r_p')$ with $\sum_{i=1}^p r_i'=2^m+p$ and $1\le r_i'\le 2^m$. By a standard stars-and-bars combinatorial argument, $|\overline{\cJ}_m| = \binom{2^m+p-1}{p-1}=\binom{\varepsilon^{-1}+p-1}{p-1}$. Then using a well-known binomial coefficient bound, \begin{align*}
            |\overline{\cJ}_m| &\ge \frac{(\varepsilon^{-1}+p -1 )^{p-1}}{(p-1)^{p-1}}  \ge \frac{(\varepsilon^{-1})^{p-1}}{(p-1)^{p-1}}. 
        \end{align*}
        Indeed, $|\overline{\cJ}_m| \ge c_p \cdot \varepsilon^{1-p}$ with $c_p$ only depending on $p$ as claimed.
        
        Observe that there is a bijective correspondence between $\cG_p$ and $\{0,1\}^{|\overline{\cJ}_m|}$. We apply a Varshamov-Gilbert type bound to this set of binary strings, and equivalently $\cG_p$, following the formulation of \citet[Lemma 4.12]{rigollet2023highdimensional} with $\gamma = 1/4$. The lemma states that there exists $g_1,\dots,g_N\in \cG_p$ with $N\ge 2^{|\overline{\cJ}_m|/(32\log 2)} \ge 2^{c_p\varepsilon^{1-p}/(32\log 2)}$ such that for any $i\ne j$, the functions $g_i,g_j$ disagree on at least $\frac{|\overline{\cJ}_m|}{4}\ge \frac{c_p \varepsilon^{1-p}}{4}$ many cubes in $\cJ_m$. Let $\cJ_m^{i,j}$ be these cubes for each $i\ne j$, so that $|\cJ_m^{i,j}|\ge  \frac{c_p \varepsilon^{1-p}}{4}$. Since each cube of $\cJ_m$ is partitioned into cubes of $\cI_n$, we can define $\cI_n^{i,j}$ as the cubes of $\cI_n$ on which $g_i$ and $g_j$ disagree, so that the cubes of $\cJ_m^{i,j}$ and $\cI_n^{i,j}$ both partition the same region.

        Each $g_i$ defines a unique point $\mu_i = (g_i(l_1),\dots, g_i(l_n))\in Q_{0,1}\subseteq \RR^n$ where $l_1,\dots,l_n$ are the lattice points. Let us compute  $\|\mu_i-\mu_j\|_2$ for $i\ne j$ to verify we have formed a packing set of $Q_{0,1}$. Note that $g_i-g_j$ is constant within any cube in $\cI_n$, the difference being either $0$ or $1$. By counting the number of cubes such that $g_i(l)\ne g_j(l)$, we count the number of non-zero coordinates of $\mu_i-\mu_j$. Recalling that each cube in $\cI_n$ has volume $n^{-1}$ and each cube in $\cJ_m$ volume $\varepsilon^p$, we have 
        \begin{align*}
            \frac{\|\mu_i - \mu_j\|_2^2}{n} 
            &=\sum_{I \in \cI_n^{i,j}}\int_I  \underbrace{|g_i(x)-g_j(x)|^2}_{=1}  \mathrm{d} x \\ &=  \sum_{J \in \cJ_m^{i,j}}\int_J  \underbrace{|g_i(x)-g_j(x)|^2}_{=1} \mathrm{d}x \\ 
            &= |\cJ_m^{i,j}|\cdot \varepsilon^p \ge \frac{c_p \varepsilon^{1-p}}{4} \cdot  \varepsilon^p 
            = \frac{c_p\varepsilon}{4}.
            \end{align*}
            Therefore, $\|\mu_i-\mu_j\|_2 \ge \sqrt{\frac{ c_p n\varepsilon}{4}}$. This implies $\cM\left(\sqrt{\frac{ c_p n\varepsilon}{4}}, Q_{0,1}\right) \ge 2^{c_p\varepsilon^{1-p}/(32\log 2)}$ for $\varepsilon \gtrsim \frac{1}{n^{1/p}}$, so that \[\log \cM\left(\sqrt{\frac{ c_p n\varepsilon}{4}}, Q_{0,1}\right) \ge \frac{c_p\varepsilon^{1-p}}{32}.\] Perform a change of variables with $\tilde\varepsilon = \sqrt{\frac{ c_p n\varepsilon}{4}}$ which satisfies $\tilde\varepsilon \gtrsim \sqrt{n/n^{1/p}}$. Then \[\log \cM\left(\tilde\varepsilon, Q_{0,1}\right) \ge \frac{c_p}{32} \cdot \left(\frac{4\tilde\varepsilon^2}{c_p n}\right)^{1-p}\gtrsim \left(\frac{\tilde\varepsilon}{2\sqrt{n}}\right)^{-2(p-1)}.\] Since $\varepsilon\gtrsim 1/n^{1/p}$ if and only if $\tilde\varepsilon\gtrsim \sqrt{n/n^{1/p}}$, we have proven for all $\tilde\varepsilon\gtrsim \sqrt{n/n^{1/p}}$ that $\log \cM\left(\tilde\varepsilon, Q_{0,1}\right)\gtrsim \left(\frac{\tilde\varepsilon}{2\sqrt{n}}\right)^{-2(p-1)}$. 

            Now we consider arbitrary $a<b$, and again assume $p\ge 2$. Suppose $\frac{\varepsilon}{b-a} \gtrsim \sqrt{n/n^{1/p}}$.  Consider the linear function $\phi(x)=\frac{x}{b-a}-\frac{a}{b-a}$ which satisfies $\phi(a)=0$ and $\phi(b)=1$ and has inverse $\phi^{-1}(x) =(b-a)x+a$. Let $\Phi \colon\RR^n\to\RR^n$ apply $\phi$ in each coordinate, and $\Phi^{-1}\colon\RR^n\to\RR^n$ apply $\phi^{-1}$ instead. It is easy to verify that $\Phi(Q_{a,b})= Q_{0,1}$. Then by our prior result, there is some set $Q_{a,b}'\subseteq \Phi(Q_{a,b})$ with  $\log |Q_{a,b}'|\ge \left(\frac{\varepsilon}{2\sqrt{n}(b-a)}\right)^{-2(p-1)}$ such that for any $\mu,\mu'\in Q_{a,b}'$, we have $\|\mu-\mu'\|_2 \ge \frac{\varepsilon}{b-a}.$
   
   Now observe that $\Phi^{-1}(Q_{a,b}')$ is a subset of $Q_{a,b}$, with cardinality satisfying $\log |\Phi^{-1}(Q_{a,b}')|\ge \left(\frac{\varepsilon}{2\sqrt{n}(b-a)}\right)^{-2(p-1)}$. Moreover, by definition of $\Phi^{-1}$, for any $\tilde\mu,\tilde\mu'\in \Phi^{-1}(Q_{a,b}')$, we have $\|\tilde\mu-\tilde\mu'\|_2 \ge  \varepsilon.$ We have thus lower bounded the metric entropy of $Q_{a,b}$, i.e., provided $\frac{\varepsilon}{b-a} \gtrsim \sqrt{n/n^{1/p}}$, we have  \[\log \cM\left(\varepsilon, Q_{a,b}\right) \gtrsim \left(\frac{\varepsilon}{2\sqrt{n}(b-a)}\right)^{-2(p-1)}.\]
    \end{proof}

    \begin{proof}[\hypertarget{proof:lemma:multivariate:isotonic:local:metric}{Proof of Lemma \ref{lemma:multivariate:isotonic:local:metric}}]
        We prove the $a=0$ and $b=1$ case since the argument is identical otherwise. From our global entropy result in \eqref{eq:global:entropy:multivar:isotone}, there exist constants $0<c_1<c_2$ such that \begin{align*}
            c_1\cdot \left(\tfrac{\varepsilon}{2\sqrt{n}}\right)^{-2(p-1)} &\le \log \cM\left(\varepsilon, Q_{0,1}\right) \le \log \cM\left(\varepsilon/c^{\ast}, Q_{0,1}\right) \\ &\le c_2\cdot \left(\tfrac{\varepsilon}{2c^{\ast}\sqrt{n}}\right)^{-2(p-1)}. 
        \end{align*} Assume $c^{\ast} > ((1+c_1)/c_2)^{\tfrac{1}{2(p-1)}}$. Then, \begin{align*}
            \MoveEqLeft \log \cM\left(\varepsilon/c^{\ast}, Q_{0,1}\right)  - \log \cM\left(\varepsilon, Q_{0,1}\right) \\ &\ge c_2\cdot \left(\tfrac{\varepsilon}{2c^{\ast}\sqrt{n}}\right)^{-2(p-1)} -  c_1\cdot \left(\tfrac{\varepsilon}{2\sqrt{n}}\right)^{-2(p-1)} \\
            &= \left(\tfrac{\varepsilon}{2\sqrt{n}}\right)^{-2(p-1)} \left[ c_2 \cdot (c^{\ast})^{2(p-1)} - c_1\right] \\
            &\gtrsim \left(\tfrac{\varepsilon}{2\sqrt{n}}\right)^{-2(p-1)}.
        \end{align*} Hence using \eqref{yb:local:entropy:bound}, 
        \begin{align*}
            \left(\tfrac{\varepsilon}{2\sqrt{n}}\right)^{-2(p-1)} &\lesssim \log \cM_{Q_{0,1}}^{\loc}\left(\varepsilon\right) \le \log \cM\left(\varepsilon/c^{\ast}, Q_{0,1}\right) \\ &\lesssim \left(\tfrac{\varepsilon}{2\sqrt{n}}\right)^{-2(p-1)}.
        \end{align*}
    \end{proof}

    \begin{proof}[\hypertarget{proof:lemma:multivariate:isotone:LSE:optimal}{Proof of Lemma \ref{lemma:multivariate:isotone:LSE:optimal}}]
        Take $\varepsilon\asymp \sigma^{1/p}$, noting that this satisfies $\frac{1}{n^{1/2p}} \lesssim \varepsilon \lesssim 1$  by our assumption on $\sigma$. Then we can verify that $\varepsilon^2/\sigma^2 \asymp \sigma^{\tfrac{-2(p-1)}{p}}\asymp\log \cM_{Q_{-1/\sqrt{n},1/\sqrt{n}}}^{\loc}(\varepsilon)$ using \eqref{eq:multivariate_isotonic_metric_entropy}. Note that the minimum with $1$ is needed as if $\sigma^{1/p}$ is larger than the diameter of $K$ then the logarithm of local metric entropy becomes 0. Thus when $\varepsilon\asymp \sigma^{1/p}$, there exists absolute constants $C_1, C_2>0$ such that $\varepsilon^2/\sigma^2\ge C_1 \log \cM_{Q_{-1/\sqrt{n},1/\sqrt{n}}}^{\loc}(\varepsilon)$ and $\varepsilon^2/\sigma^2\le C_2 \log \cM_{Q_{-1/\sqrt{n},1/\sqrt{n}}}^{\loc}(\varepsilon)$. Let us now show that this implies $\varepsilon^{\ast}\asymp \sigma^{1/p}$.
    
    First, take $C>\max(1,\sqrt{C_2})$. Then by the non-increasing property of the metric entropy \citep[Lemma II.8]{neykov2022minimax}, we have \begin{align*}
        \varepsilon^2/\sigma^2 &\le C_2 \log \cM_{Q_{-1/\sqrt{n},1/\sqrt{n}}}^{\loc}(\varepsilon) \\ &\le C^2\log \cM_{Q_{-1/\sqrt{n},1/\sqrt{n}}}^{\loc}(\varepsilon/C),
    \end{align*} which rearranges to $(\varepsilon/C)^2/\sigma^2 \le \log \cM_{Q_{-1/\sqrt{n},1/\sqrt{n}}}^{\loc}(\varepsilon/C)$. By definition as a supremum, we have $\varepsilon^{\ast}\ge \varepsilon/C$. 
    
    Then take $0<C'<\min(1,\sqrt{C_1})$. Then by the non-increasing property of the metric entropy, we have \begin{align*}
        \varepsilon^2/\sigma^2 &\ge C_1 \log \cM_{Q_{-1/\sqrt{n},1/\sqrt{n}}}^{\loc}(\varepsilon) \\ &\ge (C')^2\log \cM_{Q_{-1/\sqrt{n},1/\sqrt{n}}}^{\loc}(\varepsilon/C'),
    \end{align*} which rearranges to $(\varepsilon/C')^2/\sigma^2 >\log \cM_{Q_{-1/\sqrt{n},1/\sqrt{n}}}^{\loc}(\varepsilon/C')$. By definition, we have $\varepsilon^{\ast}\le \varepsilon/C'$. Thus, $\varepsilon^{\ast}\asymp \varepsilon\asymp \sigma^{1/p}$, and the minimax rate is given by ${\varepsilon^{\ast}}^2\asymp \sigma^{2/p}$.
    
    We now upper bound $\epsLSE$. Observe  that $Q_{-1/\sqrt{n},1/\sqrt{n}} \subseteq B_2(2)$. Set $K= Q_{-1/\sqrt{n},1/\sqrt{n}} = Q_{-1/\sqrt{n},1/\sqrt{n}}\cap B_2(2)$, and then using Remark \ref{remark:upper:bound:wK} and \citet[Proposition 5]{isotonic_general_dimensions} (using $B_2(2)$ in place of $B_2(1)$ simply scales the result), for $\sigma = \frac{1}{\sqrt{n}}$ we have \begin{align*} \epsLSE &\lesssim \sqrt{\sigma  w(K)} = \sqrt{\tfrac{1}{\sqrt{n}}\cdot w(K)} \\ &\lesssim \sqrt{\tfrac{1}{\sqrt{n}}\cdot n^{1/2 - 1/p}\cdot\log^4(n)} \\ &=\sigma^{1/p}\log^2n.
    \end{align*} \end{proof}

\begin{lemma}\label{LSE:is:minimax:in:R1} Let $a>0$ and consider $K = [-a,a]$. Then the estimator defined in \citet{neykov2022minimax} is the projection $\Pi_K Y$, which is the constrained least squares estimator.
\end{lemma}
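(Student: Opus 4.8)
The plan is to recall the explicit construction of the estimator from \citet{neykov2022minimax} in the one-dimensional case $K = [-a,a]$ and then verify directly that it reduces to the clamping map $\Pi_K Y = \mathrm{sign}(Y)\cdot\min(|Y|, a)$, which is exactly the constrained LSE $\argmin_{\nu \in [-a,a]} (Y - \nu)^2$. The estimator of \citet{neykov2022minimax} is built on a dyadic/packing scheme driven by the local entropy: one identifies the relevant scale $\varepsilon^*$ (or a dyadic proxy), takes a maximal packing of $K$ at that scale, and outputs the packing point closest to $Y$ (or a suitably localized version thereof). So the first step is to write down what this scheme produces when $K$ is just an interval. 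For an interval of half-length $a$, the local entropy is $\log M^{\mathrm{loc}}(\varepsilon) \asymp \log(1 \vee \varepsilon^{-1}\cdot(\varepsilon \wedge a))$ up to the usual constants — concretely the local packing of $B(\theta,\varepsilon)\cap[-a,a]$ has $\asymp (\varepsilon/(\varepsilon/c^*)) \wedge (\text{stuff})$ points, so the geometry is completely transparent.

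The key steps, in order: (1) State the estimator of \citet{neykov2022minimax} specialized to $n=1$, $K=[-a,a]$; identify that every ``stage'' of its multi-resolution construction is just a nested sequence of sub-intervals of $[-a,a]$, so the output is the center of the final sub-interval, which is the nearest point of a fine grid on $[-a,a]$ to $Y$. (2) Observe that as the grid is refined (or by taking the appropriate limit / the exact construction, which already localizes to arbitrarily fine scale once $Y$ is far inside or outside), the nearest-grid-point map converges to, and in fact coincides with, the metric projection onto the closed interval: if $Y \in [-a,a]$ it returns (a point arbitrarily close to, hence) $Y$ itself; if $Y > a$ it returns $a$; if $Y < -a$ it returns $-a$. (3) Note that $\Pi_{[-a,a]} Y = \argmin_{\nu\in[-a,a]}(Y-\nu)^2$ by definition of Euclidean projection onto a convex set, so the two estimators agree. (4) If the construction in \citet{neykov2022minimax} is stated only up to a bounded number of dyadic levels (so that it returns a grid point rather than exactly the projection), replace ``coincides with'' by ``coincides up to an additive $O(\sigma)$ term,'' which is all that is needed downstream in \eqref{eq:minimax:rate:hyperrectangle} since the risk bound $\EE(\hat\nu_i - \mu_i)^2 \asymp \min(a_i^2,\sigma^2)$ only requires agreement up to constants; but I expect the cleaner statement (exact equality) to hold because on an interval the projection is itself piecewise linear and the packing scheme is exact.

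The main obstacle will be bookkeeping about the precise form of the \citet{neykov2022minimax} estimator: that paper's construction is somewhat involved (stagewise, with a stopping rule tied to $\varepsilon^*$), so the real work is unwinding the definition and checking that in the trivial one-dimensional interval case none of the machinery does anything nontrivial — every packing is just an arithmetic progression inside $[-a,a]$, and ``closest packing point to $Y$'' degenerates to clamping. I do not anticipate any genuine mathematical difficulty, only the need to quote the construction accurately and observe that projection onto a segment and nearest-point-in-a-fine-subgrid agree (exactly, or up to $O(\sigma)$, which suffices). A one-line sanity check — that both estimators are $1$-Lipschitz, odd, and equal to $Y$ on $[-a,a]$ and constant outside, and a function on $\mathbb{R}$ with those properties is unique — can be used to close the argument without chasing the dyadic levels explicitly.
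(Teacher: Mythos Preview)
Your proposal is essentially correct and follows the same route as the paper: unwind the multi-stage packing construction of \citet{neykov2022minimax} on the interval, note that at each stage the update is ``closest packing point to $Y$,'' and conclude this converges to $\Pi_{[-a,a]}Y$.

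Two places where the paper is sharper than your outline. First, for the case $Y\in K$ the paper does not chase the dyadic levels at all: it simply quotes Lemma~V.2 of \citet{neykov2022minimax}, which guarantees $\|\nu^*-\Pi_KY\|\le 6(C+1)\|Y-\Pi_KY\|$; when $Y\in K$ the right-hand side is zero, so $\nu^*=Y$ exactly. This makes your hedge in step~(4) about ``up to $O(\sigma)$'' unnecessary --- exact equality really does hold. Second, for $Y\notin K$ (say $Y>a$) the paper makes your step~(2) precise via the explicit induction $|\nu_k-a|\le d/2^k$, using that the maximal packing at each level is also a covering. Your proposed one-line ``sanity check'' (uniqueness of a $1$-Lipschitz, odd map equal to the identity on $[-a,a]$ and constant outside) would first require proving the Neykov estimator has those properties, which is not obvious from the construction and amounts to what you are trying to show; so that shortcut does not actually save work.
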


\begin{proof}

First, observe that when $Y\in K$ and $K$ is bounded, the estimator $\nu^{\ast}$ from \citet{neykov2022minimax} will output $Y$. This can be seen from the proof of Lemma V.2 of \citet{neykov2022minimax}, where it is shown $\nu^{\ast}$ will be within distance $6(C+1)\|Y-\Pi_K Y\|_2$ from the projection $\Pi_K Y$. But in this case, $\Pi_K Y=Y$, so $\nu^{\ast}=\Pi_K Y=Y$.

Suppose $Y \not \in [-a,a]$, and suppose without loss of generality that $Y > a$.  Let $\nu_{0}$ be an arbitrary point in $K$ that starts the algorithm of \citet{neykov2022minimax} and let $c$ be the constant appearing in the local metric entropy. Let $\nu_k$ denote the $k$th update. We claim for each $k$, $|\nu_k-a|\le d/2^k$ where $d=2a$ is the diameter of $K$ (ignoring some absolute constants). This will prove that $\nu_k\to a$. We proceed by induction. For $k=1$, observe that we form a $d/c$-packing of $B(\nu_0,d)\cap K=K$, and the first update $\nu_1$ is the point closest to $Y$, and therefore to $a$. Since we form a $(d/c)$-maximal packing of $K$, this also implies a $(d/c)$-covering of $K$ by the packing set points, so we must have $|\nu_1-a|\le d/c\le d/2^1$, verifying the base case. Now suppose $|\nu_k-a|\le d/2^k$ holds for some $k\in\NN$. To obtain $\nu_{k+1}$, we form a $d/(2^k c)$-packing of $B(\nu_k, d/2^k)\cap K$. Then $\nu_{k+1}$ will be the point closest to $Y$, and therefore $a$. Because $|\nu_k-a|\le d/2^k$, we know $a\in B(\nu_k, d/2^k)\cap K$. Thus, $a$ is within distance $d/(2^k c) \le d/2^{k+1}$ of one of the points in the maximal packing set of $B(\nu_k, d/2^k)\cap K$, in particular $\nu_{k+1}$. This completes the induction.
\end{proof}


    \begin{proof}[\hypertarget{proof:hyperrectangle:optimal:lemma}{Proof of Lemma \ref{hyperrectangle:optimal:lemma}}]
        First, we show that $\sum_{i=1}^n a_i^2 \wedge \sigma^2 \lesssim (k+1)\sigma^2 \wedge \sum_{i=1}^n a_i^2$. It is trivial that $\sum_{i=1}^n a_i^2\wedge \sigma^2 \le \sum_{i=1}^n a_i^2$, and using the definition of $k$, \begin{align*}
            \sum_{i = 1}^n a_i^2 \wedge \sigma^2 &= \sum_{i = 1}^{n-k-1} a_i^2 \wedge \sigma^2 + \sum_{i = n-k}^n a_i^2 \wedge \sigma^2  \\ &\le   \sum_{i = 1}^{n-k-1} a_i^2 +\sum_{i=n-k}^n \sigma^2 
 \\ &=\sum_{i = 1}^{n-k-1} a_i^2 + (k+1)\sigma^2 \\ &\le 2(k+2)\sigma^2.
        \end{align*} Thus $\sum_{i=1}^n a_i^2 \wedge \sigma^2 \lesssim (k+1)\sigma^2 \wedge \sum_{i=1}^n a_i^2$.
        
        Now we show $\sum_{i=1}^n a_i^2 \wedge \sigma^2 \ge (k+1)\sigma^2 \wedge \sum_{i=1}^n a_i^2$. We consider two cases. First, suppose $a_{n-k}^2 < \sigma^2$. Then by ordering of the $a_i$ and definition of $k$, we have \begin{align*}
            \sum_{i = 1}^n a_i^2 \wedge \sigma^2 &\ge \sum_{i = 1}^{n-k} a_i^2 \wedge \sigma^2  = \sum_{i = 1}^{n-k} a_i^2 \geq (k + 1)\sigma^2\\ &\ge (k+1)\sigma^2 \wedge \sum_{i=1}^n a_i^2.
        \end{align*}
 
  On the other hand, if $a_{n-k} \geq \sigma^2$ then \begin{align*}
      \sum_{i=1}^n a_i^2\wedge \sigma^2 &\ge \sum_{i = n-k}^n a_i^2 \wedge \sigma^2   = \sum_{i = 1}^{n-k} \sigma^2 =(k + 1)\sigma^2 \\ &\ge (k+1)\sigma^2 \wedge \sum_{i=1}^n a_i^2.
  \end{align*} Thus, we have the desired lower bound on $\sum_{i=1}^n a_i^2\wedge \sigma^2$ in both cases.
    \end{proof}

    \begin{proof}[\hypertarget{proof:lemma:local:metric:entropy:ell_1}{Proof of Lemma \ref{lemma:local:metric:entropy:ell_1}}]
        It is known \citep[Section III.D][e.g.,]{neykov2022minimax} that the global entropy satisfies
\begin{align*}
   \log \cM(\varepsilon, K) \asymp \begin{cases}
    \frac{\log (\varepsilon^2 n)}{\varepsilon^2} & \varepsilon \gtrsim 1/\sqrt{n} \\
    n & \varepsilon \asymp 1/\sqrt{n} \\
    n \log \frac{1}{n\varepsilon^2} & \varepsilon \lesssim 1/\sqrt{n}.
\end{cases} 
\end{align*} Recall also the earlier result \eqref{yb:local:entropy:bound} which is due to  \citet{yang1999information}. In the  $\varepsilon\gtrsim 1/\sqrt{n}$ case, we assume $\varepsilon \ge {c^{\ast}}^2/\sqrt{n}$, so that  $\log(\varepsilon^2 n)/2\log({c^{\ast}}^2) \ge 1$.
From the global entropy result stated above, since both $\varepsilon$ and $\varepsilon/c^{\ast}$ are $\gtrsim 1/\sqrt{n}$, there exist absolute constants $c_1,c_2>0$ such that \begin{align*} 
    \log \cM(\varepsilon, K) &\le \log \cM(\varepsilon/c^{\ast},K) \le c_1 \cdot \frac{\log (\varepsilon^2 n)}{\varepsilon^2} , \\  \log \cM(\varepsilon/c^{\ast}, K) &\ge  c_2 \cdot\frac{\log (\varepsilon^2 n/{c^{\ast}}^2)}{\varepsilon^2/{c^{\ast}}^2}.
\end{align*} Suppose $c^{\ast}$ is taken sufficiently large such that $c_2{c^{\ast}}^2/2> c_1$. Hence \begin{align*}
    \MoveEqLeft \log \cM(\varepsilon/c^{\ast}, K) - \log \cM(\varepsilon, K) \\ &\ge c_2\cdot \frac{\log(\varepsilon^2 n/{c^{\ast}}^2)}{\varepsilon^2/{c^{\ast}}^2} - c_1\cdot \frac{\log(\varepsilon^2n)}{\varepsilon^2} \\ &= (c_2{c^{\ast}}^2-c_1)\cdot\frac{\log (\varepsilon^2 n)}{\varepsilon^2}- c_2{c^{\ast}}^2 \cdot\frac{\log({c^{\ast}}^2)}{\varepsilon^2} \\
    &\ge (c_2{c^{\ast}}^2-c_1)\cdot\frac{\log (\varepsilon^2 n)}{\varepsilon^2}- \frac{c_2{c^{\ast}}^2\log({c^{\ast}}^2)}{\varepsilon^2} \cdot \frac{\log(\varepsilon^2 n)}{2\log({c^{\ast}}^2)} \\ &= (c_2{c^{\ast}}^2/2-c_1)\cdot \frac{\log (\varepsilon^2 n)}{\varepsilon^2}.
\end{align*}  

Thus, using \eqref{yb:local:entropy:bound}, \begin{align*}
    (c_2{c^{\ast}}^2/2-c_1)\cdot \frac{\log (\varepsilon^2 n)}{\varepsilon^2} &\le \log \cMloc(\varepsilon) \\ &\le  \log \cM(\varepsilon/c^{\ast},K) \\ &\lesssim \frac{\log (\varepsilon^2 n)}{\varepsilon^2}. 
\end{align*}Thus, $\log \cMloc(\varepsilon)\asymp \frac{\log (\varepsilon^2 n)}{\varepsilon^2}$ when $\varepsilon\gtrsim 1/\sqrt{n}$ for a sufficiently large constant and with $c^{\ast}$ chosen appropriately large.

Next, consider the case where $\varepsilon \le {c^{\ast}}^2/\sqrt{n}$, i.e., $\varepsilon\asymp 1/\sqrt{n}$ or $\varepsilon\lesssim 1/\sqrt{n}$. case. Since $\eta\mapsto \log \cMloc(\eta)$ is non-increasing, we know \begin{align*}
    n &\gtrsim\log \cMloc(\varepsilon) \ge \log \cMloc({c^{\ast}}^2/\sqrt{n}) \\ &\asymp  \frac{\log (({c^{\ast}}^2/\sqrt{n})^2\cdot n)}{({c^{\ast}}^2/\sqrt{n})^2}\asymp n.
\end{align*} The first inequality was proven in Case 2 of the proof of Corollary \ref{corollary:geometric_average:minimax}.
    \end{proof}

\subsection{Proofs for Section \ref{subsection:example:suboptimal:LSE}}

    \begin{proof}[\hypertarget{proof:lemma:pyramid:suboptimal}{Proof of Lemma \ref{lemma:pyramid:suboptimal}}]
        Consider the balls $B(0,\|v\|_2/2)$ and $B(v,\|v\|_2/2)$. We have $w(B(0,\|v\|_2/2)\cap P)\ge w(K)$ because $K\subseteq B(0,\|v\|_2/2)\cap P$. To see this, note clearly $K\subseteq P$ and for any $k\in K$, using symmetry we have \[\|k\|_2 =\tfrac{1}{2}\|k-(-k)\|_2\le \tfrac{1}{2}\diam(K)\le \tfrac{1}{2}\|v\|_2.\] Next, $B(v,\|v\|_2/2)\cap P$ is contained in the set $Q=\cup_{\beta \in [0,1]}[\beta v+(1-\beta) (v/2+K/2)]$. To see this, suppose $x\in B(v, \|v\|_2/2)\cap P$. Then $x = \alpha v + (1-\alpha)k$ for some $k\in K$ and $\alpha\in[0,1]$ and must satisfy $\|x-v\|_2\le \|v\|_2/2$. This implies \[0 \le (1-\alpha)\|v-k\|_2=\|x-v\|_2  \le \tfrac{1}{2}\|v\|_2.\] Since $\|v-k\|_2 = \sqrt{\|v\|_2^2+\|k\|_2^2}$ by  the Pythagorean theorem and orthogonality of $v$ to $K$, we can show $1-\alpha \le 1/2$, i.e., $\alpha\ge 1/2$, since \[(1-\alpha)^2 \le \frac{\|v\|_2^2}{4\|v-k\|^2}= \frac{1}{4}\cdot\frac{\|v\|_2^2}{\|v\|_2^2 + \|k\|_2^2}\le \frac{1}{4}.\]  Now take $\beta = 2(\alpha - 1/2)\in[0,1]$, and write \[x = \left(\tfrac{1}{2} +\tfrac{\beta}{2}\right)v + \left(\tfrac{1}{2} -\tfrac{\beta}{2}\right)k = \beta v + (1-\beta)\left(\tfrac{v}{2}+\tfrac{k}{2}\right)\in Q.\] Therefore $w(Q) \ge w(B(v,\|v\|_2/2)\cap P)$.

For any $a\in\RR$, define $a_{+}=\max(0,a).$ Then we compute  \begin{align*}
    w(Q) &= \EE \sup_{q\in Q} \langle \xi, q\rangle \\ &= \EE \sup_{\beta\in[0,1], k\in K}\langle \xi, \beta v+(1-\beta)(v/2+k/2)\rangle \\ &= \EE[ \sup_{\beta \in [0,1]}\beta \langle \xi, v/2\rangle + \langle \xi,v/2\rangle+ (1-\beta) \sup_{k \in K} \langle \xi, k/2 \rangle]\\ &= \EE[ \sup_{\beta \in [0,1]}\beta \langle \xi, v/2\rangle + (1-\beta) \sup_{k \in K} \langle \xi, k/2 \rangle] \\ &= \EE[\sup_{k \in K} \langle \xi, k/2 \rangle] + \EE[\sup_{\beta\in[0,1]} \beta\langle \xi, v/2\rangle - \beta \sup_{k \in K} \langle \xi, k/2 \rangle]  \\ &=  \tfrac{1}{2}w(K) +\EE \left[\langle \xi, v/2\rangle - \sup_{k \in K}\langle \xi, k/2 \rangle\right]_+ \\ &\leq \tfrac{1}{2}w(K) + \EE (\langle \xi, v/2\rangle)_+ \\
    &= \tfrac{1}{2}w(K) + \sqrt{2/\pi}\cdot \|v\|_2/4,
\end{align*} where in the last few lines, we evaluated the supremum at $\beta\in\{0,1\}$ and used that $\sup_{k \in K}\langle \xi, k/2 \rangle \geq 0$ since $0 \in K$. The final line used properties of the rectified Gaussian distribution, proved in, e.g., \citet[Appendix A]{beauchamp_recitifed_Gaussian}. Recall we assume $\|v\|_2^2 \lesssim w(K)$. We thus have \begin{align*}
    w_{P, v}(\|v\|_2/2) &= w(B(v,\|v\|_2/2)\cap P) \le w(Q) \\ &\lesssim \tfrac{1}{2}w(K) + \|v\|_2 \\
    &\lesssim \tfrac{1}{2}w(K) +\sqrt{w(K)}.
\end{align*} 

Hence \begin{align*}
    w_{P, 0}(\|v\|_2/2) -w_{P, v}(\|v\|_2/2) &\ge w(K)  -w_{P,v}(\|v\|_2/2) \\ &\gtrsim \frac{1}{2}w(K)-\sqrt{w(K)}.
\end{align*} So if $w(K)$ is sufficiently big, then $w_{P, 0}(\|v\|_2/2) -w_{P, v}(\|v\|_2/2)\gtrsim  w(K)$.

Hence we have for constants $\kappa,\tilde C$ that \begin{align*}
    w_{P,0}(\|v\|_2/2) - w_{P,v}(\|v\|_2/2) \ge \kappa w(K) \geq \tilde C \|v\|_2^2.
\end{align*}  Then recalling the definition of $\overline{\varepsilon}(\sigma)$ in Theorem \ref{difference:of:local:widths:compared:to:eps:squared:thm} (take $\varepsilon = c^{\ast}\|v\|_2/2$, $\nu_1=v$, $\nu_2=0$), we obtain $\overline{\varepsilon} \gtrsim \|v\|_2$. This implies for sufficiently large $c$ (so that $\overline{\varepsilon}\gtrsim 1=\sigma$) that  $\epsLSE[P] \gtrsim \overline{\varepsilon}\gtrsim \|v\|_2$.
    \end{proof}

\begin{lemma} \label{lemma:multivariate:isotonic:suboptimal:lower:bound} Set $K= Q_{-1/\sqrt{n},1/\sqrt{n}}\subset\RR^n$ where $n^{1/p}\in\NN$ for simplicity. Then for $0<t<1$, $w_{K,0}(t) \gtrsim t n^{1/2-1/p}$.
\end{lemma}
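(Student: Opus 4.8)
The plan is to lower bound $w_{K,0}(t) = \EE \sup_{x \in B(0,t) \cap K} \langle \xi, x \rangle$ by exhibiting an explicit family of monotone step functions inside the small ball and showing that their supremum against the Gaussian vector is large. First I would recall the cube decomposition from the proofs of Lemmas \ref{multivariable_isotone_upper} and \ref{multivariable_isotone_lower}: partition $[0,1]^p$ into $\varepsilon^{-p}$ coarse cubes $\mathcal{J}_m = \{J_1, \ldots, J_{\varepsilon^{-p}}\}$ of side length $\varepsilon$, for an appropriately chosen scale $\varepsilon$ (to be tuned so that the resulting vectors land in $B(0,t)$), and further into the fine lattice cubes $\mathcal{I}_n$. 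Recall the anti-diagonal layer $\overline{\mathcal{J}}_m$ of cubes with $\sum r_i = 2^m$, whose cardinality is $c_p \varepsilon^{1-p}$. On this layer, consider functions that take a small constant value $\pm\delta$ (for some $\delta>0$ to be chosen) on each anti-diagonal cube according to a sign pattern, are $0$ below the layer and $2\delta$ above it (or, to stay symmetric around $0$, shift so values live in $[-\delta,\delta]$ on the layer itself and we restrict attention to perturbations supported only on the anti-diagonal layer). Monotonicity is preserved for exactly the reasons given in the proof of Lemma \ref{multivariable_isotone_lower}.

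The key computation: for a sign vector $s \in \{-1,+1\}^{\overline{\mathcal{J}}_m}$, the associated lattice vector $x_s \in \mathbb{R}^n$ has $\|x_s\|_2^2 = \delta^2 \cdot (\text{number of fine cubes in the layer}) = \delta^2 \cdot |\overline{\mathcal{J}}_m| \cdot (n \varepsilon^p) = \delta^2 c_p \varepsilon^{1-p} n \varepsilon^p = \delta^2 c_p n \varepsilon$. To force $x_s \in B(0,t)$ I would choose $\delta$ so that $\delta^2 c_p n \varepsilon \le t^2$, i.e., $\delta \asymp t/\sqrt{n\varepsilon}$. Now I bound the Gaussian width from below by maximizing over just this sign family: $\sup_{s} \langle \xi, x_s \rangle = \delta \sum_{J \in \overline{\mathcal{J}}_m} |\sum_{i : l_i \in J} \xi_i|$, since we may choose the sign on each cube to match the sign of the block sum. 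Each block sum $\sum_{i: l_i \in J}\xi_i$ is a centered Gaussian with variance $n\varepsilon^p$ (the number of lattice points per coarse cube), so its expected absolute value is $\asymp \sqrt{n\varepsilon^p}$. Summing over the $c_p \varepsilon^{1-p}$ cubes in the layer gives $w_{K,0}(t) \gtrsim \delta \cdot \varepsilon^{1-p} \cdot \sqrt{n \varepsilon^p} = \delta \varepsilon^{1-p/2}\sqrt{n}$. Substituting $\delta \asymp t/\sqrt{n\varepsilon}$ yields $w_{K,0}(t) \gtrsim t \varepsilon^{1/2 - p/2}\sqrt{n}/\sqrt{n\varepsilon} \cdot \sqrt{n} = t \sqrt{n}\, \varepsilon^{-p/2}$.

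To get the stated rate $t n^{1/2 - 1/p}$ I would pick $\varepsilon$ as small as the lattice allows, namely $\varepsilon \asymp n^{-1/p}$ (one coarse cube per lattice point, or a fixed number thereof), which gives $\varepsilon^{-p/2} \asymp n^{1/2}$... which is too large; instead I must respect the constraint that $n\varepsilon^p \ge 1$ and re-examine. The right choice is dictated by wanting the final bound to be $t n^{1/2-1/p}$: setting $\varepsilon \asymp n^{-1/p}$ gives $n\varepsilon^p \asymp 1$, block variance $\asymp 1$, and the number of anti-diagonal cubes $\asymp n^{(p-1)/p} = n^{1-1/p}$; then $\|x_s\|^2 \asymp \delta^2 n^{1-1/p}$, so $\delta \asymp t\, n^{-(1-1/p)/2} = t n^{1/(2p) - 1/2}$, and $w_{K,0}(t) \gtrsim \delta \cdot n^{1-1/p} \cdot 1 = t\, n^{1/(2p)-1/2} n^{1-1/p} = t\, n^{1/2 - 1/(2p)}$, which is even larger than claimed and hence certainly implies $w_{K,0}(t) \gtrsim t n^{1/2 - 1/p}$. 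The main obstacle, and the step requiring the most care, is exactly this bookkeeping: verifying that the constructed $x_s$ genuinely lie in $Q_{-1/\sqrt n, 1/\sqrt n}$ (the range constraint forces $\delta \le 1/\sqrt n$, which one must check is compatible with $t < 1$ and the chosen $\varepsilon$) and that the scale $\varepsilon$ and perturbation size $\delta$ are chosen consistently so that all vectors fall inside $B(0,t)$ while the Gaussian-width sum remains of the advertised order; I would also double check the integrality assumptions ($n^{1/p} \in \mathbb{N}$, $\varepsilon^{-1}$ a power of two) can be arranged as in the earlier lemmas, absorbing any rounding into constants.
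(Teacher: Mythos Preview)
Your construction has a genuine gap: the vectors $x_s$ you build are not in $K$. You compute $\|x_s\|_2^2 = \delta^2 \cdot (\text{number of fine cubes in the layer})$, which means you are taking $x_s$ to be $0$ on all lattice points outside the anti-diagonal layer $\overline{\mathcal{J}}_m$ and $\pm\delta$ on it. But this violates monotonicity: a lattice point in $\overline{\mathcal{J}}_m^-$ (value $0$) can be coordinate-wise dominated by a lattice point in a layer cube carrying value $-\delta$. The parenthetical fix you propose (``shift so values live in $[-\delta,\delta]$'' or ``perturbations supported only on the layer'') does not resolve this; to keep the vector in $Q_{-1/\sqrt{n},1/\sqrt{n}}$ you are forced to set the values to $-\delta$ on all of $\overline{\mathcal{J}}_m^-$ and $+\delta$ on all of $\overline{\mathcal{J}}_m^+$, exactly as in the paper's proof. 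Once you do that, every one of the $n$ coordinates equals $\pm\delta$, so $\|x_s\|_2 = \sqrt{n}\,\delta$, forcing $\delta = t/\sqrt{n}$, and the width lower bound collapses to $(t/\sqrt{n})\cdot |W| \asymp t\, n^{1/2-1/p}$, which is the paper's bound.

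This is not just a bookkeeping slip: your claimed lower bound $w_{K,0}(t)\gtrsim t\,n^{1/2-1/(2p)}$ is strictly larger than $t\,n^{1/2-1/p}$ and would contradict the matching upper bound $w_{K,0}(t)\lesssim t\,n^{1/2-1/p}\log^4 n$ of Lemma~\ref{lemma:multivariate:isotonic:suboptimal:upper:bound}. So the ``even larger than claimed'' outcome should have flagged the error. After the correction your argument becomes essentially identical to the paper's: take the full sign vector $\Theta(\xi)$ with entries $\pm 1/\sqrt{n}$ (sign matching $\xi_i$ on the anti-chain $W$, fixed signs off it), scale by $t$, and evaluate the expectation.
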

    \begin{proof}[\hypertarget{proof:lemma:multivariate:isotonic:suboptimal:lower:bound}{Proof of Lemma \ref{lemma:multivariate:isotonic:suboptimal:lower:bound}}] The proof will be very similar to that of \citet[Proposition 5]{isotonic_general_dimensions} with some rescaling and the use of $Q_{a,b}$ instead of their more general cone. By definition $w_{K,0}(t) =\EE_{\xi\sim \cN(0,\II_n)} \sup_{q\in Q_{a,b}\cap B(0,t)}\langle \xi, q\rangle$ where $a =-b= -\frac{1}{\sqrt{n}}$. As in \citet{isotonic_general_dimensions}, define $W= \{l\in \LL_{p,n}: \sum_{j=1}^p [l]_j =1\}$, $W^{+}= \{l\in \LL_{p,n}: \sum_{j=1}^p [l]_j >1\}$, and $W^-= \{l\in \LL_{p,n}: \sum_{j=1}^p [l]_j  <1\}$, where $[l]_j$ is the $j$th component of $l\in\LL_{p,n}\subset\RR^p$. For any draw of $\xi\sim \cN(0,\II_n)$, where coordinate $i$ corresponds to $l_i\in\LL_{p,n}$,  define $\Theta(\xi)\in\RR^n$ as \[[\Theta(\xi)]_i= \begin{cases}
            1/\sqrt{n} &\text{if } l_i \in W^+ \\
             \mathrm{sgn}(\xi_i)/\sqrt{n}&\text{if } l_i\in W \\
            - 1/\sqrt{n} &\text{if } l_i \in W^-.
        \end{cases}\] Observe that $\|\Theta(\xi)\|_2 =1$, and moreover $\Theta(\xi)\in Q_{a,b}$. Thus, since $0<t<1$, we have $t\Theta(\xi)\in Q_{a,b}\cap B(0,t)$. Then \begin{align*}
            \MoveEqLeft \EE_{\xi\sim \cN(0,\II_n)} \sup_{q\in Q_{a,b}\cap B(0,t)}\langle \xi, q\rangle \\ &\ge  \EE_{\xi\sim \cN(0,\II_n)} \langle \xi,t\Theta(\xi)\rangle \\
            &= \frac{t}{\sqrt{n}}\cdot\EE_{\xi\sim \cN(0,\II_n)} \left[ \sum_{l_i\in W^+} \xi_i -\sum_{l_i\in W^-} \xi_i +\sum_{l_i\in W} |\xi_i|\right] \\
            &= \frac{t\sqrt{2}}{\sqrt{\pi n}} |W|.
        \end{align*} As stated in \citet{isotonic_general_dimensions}, $|W|\ge \left(\frac{n^{1/p}-1}{p-1}\right)^{p-1}$, so we obtain $w_{K,0}(t) \gtrsim \frac{tn^{1/2-1/p}}{(p-1)^{p-1}}.$
    \end{proof}

\begin{lemma} \label{lemma:multivariate:isotonic:suboptimal:upper:bound} Set $K= Q_{-1/\sqrt{n},1/\sqrt{n}}\subset\RR^n$. Then for $0<c,t<1$, we have $w_{K,0}(ct) \lesssim c t n^{1/2 - 1/p} \log^4(n) $. 
\end{lemma}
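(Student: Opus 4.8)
The goal is to upper bound $w_{K,0}(ct)$ where $K = Q_{-1/\sqrt n, 1/\sqrt n}$. Since $w_{K,0}(ct) \le w_{K,0}(t)$ for $c<1$, it suffices to bound $w_{K,0}(t)$ for $0<t<1$ and then absorb the factor $c$; more precisely we show $w_{K,0}(t) \lesssim t n^{1/2-1/p}\log^4 n$ which immediately gives $w_{K,0}(ct) \le w_{K,0}(t) \lesssim t n^{1/2-1/p}\log^4 n$, and if one wants the $c$ explicitly one re-runs the argument with $ct$ in place of $t$ (the bound is linear in the radius up to the entropy integral, so the $c$ comes out). The plan is to follow the Dudley entropy integral strategy used in the proof of \citet[Proposition 5]{isotonic_general_dimensions}, restricted to the ball $B(0,t)\cap K$. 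First I would write $w_{K,0}(t) = \EE \sup_{q \in B(0,t)\cap K}\langle \xi, q\rangle$ and apply Dudley's bound (\citet[Theorem 5.22]{wainwright2019high}): $w_{K,0}(t) \lesssim \int_0^{t} \sqrt{\log M(u, B(0,t)\cap K)}\, du$, using that the diameter of $B(0,t)\cap K$ is at most $2t$ so the integral can be truncated at $u \asymp t$.

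The key step is bounding the covering/packing numbers $\log M(u, B(0,t)\cap K)$. Here I would use the global metric entropy estimate for $Q_{a,b}$ established in \eqref{eq:global:entropy:multivar:isotone}: with $a = -b = 1/\sqrt n$, so $b - a = 2/\sqrt n$, Lemma \ref{multivariable_isotone_upper} gives $\log M(u, Q_{-1/\sqrt n, 1/\sqrt n}) \lesssim (u \sqrt n / (4\sqrt n))^{-2(p-1)} = (u/4)^{-2(p-1)}$ for $p > 2$, hence $\log M(u, B(0,t)\cap K) \le \log M(u, K) \lesssim u^{-2(p-1)}$. For $p = 2$ there is an extra $(\log(1/u))^2$ factor. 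But this naive bound makes the entropy integral $\int_0^t u^{-(p-1)}\,du$ diverge at $0$ for $p \ge 2$, so a more careful two-regime argument is needed: for small scales $u$ one uses instead the trivial volumetric bound $\log M(u, B(0,t)\cap K) \lesssim n\log(3t/u)$ coming from the fact that $B(0,t)\cap K \subseteq B(0,t)$ in $\RR^n$. Splitting the integral at the crossover scale $u^* $ where $u^{-2(p-1)} \asymp n$, i.e.\ $u^* \asymp n^{-1/(2(p-1))}$, the high-scale part contributes $\int_{u^*}^{t} u^{-(p-1)}\,du \lesssim (u^*)^{2-p} = n^{(p-2)/(2(p-1))}$, and one checks this is $\lesssim t n^{1/2-1/p}$ up to polylog factors (this needs $t \gtrsim n^{-1/2p}$ or similar, which holds in the regime of interest; for smaller $t$ the claimed bound should be checked directly or the statement interpreted appropriately), while the low-scale part $\int_0^{u^*}\sqrt{n\log(3t/u)}\,du \lesssim u^*\sqrt{n}\log(3t/u^*) \lesssim \sqrt n \cdot n^{-1/(2(p-1))}\log n$, which is also dominated by $t n^{1/2-1/p}\log n$ in the relevant range. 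Combining, $w_{K,0}(t) \lesssim t n^{1/2-1/p}\log^4 n$ where the $\log^4 n$ is a generous bound absorbing the $p=2$ logarithmic correction, the $\log(3t/u)$ terms, and any slack in the crossover estimate.

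The main obstacle I anticipate is the bookkeeping at the crossover scale and ensuring the final bound is genuinely $t n^{1/2-1/p}\log^4 n$ rather than $n^{1/2-1/p}\log^4 n$ without the $t$ — i.e., making the dependence on the radius $t$ come out correctly. The cleanest way to handle this is to rescale: note that $B(0,t)\cap Q_{-1/\sqrt n,1/\sqrt n}$, after dividing by $t$, is contained in $B(0,1)\cap Q_{-1/(t\sqrt n), 1/(t\sqrt n)}$, and $w$ scales linearly, so $w_{K,0}(t) = t\, w(B(0,1)\cap Q_{-1/(t\sqrt n),1/(t\sqrt n)})$; then one bounds the width of the unit-ball intersection with a monotone-function box of range $2/(t\sqrt n)$ using Dudley plus \eqref{eq:global:entropy:multivar:isotone} with $b-a = 2/(t\sqrt n)$, which gives $\log M(u, \cdot) \lesssim (u t \sqrt n/(4\sqrt n))^{-2(p-1)} = (ut/4)^{-2(p-1)}$ — wait, this needs care since the box range changes — so more directly one keeps $t$ inside and tracks it through. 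Alternatively, and most safely, I would simply mirror the proof of \citet[Proposition 5]{isotonic_general_dimensions} line by line: that proof already produces a bound of the form $w(B(0,t)\cap \text{(monotone box)}) \lesssim t n^{1/2-1/p}\log^4 n$ for the range-$O(1/\sqrt n)$ case by exactly this Dudley-plus-two-regimes computation, and the only modification is replacing their general monotone box by our $Q_{-1/\sqrt n, 1/\sqrt n}$, whose entropy is no larger. The polylog power $4$ is inherited from their statement (\citet[Proposition 5]{isotonic_general_dimensions} gives $w(K) \lesssim n^{1/2-1/p}\log^4 n$ for the full set), and localizing to $B(0,t)$ only improves things by the factor $t$.
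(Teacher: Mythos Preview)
Your main approach has a real gap, one you yourself flag but do not close: the direct Dudley integral on $B(0,ct)\cap K$ with the global bound $\log M(u,B(0,ct)\cap K)\le\log M(u,K)\lesssim u^{-2(p-1)}$ does \emph{not} produce the factor $ct$. For $p>2$ the integral $\int_{u^*}^{2ct}u^{-(p-1)}\,du$ is dominated by its lower limit and gives $\asymp (u^*)^{2-p}$, a quantity independent of $ct$. Your final assertion that ``localizing to $B(0,t)$ only improves things by the factor $t$'' is precisely the statement to be proved; for a generic convex body it is false.

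The paper's argument avoids this entirely with a single observation you are missing: the unrestricted monotone set $\mathcal{M}(\mathbb{L}_{p,n})$ from \citet{isotonic_general_dimensions} is a \emph{cone}, so $\mathcal{M}(\mathbb{L}_{p,n})\cap B(0,ct)=ct\cdot\bigl(\mathcal{M}(\mathbb{L}_{p,n})\cap B(0,1)\bigr)$ and hence $w\bigl(\mathcal{M}(\mathbb{L}_{p,n})\cap B(0,ct)\bigr)=ct\cdot w\bigl(\mathcal{M}(\mathbb{L}_{p,n})\cap B(0,1)\bigr)$. Since $K=Q_{-1/\sqrt n,1/\sqrt n}\subset\mathcal{M}(\mathbb{L}_{p,n})$, one gets $w_{K,0}(ct)\le ct\cdot w\bigl(\mathcal{M}(\mathbb{L}_{p,n})\cap B(0,1)\bigr)$, and then Proposition~5 of \citet{isotonic_general_dimensions} is quoted as a black box for the last factor. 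No Dudley computation is redone.

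Your rescaling attempt was in fact one step from this: after dividing by $t$ you land in $B(0,1)\cap Q_{-1/(t\sqrt n),1/(t\sqrt n)}$, and the point you missed is that this larger box is still contained in $\mathcal{M}(\mathbb{L}_{p,n})$, so Proposition~5 applies directly. You abandoned the right idea and retreated to the harder one.
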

    \begin{proof}[\hypertarget{proof:lemma:multivariate:isotonic:suboptimal:upper:bound}{Proof of Lemma \ref{lemma:multivariate:isotonic:suboptimal:upper:bound}}]
    In Proposition 5 of \citet{isotonic_general_dimensions}, the authors consider a cone which they denote $\cM(\LL_{p,n})$---to avoid clashing notations, we write it as $\cC(\LL_{p,n})$. Observe that \[\cC(\LL_{p,n}) \cap B(0,ct) = ct \cdot \cC(\LL_{p,n})\cap B(0,1).\] Thus, $w(\cC(\LL_{p,n})\cap B(0,ct))= ct\cdot w(\cC(\LL_{p,n})\cap B(0,1))$. By \citet[Proposition 5]{isotonic_general_dimensions}, we have \[w(\cC(\LL_{p,n}))\cap B(0,ct)) \lesssim c t n^{1/2-1/p}\log^4 n.\]
    But $K\cap B(0,ct)$ is a subset of $\cC(\LL_{p,n})\cap B(0,ct),$ so \begin{align*}
            w_{K,0}(ct)  \lesssim c t n^{1/2-1/p}\log^4 n.
        \end{align*} 
    \end{proof}

    \begin{proof}[\hypertarget{proof:lemma:solid:revolution}{Proof of Lemma \ref{lemma:solid:revolution}}]
        We define 4 sets and relate their Gaussian widths. Let $T_1 =B(0,b/4)\cap K$ and $T_2 = \bigcup_{x\in[0,b/4]}(\{x\}\times B_x)$. Let $B^{n}(s,r)$ mean the $n$-dimensional ball centered at $s$ and with radius $r$. Then let $T_3 = B^{n}(b/2 e_1, f(b/2))\cap K$ and $T_4 = B^{n}(b/2e_1, b/4)\cap K$ where $e_1=(1,0,\dots,0)\in\RR^n$.

First, observe that $T_1\subseteq T_2$. To see this, note that if $u\in T_1$, then $u\in K$ so that $u\in \{x\}\times B_{x}$ for some $x\in[0,b]$. This means the first coordinate of $u$ is $x$. Hence $x\le \|u\|_2 \le b/4$ using that $u\in B(0,b/4)$, so indeed $u\in T_2$. Hence $w(T_1)\le w(T_2)$.

Now let us compute $w(T_2)$. As $T_2$ is convex, for a fixed Gaussian vector $g$, the solution to $\arg\sup_{x\in T_2}\langle x,g\rangle$ will lie on an extreme point of $T_2$, i.e., $x = (c\cdot b/4)\cdot e_1  + f(c\cdot b/4)\cdot  u$ for some $c\in[0,1]$ and unit vector $u\in e_1^{\perp}$. Thus, letting $y_+:=\max(0,y)$, we have \begin{align*}
    w(T_2) &= \EE_g \sup_{x\in T_2}\langle x,g\rangle \\ &=\EE_g \sup_{\substack{c\in[0,1] \\ u\in e_1^{\perp}, \|u\|_2=1}}\langle (c\cdot b/4)\cdot e_1  + f(c\cdot b/4)\cdot u,g\rangle  \\
    &\le \EE_g \sup_{u\in e_1^{\perp}, \|u\|_2=1} [(b/4)\langle e_1, g\rangle_{+}+ f(b/4)\langle u,g\rangle_+ ]\\
    &\le  (b/4)\cdot \EE_g \langle e_1, g\rangle_{+}+ f(b/4)\cdot  \EE_g \sup_{u\in e_1^{\perp}, \|u\|_2=1} \langle u,g\rangle_+   \\
    &\le \frac{b}{4\sqrt{2 \pi}} + f(b/4)\sqrt{n-1} \\
    &\le  f(b/2)\sqrt{n-1}.
\end{align*} 
In the fourth line, we note that $\sup_{u\in e_1^{\perp}, \|u\|_2=1}\langle u, g\rangle_{+}$ is achieved with $u=(0,g_{-1}/\|g_{-1}\|)$, where $g_{-1}$ is the random vector in $\RR^{n-1}$ obtained from removing the first coordinate of $g$. Hence $\EE_g\sup_{u\in e_1^{\perp}, \|u\|_2=1}\langle u, g\rangle_{+} = \EE_g\|g_{-1}\|_2\le \sqrt{n-1}$. In the same line, we also used that $\EE_g\langle e_1, g\rangle_{+} = \EE\max(0,g_1)=\tfrac{1}{\sqrt{2\pi}}$ \citep[see][Appendix A]{beauchamp_recitifed_Gaussian}. The final inequality was from our assumption \eqref{eq:solid:revolution:secant} about the secant line. 

Proceeding to $T_3$, note this set has width at least $\sqrt{n-1}\cdot f(b/2)$ up to constants by \citet[Example 7.5.7]{vershynin2018high} since it contains a $(n-1)$-dimensional ball of radius $f(b/2)$. Note that $T_3\subset T_4$ since $b/4>f(b/2)$. Putting our results together, we obtain \begin{align*}
    w_{K,0}(b/4) &= w(T_1)\le w(T_2) \\ &\le \frac{b}{4\sqrt{2 \pi}} + f(b/4)\sqrt{n-1} \\ &\le  f(b/2)\sqrt{n-1}\\ &\lesssim w(T_3) \le w(T_4) = w_{K,b/2\cdot e_1}(b/4).
\end{align*} This means \begin{align*}
    \MoveEqLeft w_{K,b/2\cdot e_1}(b/4) - w_{K,0}(b/4) \\ &\ge  f(b/2)\sqrt{n-1} - \left(\frac{b}{4\sqrt{2 \pi}} + f(b/4)\sqrt{n-1} \right).
\end{align*}

For sufficiently large $n$ and any constant $\overline{C}>0$, we have \[f(b/2) \cdot\sqrt{n-1}- \frac{b}{4\sqrt{2 \pi}} - f(b/4)\cdot\sqrt{n-1} \ge \overline{C}b^2,\] e.g., take $n \ge 1+ \left(\frac{\overline{C}b^2+b/(4\sqrt{2\pi})}{f(b/2)-f(b/4)}\right)^2$. This implies $w_{K,b/2\cdot e_1}(b/4)-w_{K,0}(b/4) \ge\overline{C}b^2$. Take $\sigma=1$. Now consider the inequality  defining $\overline{\varepsilon}(\sigma)$ in Theorem \ref{Lipschitz:map:theorem}. Taking $\nu_1 = (b/2)\cdot e_1$, $\nu_2=0$, $\varepsilon=b c^{\ast}/4$,  and choosing $\overline{C}$ sufficiently large, we can show that $\varepsilon$ satisfies the condition in the definition of $\overline{\varepsilon}(\sigma)$, hence $\overline{\varepsilon}(\sigma)\ge \varepsilon\asymp b$. Noting that $\overline{\varepsilon}(\sigma)\gtrsim b\gtrsim \sigma$, we conclude $\epsLSE \gtrsim \overline{\varepsilon}(\sigma) \gtrsim b$ by the theorem.
    \end{proof}

    \begin{proof}[\hypertarget{proof:lemma:ellipse:prelude}{Proof of Lemma \ref{lemma:ellipse:prelude}}]
    We first show that if $\sigma\gtrsim d^2/w(K)$, then for all $y\in K$, we have $w_y(\epswidth[y])\gtrsim w(K)$. By Jung's theorem, setting $\kappa_n = \sqrt{\frac{n}{2(n+1)}}\asymp 1$, for some $\mu\in K$ we have $K\subset B(\mu, d\kappa_n)$. Using our Lipschitz result\footnote{This can be rearranged to the form $w_{\nu}(\varepsilon)\ge \frac{w_{\mu}(\varepsilon)}{1+\|\mu-\nu\|/\varepsilon}$.} in \eqref{remark:Lipschitz:result} and that $\|y-\mu\|\le d\kappa_n$, we have \begin{align*}
        w_{y}(d\kappa_n) &\ge \frac{w_{\mu}(d\kappa_n)}{1+\tfrac{\|y-\mu\|}{d\kappa_n}} \ge  \frac{w_{\mu}(d\kappa_n)}{2} = \frac{w(K)}{2}.
    \end{align*} Then by definition of $\epswidth[y]$, \begin{align*}
        w_y(\epswidth[y])-\frac{\epswidth[y]^2}{2\sigma} &\ge w_{y}(d\kappa_n) - \frac{(d\kappa_n)^2}{2\sigma} \\ &\ge \frac{w(K)}{2} - \frac{(d\kappa_n)^2}{2\sigma}.
    \end{align*} Hence, for $\sigma\gtrsim d^2/w(K)$ we have $w_y(\epswidth[y])\gtrsim w(K)$. 

 Next, recall we chose $x\in\operatorname{bd}K$ that minimizes $\|\nabla G(x)\|_2$ and set $x^{\ast} =  \frac{\nabla G(x)}{\|\nabla G(x)\|_2}$. We now upper bound $w_x(\epswidth[x])$, i.e., \begin{align*}
     \EE\sup_{y\in K\cap B(x,\epswidth[x])}\langle \xi, y\rangle &= \EE\sup_{y\in K\cap B(x,\epswidth[x])}\langle \xi, y-x\rangle \\ &= \EE\sup_{z\in K'} \langle \xi, z\rangle 
 \end{align*} where $K'=\{z: z+x\in K, \|z\|\le \epswidth[x]\}$. Now write $\xi=\sum_i \alpha_i \mu_i$ and $z=\sum_i \alpha_i'\mu_i$ where $\mu_i$ are the ortho-normalized eigenvectors of $\tilde{M}/2$. Using \eqref{strong:convexity:separation}, observe that that $K'$ is a subset of $K''=\{z:\|z\|\le \epswidth[x], -\langle x^{\ast},z\rangle\ge z^T\tilde{M}z /2\}$. Thus, indexing $K''$ by $\alpha'$ in the supremum and using the Cauchy-Schwarz inequality, we have \begin{align*}
    w_x(\epswidth[x]) &=  \EE\sup_{z\in K'}\langle \xi, z\rangle \le \EE\sup_{z\in K''} \langle \xi,z\rangle = \EE_{\xi} \sup_{\alpha'} \sum_i \alpha_i \alpha_i' \\ &\leq \EE_{\xi}\sup_{\alpha'} \sqrt{\sum \alpha_i'^2 \lambda_i} \sqrt{\sum \alpha_i^2/\lambda_i} \\ &\leq \sup_{\alpha'} \sqrt{\sum \alpha_i'^2 \lambda_i} \sqrt{\EE \sum \alpha_i^2/\lambda_i} \\
    &= \sup_{\alpha'} \sqrt{\sum \alpha_i'^2 \lambda_i} \sqrt{ \sum 1/\lambda_i}.
\end{align*} The final two lines used Jensen's inequality and then that $\EE_{\xi}\alpha_i^2=1$ as the $\alpha_i$ are standard Gaussian random variables.

On the other hand $-\langle x^*, z\rangle \geq z\T \tilde M z /2$ is equivalent to $\sum \alpha_i'^2 \lambda_i \leq -\sum \alpha_i' \langle \mu_i, x^*\rangle$, while $\|z\|\leq \epswidth[x]$ is equivalent to $\sum \alpha_i'^2 \leq \epswidth[x]^2$. Thus by another application of Cauchy-Schwarz we get
\begin{align*}
    w_x(\epswidth[x]) &\leq  \sup_{\alpha'} \sqrt{-\sum \alpha_i' \langle \mu_i, x^*\rangle} \sqrt{ \sum 1/\lambda_i}\\
    & \leq \sqrt{\sqrt{\sum \alpha_i'^2} \sqrt{\sum \langle \mu_i, x^*\rangle^2}} \sqrt{ \sum 1/\lambda_i}\\
    & \leq \sqrt{\epswidth[x]}\sqrt{ \sum 1/\lambda_i}.
\end{align*} 

This shows $w(K)\lesssim w_x(\epswidth[x]) \le\sqrt{\epswidth[x]}\sqrt{ \sum 1/\lambda_i}$ provided $\sigma \gtrsim d^2/(w(K))$, and therefore that $\epswidth[x]\gtrsim w^2(K)/\sum (1/\lambda_i)$. So taking the supremum over $x\in K$, we have $\epsSupwidth(\sigma) \gtrsim w^2(K)/\sum (1/\lambda_i)$ for $\sigma \gtrsim d^2/(w(K))$.
    \end{proof}

    \begin{proof}[\hypertarget{proof:ellipsoid:lemma}{Proof of Lemma \ref{ellipsoid:lemma}}]

   In Remark \ref{remark:Lipschitz}, we found for each $\mu,\nu\in K$ that $|w_{\mu}(\varepsilon)-w_{\nu}(\varepsilon)| \le \|\mu-\nu\|\cdot \frac{w_{\mu}(\varepsilon)\wedge w_{\nu}(\varepsilon)}{\varepsilon}$. Specializing to $\mu =0$, we obtain $w_\nu(\varepsilon) \geq w_0(\varepsilon)/(\|\nu\|/\varepsilon + 1)$. Now pick $\nu = (0, \ldots, 1/d_{n-k}, 0,\ldots,0)$, where all but coordinate $n-k$ is zero. Set $\delta = 1/d_{n-k}$ so that $\|\nu\|/\delta=1$, and set $\varepsilon=\epswidth[\nu]$ as defined in \eqref{equation:varepsilon:mu}. Then we have $w_\nu(\delta) \geq w_0(\delta)/2.$
Hence by taking $ \sigma \gtrsim \frac{\delta^2}{w_0(\delta)}$ we obtain 
\begin{align*}
    w_{\nu}(\epswidth[\nu]) - \frac{\epswidth[\nu]^2}{2\sigma} &\ge w_\nu(\delta) - \frac{\delta^2}{2\sigma} \geq  \frac{w_0(\delta)}{2} -  \frac{\delta^2}{2\sigma} \gtrsim w_0(\delta).
\end{align*} The first inequality used the definition of $\epswidth[\nu]$. For such $\sigma$, we have \begin{align} \label{eq:ellipse:wnu:bigger:w0}
    w_\nu(\epswidth[\nu]) \gtrsim w_0(\delta).
\end{align}
    
    Now we will attempt to control $w_\nu(\epswidth[\nu])$ from above following a similar logic to before. We need to calculate $\EE \sup_{y\in K'} \langle \xi, y \rangle = \EE \sup_{y\in K'} \langle \xi, y - \nu\rangle$ for $K'=\{y\in K:\|\nu - y\| \leq \epswidth[\nu]\}$. Note that $\nu$ is a boundary point of $K$ since $\|D\nu\|_2^2 = 1$.  Set $x^* = \frac{\nabla G(\nu)}{\|\nabla G(\nu)\|}$ with $G(\nu) = \|D \nu\|_2^2$, $M=2D^2$, $\tilde M = M/\|\nabla G(\nu)\|$. Also, $\nabla G(\nu) = 2 D^2 \nu = (0,0,\ldots, 2 d_{n-k},0,\ldots, 0)$, $x^* = (0,0,\ldots,1, 0, 0, \ldots, 0)$, and $\tilde M/2$ is the diagonal matrix with element $d_i^2/(2 d_{n-k})$ in entry $(i,i)$. Using \eqref{taylor:expansion:convex:G}, $K'$ is a subset of \[B(\nu,\epswidth[\nu]) \cap \Big\{y\in K:\langle x^*, \nu - y \rangle \geq \frac{(\nu-y)\T \tilde M (\nu-y)}{2} \Big\}.\] 

Let $y-\nu = z$. We need to optimize
$\langle \xi, z \rangle$ given that $\|z\| \leq \epswidth[\nu]$, $-\langle x^*, z\rangle \geq z\T \tilde M z /2$. We have by the Cauchy-Schwarz inequality that
\begin{align*}
    \EE \sup_z \langle \xi, z \rangle &=\EE \sup_z \left[ \sum_{i=n-k}^n \xi_i z_i+\sum_{i = 1}^{n-k-1} \xi_i z_i \right] \\ &\leq  \epswidth[\nu]\sqrt{k + 1} + \EE \sqrt{\sum_{i = 1}^{n-k-1} \!\frac{\xi_i^2}{\lambda_i}}\sqrt{\sum_{i = 1}^{n-k-1}\!\!\! z_i^2 \lambda_i},
\end{align*}
where the $\lambda_i = d_i^2/(2 d_{n-k})$ are the eigenvalues of $\tilde M/2$. Bringing the expectation inside the square root by Jensen's inequality and noting that $\EE \xi_i^2 = 1$ we get
\begin{align*}
    \EE \sup_z \langle \xi, z \rangle \leq \epswidth[\nu] \sqrt{k + 1} + \sqrt{\sum_{i = 1}^{n-k-1}\!\frac{ 2 d_{n-k}}{d_i^2}}\sqrt{\sum_{i = 1}^{n-k-1} \!\! z_i^2 \lambda_i}.
\end{align*}
Observe that \begin{align*}
    \sum_{i = 1}^{n-k-1} z_i^2 \lambda_i &\leq \sum_{i = 1}^{n} z_i^2 \lambda_i =z^T(\tilde M/2)z \\ &\leq -2 \langle x^*, z \rangle \leq 2\epswidth[\nu]. 
\end{align*} Recalling \eqref{eq:ellipse:wnu:bigger:w0} (which requires $\sigma\gtrsim \delta^2/w_0(\delta)$) and combining the previous two lines,
\begin{align}
    w_{0}(\delta)&\lesssim w_{\nu}(\epswidth[\nu])\le \EE \sup_z \langle \xi, z \rangle \notag \\ &\leq \epswidth[\nu] \sqrt{k + 1}  + \sqrt{\sum_{i = 1}^{n-k-1}\frac{ 2 d_{n-k}}{d_i^2}} \cdot\sqrt{2\epswidth[\nu]}.  \label{ellipse:gaussian_width_upper_bound:0}
\end{align}
Now we note several facts. The set $B(0,\delta)\cap K$ is all $x$ such that $\|D x\|_2 \leq 1$, and $\|x\| \leq \delta$. Let $D'$ be the diagonal matrix with $(i,i)$ entry $d_i'$, where  where $d_i' = d_i$ for $i \leq n-k-1$ and $d_i' = 1/\delta$ for $i \ge n-k$. If $\|D'x\|\leq 1$  then $\|D x\|_2 \leq 1$.  Moreover, $\|D'x\|\leq 1$ implies $\|x\|\le \delta$.  Thus using \citet[Exercise 5.9]{wainwright2019high} to compute the Gaussian width, we obtain \begin{align}\label{w0:bigger}
    w_0(\delta)  &= w(B(0,\delta)\cap K) \ge w(\{x:\|D'x\|\le 1\}) \notag \\ &\gtrsim \sqrt{\sum_{i = 1}^n \frac{1}{d_i'^2}}  = \sqrt{\sum_{i=1}^{n-k-1}\frac{1}{d_i^2} +\sum_{i=n-k}^n \delta^2  }\notag \\ &\gtrsim \delta\sqrt{k + 1} + \sqrt{\sum_{i = 1}^{n-k-1} \frac{1}{d_i^2}},
\end{align} where in the last step we used that $\sqrt{a + b} \geq (\sqrt{a} + \sqrt{b})/\sqrt{2}$ for $a,b\ge 0$.


Combining this result with our upper bound on $w_0(\delta)$ from \eqref{ellipse:gaussian_width_upper_bound:0}, we conclude   
\begin{align*}
    \MoveEqLeft \epswidth[\nu] \sqrt{k + 1}  + \sqrt{\sum_{i = 1}^{n-k-1}\frac{ 4\epswidth[\nu] d_{n-k}}{d_i^2}}  \\ &\gtrsim  \delta\sqrt{k + 1} + \sqrt{\sum_{i = 1}^{n-k-1}\frac{1}{d_i^2}}.
\end{align*}
From the above we have that either $ \epswidth[\nu]\sqrt{k + 1} \gtrsim \delta\sqrt{k + 1}$ or $\sqrt{\sum_{i = 1}^{n-k-1}\frac{ 4\epswidth[\nu] d_{n-k}}{d_i^2}} \gtrsim \sqrt{\sum_{i = 1}^{n-k-1} \frac{1}{d_i^2}}$; in the first case, we conclude that $\epswidth[\nu] \gtrsim \delta$; in the second case we obtain $\epswidth[\nu] \gtrsim 1/d_{n-k} = \delta$. Hence $\epsSupwidth(\sigma)\ge \epswidth[\nu](\sigma)\gtrsim \delta_{n-k}$ for $\sigma\gtrsim \delta_{n-k}^2/w_0(\delta_{n-k})$, as claimed.

Next, suppose the LSE is minimal optimal for $K$ for all $\sigma$. Take $\sigma\asymp \delta^2/w_0(\delta)$ using the same $\delta=\delta_{n-k}$. First, suppose $\delta \gtrsim \sigma$ for a sufficiently large constant, which means $\epswidth[\nu]\gtrsim \delta\gtrsim\sigma$ using our previous result, where $\nu$ is as chosen above. By Lemma \ref{lemma:chatterjee:analogue}, we have $\EE\|\hat\mu-\nu\|_2^2\gtrsim \epswidth[\nu]^2$.

Using \eqref{varepsilon:star:def}, observe that the minimax rate $\varepsilon^{\ast}$ for this value of $\sigma$  satisfies
\begin{align*}
 \varepsilon^{*2}\delta^{-4} w^2_0(\delta)\asymp \varepsilon^{*2}/\sigma^2  \lesssim \log \cMloc(\varepsilon^*).
\end{align*} 
Furthermore, if the LSE is minimax optimal, we have ${\varepsilon^*}^2\gtrsim \EE\|\hat\mu-\nu\|_2^2 \gtrsim \epswidth[\nu]^2 \gtrsim \delta ^2$. Hence that $\log \cMloc(\varepsilon^*)\le \log \cMloc(c\delta)$ for a sufficiently small absolute constant $c$ since $\varepsilon\mapsto \log \cMloc(\varepsilon)$ is non-increasing. And also $\varepsilon^{*2}\delta^{-4} w^2_0(\delta) \gtrsim \delta^{-2}w^2_0(\delta)$. Therefore, we obtain
\begin{align*}
    \delta^{-1}w_0(\delta) \lesssim \sqrt{\log \cMloc(c\delta)}.
\end{align*}

We now assume $\delta \lesssim \sigma$, i.e., $\delta\lesssim \delta^2/w_0(\delta)$ for a sufficiently large absolute constant. Rearranging, $w_0(\delta)/\delta \lesssim 1 \leq \sqrt{\log \cMloc(c\delta)}$ since $\delta \leq d$ for some $c < 1$. 
\end{proof}

    \begin{proof}[\hypertarget{proof:lemma:strongly:convex:body}{Proof of Lemma \ref{lemma:strongly:convex:body}}]
        Note that if $\sigma \gtrsim d$, then by Lemma \ref{minimax:bound:versus:sigma:and:d} the minimax rate satisfies $\varepsilon^{\ast}\gtrsim d$ which implies the LSE satisfies $\epsLSE\gtrsim d$. Now consider the case where $\sigma \lesssim d$.

        We fix $c > 4$, $\kappa\in (1/c,1-2/c)$, and pick any $\varepsilon$ such that $d\lesssim\varepsilon < \tfrac{d}{2}\wedge \tfrac{d}{2(\kappa+1/c)}$. Take a point $\mu \in K$ which is a midpoint of a diameter. Pick $\nu', \nu'' \in K$ such that $\|\mu - \nu'\| = \|\mu - \nu''\| = \kappa \varepsilon < d/2$ and $\mu = \frac{\nu'+\nu''}{2}$. Note that $\|\nu'-\nu''\|=2\kappa\varepsilon$. Define for some fixed $\xi \sim \cN(0, \II_n)$ the points \begin{align*}
            u' &= \argmax_{x \in B(\nu', \varepsilon/c) \cap K} \langle \xi, x \rangle, \quad u'' = \argmax_{x \in B(\nu'', \varepsilon/c) \cap K} \langle \xi, x \rangle
        \end{align*} in $K$. Apply strong convexity to $u',u''$ to conclude $B((u' + u'')/2, k \|u' - u''\|_2^2/4) \subset K$. Then we repeatedly use $u'\in B(\nu',\varepsilon/c)$ and $u''\in B(\nu'',\varepsilon/c)$ along with the triangle inequality to obtain
\begin{align}
    2  \kappa\varepsilon + 2\varepsilon/c &\geq \overbrace{\|\nu' - \nu''\|}^{= 2\kappa\varepsilon} + \overbrace{\|u' - \nu'\|}^{\le \varepsilon/c} + \overbrace{\|u'' - \nu''\|}^{\le \varepsilon/c}  \label{eq:strong:convexity:upper}\\
    &\geq \|\nu' - \nu'' + u' - \nu' + \nu''-u''\| \notag \\
    & = \|u' - u''\| \notag \\
    & \geq \|\nu' - \nu''\| - \|\nu' -u'\| - \|u''-\nu'' \| \notag  \\ &\ge 2 \varepsilon \kappa - 2\varepsilon/c. \label{eq:strong:convexity:lower}
\end{align}
Consider the point $u = (u' + u'')/2 + (\xi/\|\xi\|)\cdot k \|u'-u''\|^2_2/4$. Then \begin{align*}
    \|u- \mu\| &\leq \tfrac{1}{2}\|u' - \nu'\| + \tfrac{1}{2} \|u'' - \nu''\| +\tfrac{1}{4} k \|u'-u''\|^2_2 \\ &\leq \tfrac{1}{2}\cdot \tfrac{\varepsilon}{c} + \tfrac{1}{2}\cdot \tfrac{\varepsilon}{c} +k(\varepsilon \kappa + \tfrac{\varepsilon}{c})^2 \\&= \tfrac{\varepsilon}{c}  + k(\varepsilon \kappa + \tfrac{\varepsilon}{c})^2 \\ &\leq \tfrac{\varepsilon}{c}  + 2d^{-1}(\varepsilon \kappa + \tfrac{\varepsilon}{c})^2 \\
    &= \tfrac{\varepsilon}{c}+ (\varepsilon\kappa+\tfrac{\varepsilon}{c})\cdot \tfrac{2\varepsilon(\kappa+1/c)}{d} \\
    &< \tfrac{\varepsilon}{c}+ \varepsilon\kappa+\tfrac{\varepsilon}{c} \\
    &= \varepsilon(\kappa+\tfrac{2}{c})<1.
\end{align*} The first line used the triangle inequality and that $\mu = \tfrac{\nu'+\nu''}{2}$. The second line used $u'\in B(\nu',\varepsilon/c)$ and $u''\in B(\nu'',\varepsilon/c)$ along with our result from \eqref{eq:strong:convexity:upper}. The fourth line used $k<2d^{-1}$ as we established in the main text. The final two lines used $\varepsilon < \frac{d}{2(\kappa+1/c)}$ followed by $\kappa + 2/c < 1$. Hence $u\in B(\mu,\varepsilon)\cap K$. 

Then using that $u\in B(\mu,\varepsilon)\cap K$ along with \eqref{eq:strong:convexity:lower}, we have
\begin{align*} 
    \sup_{x \in B(\mu, \varepsilon)\cap K} \langle \xi,x \rangle &\geq \langle \xi, u \rangle \\ 
    &= \langle\xi, \tfrac{u' + u''}{2}\rangle + \tfrac{1}{4} k\|\xi\| \|u'-u''\|^2_2 \\ 
    &\geq \tfrac{1}{2}\langle\xi, u'\rangle + \tfrac{1}{2}\langle\xi, u''\rangle+ k \|\xi\| (\varepsilon \kappa - \varepsilon/c)^2.
\end{align*}
Taking expectation with respect to $\xi$ we have
\begin{align*}
    w_{\mu}(\varepsilon) &\geq \tfrac{1}{2} \cdot w_{\nu'}(\varepsilon/c) + \tfrac{1}{2} \cdot w_{\nu''}(\varepsilon/c) + \EE \|\xi\| k (\varepsilon \kappa - \varepsilon/c)^2 \\
    &\gtrsim \inf_{x\in B(\mu,\varepsilon)\cap K}w_{x}(\varepsilon/c) + \sqrt{n} k \varepsilon^2(\kappa - 1/c)^2.
\end{align*} The first line again used  $u'\in B(\nu',\varepsilon/c)$ and $u''\in B(\nu'',\varepsilon/c)$, and the second that $\nu',\nu''\in B(\mu,\kappa\varepsilon)\subset B(\mu,\varepsilon)$ along with   $\EE \|\xi \| \gtrsim \sqrt{n}$ (see \citet[Section 3.1]{chandrasekaran2012convex}).

Now assume $\sigma\asymp (k\sqrt{n})^{-1}$  which implies $\sigma\gtrsim d/\sqrt{n}$ since $k<4d^{-1}$. This implies \[w_{\mu}(\varepsilon)\gtrsim  \inf_{x\in B(\mu,\varepsilon)\cap K}w_{x}(\varepsilon/c)   +\overline{C}\varepsilon^2/2\sigma\] for some constant $\overline{C}>0$. Then by definition of $\overline{\varepsilon}(\sigma)$ in Theorem \ref{big:width:minus:small:width:thm} we conclude $\overline{\varepsilon}(\sigma) \gtrsim \varepsilon \asymp d$. As we have $d\gtrsim\sigma$, we have $\overline{\varepsilon}(\sigma)  \gtrsim \sigma$ which by Theorem \ref{big:width:minus:small:width:thm} implies $\epsLSE(\sigma)\gtrsim \overline{\varepsilon}(\sigma)   \gtrsim d$. Then note that $\epsLSE(\sigma)\le d$ always holds to deduce our claim.
    \end{proof}

\section{Proofs for Appendix \ref{section:algorithm_proofs}}

\begin{lemma} \label{lemma:local:packing:algorithm} Let $\epsSupwidth(\sigma)=\sup_{\mu\in K}\epswidth(\sigma)$ where $\epswidth$ is as defined in \eqref{equation:varepsilon:mu}. Set $c'=\frac{(2c^{\ast}-4)(4c^{\ast}-1/c^{\ast})}{(c^{\ast}-4)c^{\ast}}$ for some $c^{\ast}>4$. \renewcommand\labelenumi{(\theenumi)}
\renewcommand{\theenumi}{\roman{enumi}}
\begin{enumerate}
    \item If Algorithm \ref{algo_local_packing} terminates after $k$ iterations of the \textbf{while} loop, it returns $\varepsilon = \frac{d}{2^{k-1}c^{\ast}}$ and satisfies $\epsSupwidth(\sigma)\ge \varepsilon$.
    \item If Algorithm \ref{algo_local_packing} does not terminate within $k$ iterations, then we have $\epsSupwidth(\sigma) \leq c'\cdot \frac{d}{2^k}$.
\end{enumerate}  
\end{lemma}   
    \begin{proof}[\hypertarget{proof:lemma:local:packing:algorithm}{Proof of Lemma \ref{lemma:local:packing:algorithm}}]
        Suppose that Algorithm \ref{algo_local_packing} has terminated on the $k$th level of the infinite tree, i.e., has returned $d/(2^{k-1}c^{\ast})$. Then for some $\mu,\nu\in K$ such that $\|\mu-\nu\| \leq d/2^{k-1}$, we have
\begin{align*}
   w_\mu\left(\frac{d}{2^{k-1}}\right)   - w_\nu\left(\frac{d}{2^{k-1}c^{\ast}}\right) \geq \frac{C (\tfrac{d}{2^{k-1}})^2}{2\sigma}.
\end{align*} Since $B(\mu, d/2^{k-1}) \subseteq B(\nu, d/2^{k-2})$, \begin{align*}
    w_\nu\left( \frac{d}{2^{k-2}}\right)  &\geq w_\mu\left(\frac{d}{2^{k-1}}\right) \\ &\geq w_\nu\left(\frac{d}{2^{k-1}c^{\ast}}\right)  +  \frac{C (\tfrac{d}{2^{k-1}})^2}{2\sigma}.
\end{align*} Then since $C = 4 - \frac{1}{{c^{\ast}}^2}$, rearranging yields \[w_\nu\left( \frac{d}{2^{k-2}}\right)  - \frac{(\tfrac{d}{2^{k-2}})^2}{2\sigma} \ge w_\nu\left(\frac{d}{2^{k-1}c^{\ast}}\right)  - \frac{(\tfrac{d}{2^{k-1}c^{\ast}})^2}{2\sigma}.\] By our usual concavity argument with $\varepsilon\mapsto w_{\nu}(\varepsilon)-\varepsilon^2/2\sigma$ and that $d/2^{k-2}\ge d/(2^{k-1}c^{\ast})$, we have $\epsSupwidth(\sigma) \geq \epswidth[\nu](\sigma) \ge d/(c^{\ast}2^{k-1})$. This proves (i).

We will now prove (ii). Suppose the algorithm has run for at least $k+1$ steps. This means for any level $j \leq k $ of the tree we haven't found any points that satisfy the given inequality. Take any $\mu \in K$. Recall that at the $(k-1)$th step of the algorithm, we formed $d/(c^{\ast}2^{k-2})$-packing sets of $B(\cdot, d/2^{k-2})\cap K$, whose elements populate the $k$th level of the tree. Note that at any level $j$ of the tree, the union of the balls of radius $d/2^{j-1}$ cover $K$. To prove this, repeat the induction argument in the proof of Theorem \ref{important:thm}. Hence $\mu$ belongs to one of the $B(\cdot, d/2^{k-2})\cap K$ and thus there is some  $\nu\in K$ from one of the packing sets satisfying $\|\mu-\nu\|_2\le d/(c^{\ast}2^{k-2}) \le d/2^{k-1}$. The packing process at level $k$ then forms a $d/(c^{\ast}2^{k-1})$-packing of $B(\nu,d/2^{k-1})\cap K$, so we may pick some $\nu'\in K$ such that $\|\nu' - \mu\|_2\le d/(c^{\ast}2^{k-1})$.

Then $\Phi \le T$ for this particular packing of $B(\nu,d/2^{k-1})\cap K$, i.e.,
\begin{align} \label{eq1:local:packing:algorithm}
    w_\nu\left(\frac{d}{2^{k-1}}\right)  - w_{\nu'}\left( \frac{d}{c^{\ast}2^{k-1}}\right) \leq  \frac{C(d/2^{k-1})^2}{2\sigma}.
\end{align}
But since $B(\mu,d/2^{k-1}-  d/(c^{\ast}2^{k-2}))\subseteq B(\nu, d/2^{k-1})$, we have \begin{align} \label{eq2:local:packing:algorithm}
    w_\mu\left(\frac{d}{2^{k-1}}-  \frac{d}{c^{\ast}2^{k-2}}\right) \le w_\nu\left( \frac{d}{2^{k-1}}\right).
\end{align} Next, since $B(\nu',d/(c^{\ast}2^{k-1}))\subseteq B(\mu, d/(c^{\ast}2^{k-2}))$, \begin{align}\label{eq3:local:packing:algorithm}
    w_{\nu'}\left( \frac{d}{c^{\ast}2^{k-1}}\right)  \leq w_\mu\left( \frac{d}{c^{\ast}2^{k-2}}\right).
\end{align} Combining \eqref{eq1:local:packing:algorithm}, \eqref{eq2:local:packing:algorithm}, and \eqref{eq3:local:packing:algorithm}, we have 
\begin{align*}
   w_\mu\left(\frac{d}{2^{k-1}}-  \frac{d}{c^{\ast}2^{k-2}}\right)- w_\mu\left( \frac{d}{c^{\ast}2^{k-2}}\right)  \leq  \frac{C(d/2^{k-1})^2}{2\sigma}.
\end{align*}

Now define $\sigma' = \left(\frac{c^{\ast}-4}{Cc^{\ast}}\right)\cdot \sigma = \frac{c^{\ast}-4}{4c^{\ast} - 1/c^{\ast}}\cdot \sigma$. Rearranging, we obtain that \begin{align*}
   \MoveEqLeft w_\mu\left(\frac{d}{2^{k-1}}-  \frac{d}{c^{\ast}2^{k-2}}\right) - \frac{(d/2^{k-1}-  d/(c^{\ast}2^{k-2}))^2}{2\sigma'} \\ &\leq  w_\mu\left( \frac{d}{c^{\ast}2^{k-2}}\right) - \frac{(d/(c^{\ast}2^{k-2}))^2}{2\sigma'}.
\end{align*}


Note that $d/2^{k-1}-  d/(c^{\ast}2^{k-2}) > d/(c^{\ast}2^{k-2})$ since $c^{\ast} >4$. Consequently, by our same concavity argument as before but with $\sigma'$, we conclude $ \epswidth(\sigma')\le d/2^{k-1}-  d/(c^{\ast}2^{k-2})$. Since $\mu\in K$ was chosen arbitrarily, taking the supremum we conclude \[\epsSupwidth(\sigma') \le  \frac{d}{2^{k-1}} -  \frac{d}{c^{\ast}2^{k-2}}= \frac{d(2c^{\ast}-4)}{c^{\ast}2^k}.\]

 Now, observe that $\sigma'=c\sigma$ where $c=\frac{c^{\ast}-4}{4c^{\ast} - 1/c^{\ast}}<1$. Hence by Lemma \ref{lemma:epsilon_mu:nondecreasing}, we have \begin{align*}
     \epsSupwidth(\sigma) &\le c^{-1}\epsSupwidth(c\sigma)=c^{-1}\epsSupwidth(\sigma')\\ &\le c^{-1}\cdot \left[\frac{d(2c^{\ast}-4)}{c^{\ast}2^k}\right] \\ &= \frac{(2c^{\ast}-4)(4c^{\ast}-1/c^{\ast})}{(c^{\ast}-4)c^{\ast}}\cdot \frac{d}{2^k}.
 \end{align*}
    \end{proof}

    \begin{proof}[\hypertarget{proof:theorem:local:packing:algorithm}{Proof of Theorem \ref{theorem:local:packing:algorithm}}]
        Suppose the algorithm terminates after $k$ iterations. By Lemma \ref{lemma:local:packing:algorithm}, this implies  $\epsSupwidth(\sigma)\ge d/(2^{k-1}c^{\ast})$. Consider two cases.
        
        First, suppose  $\sigma \lesssim d/(2^{k-1}c^{\ast})$. Then for some $\mu \in K$, we have $\epswidth(\sigma) \ge d/(2^{k-1}c^{\ast})\gtrsim\sigma$, so by Lemma \ref{lemma:chatterjee:analogue}, \[\EE\|\hat{\mu}-\mu\|_2^2\asymp\epswidth(\sigma)^2\gtrsim d^2/(2^{k-1}c^{\ast})^2.\] Taking the supremum, we conclude $\epsLSE^2\gtrsim d^2/(2^{k-1}c^{\ast})^2$, verifying (i) in this case. 
        
        On the other hand, suppose $\sigma\gtrsim d/(2^{k-1}c^{\ast})$. Recall that $\epsLSE(\sigma)\gtrsim \varepsilon^{\ast}(\sigma)\gtrsim\sigma\wedge d$ by Lemma \ref{minimax:bound:versus:sigma:and:d}, where $\varepsilon^{\ast}(\sigma)$ is the minimax rate. But $\sigma\wedge d \gtrsim d/(2^{k-1}c^{\ast})$. Hence $\epsLSE(\sigma)\gtrsim d/(2^{k-1}c^{\ast})$. This proves (i).
        
        We next prove (ii). Suppose the algorithm does not terminate within $k$ iterations. By (ii) of Lemma \ref{lemma:local:packing:algorithm}, we have $\epsSupwidth(\sigma)\le c'\cdot \frac{d}{2^k}$. Consider two scenarios.
        
        Consider the scenario (a) in (ii), where  $\sigma \ge c'\cdot \frac{d}{2^k}$.  Then for any $\mu\in K$, \[\epswidth(\sigma) \le c'\cdot \frac{d}{2^k} \le \sigma,\] which in turn implies by Lemma \ref{lemma:chatterjee:analogue} that for all $\mu\in K$ we have $\EE\|\hat{\mu}-\mu\|_2^2\lesssim {\sigma}^2.$ Hence $\epsLSE(\sigma)\lesssim \sigma$, and we always have $\epsLSE(\sigma)\lesssim d$. On the other hand, by Lemma \ref{minimax:bound:versus:sigma:and:d} we know $\epsLSE(\sigma)\gtrsim \varepsilon^{\ast}(\sigma)\gtrsim \sigma\wedge d$. Thus $\epsLSE(\sigma)\asymp \sigma\wedge d$, proving (a) in (ii).

        For scenario (b) in (ii), suppose  $\sigma \le c'\cdot \frac{d}{2^k}$. We claim  $\epsLSE(\sigma) \lesssim c'\cdot \frac{d}{2^k}$. Suppose not. Then for any absolute constant $C'>0$, we have $\epsLSE(\sigma)> C' \cdot c'\cdot \frac{d}{2^k}$, which implies  $\epsLSE(\sigma)\ge C'\sigma$.

        Take the $\mu$ that maximizes $\EE\|\hat\mu-\mu\|_2^2 $. We claim for this $\mu\in K$, $\epswidth(\sigma)\gtrsim \sigma$. If not, we would have $\epswidth(\sigma)\lesssim \sigma$. Then by Lemma \ref{lemma:chatterjee:analogue}, \[\epsLSE(\sigma)^2=\EE\|\hat\mu-\mu\|_2^2\lesssim \sigma^2.\] But this contradicts $\epsLSE(\sigma)\ge C'\sigma$ for a sufficiently large choice of $C'$. Hence $\epswidth(\sigma)\gtrsim \sigma$ as we claimed. Then by Lemma \ref{lemma:chatterjee:analogue} and (ii) of Lemma \ref{lemma:local:packing:algorithm}, \begin{align*}
            \epsLSE(\sigma)^2 &=\EE\|\hat\mu-\mu\|_2^2\asymp \epswidth(\sigma)^2 \le \epsSupwidth(\sigma)^2 \le \left(\frac{c'd}{2^k}\right)^2.
        \end{align*} But this contradicts our assumption from before  that $\epsLSE(\sigma)\ge C' \cdot c'\cdot \frac{d}{2^k}$ for any $C'>0$. Thus,  $\epsLSE(\sigma) \lesssim c'\cdot \frac{d}{2^k}$, proving (b) in (ii).
        \end{proof}

Proceeding to the global packing algorithm, we begin with two lemmas before proving our main result (Theorem \ref{theorem:global:packing:algorithm}).

\begin{lemma} \label{lemma:technical:global:algorithm} Let $\varepsilon$ be the output of $k$ steps of the \textbf{while} loop of Algorithm \ref{algo_global_packing}, including $k=0$ (initialization). Let $c^{\ast}>1$ be given and set $c' = \frac{({c^{\ast}}^2-1)}{{c^{\ast}}^2(10+8/c^{\ast})}$. \renewcommand\labelenumi{(\theenumi)}
\renewcommand{\theenumi}{\roman{enumi}} \begin{enumerate}
    \item  If $\varepsilon$ satisfies $\Psi\ge T$, then $\varepsilon/c^{\ast} \le \epswidth[\nu_{i^*}](\sigma) \le \epsSupwidth(\sigma)$.
    \item If $\varepsilon$ satisfies $\Psi < T$, then  $c'\cdot \epsSupwidth(\sigma) \le \varepsilon$.
\end{enumerate} 
\end{lemma}
    \begin{proof}[\hypertarget{proof:lemma:technical:global:algorithm}{Proof of Lemma \ref{lemma:technical:global:algorithm}}]
        We first prove (i). Suppose our update (or initial choice) of $\varepsilon$ satisfies $\Psi \geq T$. Following the argument leading up to \eqref{difference:of:local:widths:compared:to:eps:squared:th:eq1}, $T\geq \frac{C\varepsilon^2}{4\sigma}$ since $\varepsilon\ge 2\underline\varepsilon^{\ast}$. Given the maximal packing $\{\nu_1,\dots,\nu_M\}$ at that step, for some $i^*\in[M]$ we have
\begin{align*}
    w_{\nu_{i^*}}(\tfrac{\varepsilon}{c^{\ast}} + 2\varepsilon) -  w_{\nu_{i^*}}(\tfrac{\varepsilon}{c^{\ast}}) &\geq \sup_{\nu' \in  B(\nu_{i^*}, 2\varepsilon-\delta)\cap K} w_{\nu'}(\tfrac{\varepsilon}{c^{\ast}}) - w_{\nu_{i^*}}(\tfrac{\varepsilon}{c^{\ast}}) \\ &= \Psi \geq T =  \frac{C\varepsilon^2}{4\sigma},
\end{align*} where the first inequality follows since 
\begin{align*}
    \bigcup_{\mathclap{\nu' \in  B(\nu_{i^*}, 2\varepsilon-\delta)\cap K}} B(\nu',\tfrac{\varepsilon}{c^{\ast}}) \cap K  &\subseteq B(\nu_{i^*}, \tfrac{\varepsilon}{c^{\ast}} + 2\varepsilon-\delta) \cap K \\ &\subseteq B(\nu_{i^*},\tfrac{\varepsilon}{c^{\ast}} + 2\varepsilon) \cap K. 
\end{align*}
 Rearranging and using $C=8+8/c^{\ast}$, we obtain \[w_{\nu_{i^*}}(\varepsilon/c^{\ast} + 2\varepsilon) - \frac{(\varepsilon/c^{\ast}+2\varepsilon)^2}{2\sigma} \ge w_{\nu_{i^*}}(\varepsilon/c^{\ast} ) - \frac{(\varepsilon/c^{\ast})^2}{2\sigma}\] which by concavity of $\varepsilon\mapsto w_{\nu_{i^*}}(\varepsilon) - \varepsilon^2/2\sigma$ implies that $\epsSupwidth(\sigma)\ge \epswidth[\nu_{i^*}](\sigma) \ge \varepsilon/c^{\ast}$. This proves (i).

 Next, we prove (ii). Suppose $\Psi < T$.  We will first show that \begin{equation}\label{global:packing:algorithm:Psi:ineq} \Psi \geq \sup_{\substack{\nu_1,\nu_2\in K:\\ \|\nu_1-\nu_2\|\le 2\varepsilon}} w_{\nu_1}(\varepsilon/c^{\ast}) - w_{\nu_2}(\varepsilon/c^{\ast}) - \frac{\varepsilon^2}{\sigma}.\end{equation} Let $\nu_1^*,\nu_2^*$ attain this supremum. The $\nu_2^*$ is at most distance $\delta$ away from one of the points $\nu_i$ in our maximal $\delta$-packing set. Using \eqref{remark:Lipschitz:result} from Remark \ref{remark:Lipschitz} and the definition of $\delta$, for some $i$,
\begin{align*}
    |w_{\nu_i}(\varepsilon/c^{\ast}) - w_{\nu_2^*}(\varepsilon/c^{\ast})| &\leq \frac{\delta c^{\ast}}{\varepsilon}\cdot \sup_{\eta \in K} w_\eta(\varepsilon/c^{\ast}) \\ &\leq \frac{\varepsilon^2}{4\sigma}\leq \frac{\varepsilon^2}{2\sigma}.
\end{align*}

Next, since $\|\nu_2^* - \nu_i\| \leq \delta$ and $\|\nu_1^* - \nu_2^*\| \leq 2\varepsilon$, we have that $\|\nu_1^* - \nu_i\| \leq 2\varepsilon + \delta$. Then setting $\alpha = \frac{2\delta}{2\varepsilon+\delta}\in (0,1)$ and taking $\zeta = \alpha \nu_i +  (1-\alpha) \nu_1^*$, we have $\zeta \in B(\nu_i, 2\varepsilon-\delta)\cap B(\nu_1^*, 2\delta)\cap K$. Again by \eqref{remark:Lipschitz:result}, 
\begin{align*}
    |w_{\zeta}(\varepsilon/c^{\ast}) - w_{\nu_1^*}(\varepsilon/c^{\ast})| \leq \frac{2\delta c^{\ast}}{\varepsilon}\cdot \sup_{\eta \in K} w_\eta(\varepsilon/c^{\ast}) \leq \frac{\varepsilon^2}{2\sigma}. 
\end{align*} Both results imply $w_{\zeta}(\varepsilon/c^{\ast}) - w_{\nu_1^*}(\varepsilon/c^{\ast})\ge -\frac{\varepsilon^2}{2\sigma}$ and  $w_{\nu_2^*}(\varepsilon/c^{\ast}) - w_{\nu_i}(\varepsilon/c^{\ast})\ge -\frac{\varepsilon^2}{2\sigma}$.

By the definition of $\Psi$ as a max and supremum followed by the preceding two lower bounds  of $-\varepsilon^2/2\sigma$, we have
\begin{align*}
    \Psi &\geq w_{\zeta}(\varepsilon/c^{\ast}) - w_{\nu_i}(\varepsilon/c^{\ast}) \\ &\geq w_{\nu^*_1}(\varepsilon/c^{\ast}) - w_{\nu^*_2}(\varepsilon/c^{\ast}) - \frac{\varepsilon^2}{\sigma}.
\end{align*} Hence $\Psi$ satisfies \eqref{global:packing:algorithm:Psi:ineq} as claimed.

Next, since $\Psi <T$, \eqref{global:packing:algorithm:Psi:ineq} implies \begin{align*}
   \MoveEqLeft \sup_{\substack{\nu_1,\nu_2\in K:\\ \|\nu_1-\nu_2\|\le 2\varepsilon}} w_{\nu_1}(\varepsilon/c^{\ast}) - w_{\nu_2}(\varepsilon/c^{\ast}) - \frac{\varepsilon^2}{\sigma} \\ &< \frac{C \varepsilon^2}{2\sigma} - \frac{L\varepsilon}{c^{\ast}} \sqrt{\log \cMloc(\varepsilon)}.
\end{align*} Pick any $\mu\in K$. Then we have  \begin{align*}
   \MoveEqLeft \sup_{\nu_1\in B(\mu,\varepsilon)\cap K}w_{\nu_1}(\varepsilon/c^{\ast}) - \inf_{\nu_2\in B(\mu,\varepsilon)\cap K}w_{\nu_2}(\varepsilon/c^{\ast}) \\ &\le \sup_{\substack{\nu_1,\nu_2\in K:\\ \|\nu_1-\nu_2\|\le 2\varepsilon}} w_{\nu_1}(\varepsilon/c^{\ast}) - w_{\nu_2}(\varepsilon/c^{\ast}).
\end{align*} Combining the previous two inequalities, \begin{align*}
    \MoveEqLeft\sup_{\nu_1\in B(\mu,\varepsilon)\cap K}w_{\nu_1}(\varepsilon/c^{\ast}) - \inf_{\nu_2\in B(\mu,\varepsilon)\cap K}w_{\nu_2}(\varepsilon/c^{\ast}) \\ &< \frac{(C+2)\varepsilon^2}{2\sigma} -  \frac{L\varepsilon}{c^{\ast}} \sqrt{\log \cMloc(\varepsilon)}.
\end{align*}
By \citet[Exercise 2.4.11]{talagrand2014upper},  \[ \sup_{\nu_1 \in B(\mu,\varepsilon) \cap K}  w_{\nu_1}(\varepsilon/c^{\ast}) +  \frac{L\varepsilon}{c^{\ast}} \sqrt{\log \cMloc(\varepsilon)} \geq w_{\mu}(\varepsilon).\] On the other hand \[\inf_{\nu_2 \in  B(\mu,\varepsilon) \cap K}  w_{\nu_2}(\varepsilon/c^{\ast})\leq w_{\mu}(\varepsilon/c^{\ast}),\] and combining the previous three bounds, we have \begin{align*}
    w_{\mu}(\varepsilon)- w_{\mu}(\varepsilon/c^{\ast})<  \frac{(C+2)\varepsilon^2}{2\sigma}.
\end{align*} Taking $\sigma' = \frac{({c^{\ast}}^2-1)}{{c^{\ast}}^2(C+2)}\cdot \sigma$, we have 
\begin{align*}
    w_{\mu}(\varepsilon)- \frac{\varepsilon^2}{2\sigma'} < w_{\mu}(\varepsilon/c^{\ast}) - \frac{(\varepsilon/c^{\ast})^2}{2\sigma'}.
\end{align*} By concavity again, $\epswidth(\sigma') \le \varepsilon$, and since this holds for all $\mu\in K$, we have $\epsSupwidth(\sigma') \le\varepsilon$.

We now observe that $\sigma' = c\sigma$ where  $c=\frac{({c^{\ast}}^2-1)}{{c^{\ast}}^2(C+2)}\in(0,1)$. Therefore, using Lemma \ref{lemma:epsilon_mu:nondecreasing}, \begin{align*}
    \epsSupwidth(\sigma) &\le c^{-1}\epsSupwidth(c\sigma)=c^{-1}\epsSupwidth(\sigma') \\ &\le c^{-1}\varepsilon = \frac{{c^{\ast}}^2(10+8/c^{\ast})}{({c^{\ast}}^2-1)}\cdot \varepsilon.
\end{align*} Rearranging and recalling our definition of $c'$ for this lemma, we have $c'\cdot \epsSupwidth(\sigma)\le \varepsilon$ and this holds for any $\varepsilon$ satisfying $\Psi< T$. 
    \end{proof}

\begin{lemma} \label{lemma:global:algorithm}  Let $c^{\ast}>1$ be given and set $c' = \frac{({c^{\ast}}^2-1)}{{c^{\ast}}^2(10+8/c^{\ast})}$. Then Algorithm \ref{algo_global_packing} satisfies the following: 
\renewcommand\labelenumi{(\theenumi)}
\renewcommand{\theenumi}{\roman{enumi}}  \begin{enumerate}
    \item Algorithm \ref{algo_global_packing} will terminate in finitely many steps. 
    \item If Algorithm \ref{algo_global_packing} does not immediately terminate, its output $\varepsilon$ satisfies $c'\cdot \epsSupwidth(\sigma)\le \varepsilon\le  2c^{\ast}\cdot\epsSupwidth(\sigma).$
    \item If  Algorithm \ref{algo_global_packing} terminates on initialization, i.e., returns $\varepsilon=2\underline\varepsilon^{\ast}$, then we have $c'\cdot \epsSupwidth(\sigma)\le \varepsilon \lesssim \epsLSE(\sigma).$
\end{enumerate} 
\end{lemma}
    \begin{proof}[\hypertarget{proof:lemma:global:algorithm}{Proof of Lemma \ref{lemma:global:algorithm}}]
   If $\Psi \geq T$ at some step of the algorithm (including at initialization), then (i) of Lemma \ref{lemma:technical:global:algorithm} states that  $ \varepsilon/c^{\ast} \le \epswidth[\nu_{i^*}](\sigma) \le \epsSupwidth(\sigma)$. On the other hand, if $\Psi < T$, then (ii) of Lemma \ref{lemma:technical:global:algorithm} states that $c'\cdot\epsSupwidth(\sigma) \le\varepsilon$.

    The first claim implies the algorithm must terminate (i.e., eventually $\Psi < T$), for suppose not. Then after sufficiently many iterations (each of which doubles $\varepsilon$), we have $\varepsilon > d c^{\ast}$. This implies $\epsSupwidth(\sigma)> d$, contradicting that $\epsSupwidth(\sigma)\le d$ from Lemma \ref{lemma:overline_epsilon_K_diameter}. Hence $\Psi< T$ after finitely many steps, proving (i).

We now prove (ii). Suppose the algorithm terminates after strictly more than one step. Then the procedure doubles $\varepsilon$ while it still satisfies a condition ($\Psi\ge T$) implying $\varepsilon/c^{\ast} \le \epsSupwidth(\sigma)$, until the final update, after which $\Psi <T$ and $\varepsilon\ge c'\cdot \epsSupwidth(\sigma)$. The final $\varepsilon$ therefore satisfies \[c'\cdot \epsSupwidth(\sigma)\le \varepsilon\le  2c^{\ast}\epsSupwidth(\sigma).\]

We finally prove (iii). Suppose the algorithm terminates immediately, i.e., at the initialization step, $\Psi < T$. Then we have \[c'\cdot \epsSupwidth(\sigma) \le \varepsilon = 2\underline\varepsilon^{\ast}\asymp \varepsilon^{\ast}\lesssim\epsLSE(\sigma).\] The first inequality is just (ii) of Lemma \ref{lemma:technical:global:algorithm}, and the last two inequalities (up to constants) follow from Lemma \ref{lemma:equivalent:information:lower:bound} and the fact that the minimax rate lower bounds  the LSE rate.
\end{proof}

    \begin{proof}[\hypertarget{proof:theorem:global:packing:algorithm}{Proof of Theorem \ref{theorem:global:packing:algorithm}}]  Assume the algorithm does not immediately terminate. We first prove (i). Suppose $d\lesssim \sigma$. Then $d\gtrsim\varepsilon^{\ast}\gtrsim \sigma\wedge d \asymp d$ by Lemma \ref{minimax:bound:versus:sigma:and:d}. Thus the minimax rate satisfies $\varepsilon^{\ast}\asymp d$, which means  $\epsLSE\asymp d$ (as does any estimator). Moreover, since we initialize $\varepsilon$ to be the minimax rate up to constants and then increase it, we have $\varepsilon\gtrsim \varepsilon^{\ast}\asymp d$. This proves (i).

    Proceeding to (ii) and (iii), suppose $d\gtrsim \sigma$. Then $\varepsilon^{\ast}\gtrsim \sigma\wedge d \asymp \sigma$. But since $\varepsilon \gtrsim \varepsilon^{\ast}$ by construction, we have $\varepsilon \gtrsim \sigma$. This means $\epsSupwidth(\sigma)\gtrsim \varepsilon \gtrsim \sigma$ by (ii) of Lemma \ref{lemma:global:algorithm}. Then for some $\mu\in K$, $\epswidth(\sigma)\gtrsim \varepsilon\gtrsim\sigma$. Lemma \ref{lemma:chatterjee:analogue} implies $\EE\|\hat\mu-\mu\|_2^2 \asymp \epswidth^2 \gtrsim \varepsilon^2.$ Thus, $\epsLSE(\sigma)\gtrsim \varepsilon$.

We then consider two subcases regarding $\sigma$. First suppose  $\sigma \ge\varepsilon$, i.e., scenario (ii). Then \[c'\cdot \epsSupwidth(\sigma)\le \varepsilon\le \sigma\] by (ii) of Lemma \ref{lemma:global:algorithm}, where $c' = \frac{({c^{\ast}}^2-1)}{{c^{\ast}}^2(10+8/c^{\ast})}$.  Then for any $\mu\in K$, $c'\epswidth(\sigma) \le \varepsilon \le \sigma$, which in turn implies by Lemma \ref{lemma:chatterjee:analogue} that for all $\mu\in K$ we have $\EE\|\hat{\mu}-\mu\|_2^2\lesssim {\sigma}^2.$ Hence $\epsLSE(\sigma)\lesssim \sigma$. But since $\epsLSE(\sigma)\lesssim d$ always holds, and we know from Lemma \ref{minimax:bound:versus:sigma:and:d} that $\epsLSE(\sigma)\gtrsim \varepsilon^{\ast}\gtrsim \sigma\wedge d$, we have $\epsLSE(\sigma)\asymp \sigma\wedge d$. But we also have $\varepsilon\gtrsim\varepsilon^{\ast}\gtrsim \sigma\wedge d\asymp \epsLSE(\sigma)$, again using Lemma \ref{minimax:bound:versus:sigma:and:d} and our argument from scenario (i) that $\varepsilon\gtrsim \varepsilon^{\ast}$. Thus, $\epsLSE(\sigma)\gtrsim\varepsilon\gtrsim \epsLSE(\sigma)\asymp \sigma\wedge d$, proving (ii).

        Next, suppose $\sigma \le \varepsilon$, i.e., scenario (iii). We claim  $\epsLSE(\sigma) \lesssim \varepsilon$. Suppose to the contrary that $\epsLSE(\sigma)\ge C' \cdot \varepsilon$ for any absolute constant $C'>0$, which implies  $\epsLSE(\sigma)\ge C'\sigma$ for all such $C'$.

        Take the $\mu\in K$ that maximizes $\EE\|\hat\mu-\mu\|_2^2 $. We claim for this $\mu\in K$, $\epswidth(\sigma)\gtrsim \sigma$. Suppose to the contrary $\epswidth(\sigma)\lesssim \sigma$. Then by Lemma \ref{lemma:chatterjee:analogue}, $\epsLSE(\sigma)^2=\EE\|\hat\mu-\mu\|_2^2\lesssim \sigma^2.$ But this will contradict $\epsLSE(\sigma)\ge C'\sigma$ (so long as $C'$ is picked sufficiently large). Hence $\epswidth(\sigma)\gtrsim \sigma$ as we claimed. Then by Lemma \ref{lemma:chatterjee:analogue} and Lemma \ref{lemma:global:algorithm}, \[\epsLSE(\sigma)^2=\EE\|\hat\mu-\mu\|_2^2\asymp \epswidth(\sigma)^2 \le  \epsSupwidth(\sigma)^2 \lesssim \varepsilon^2.\] But this contradicts our assumption from before  that $\epsLSE(\sigma)\ge C' \cdot \varepsilon$ for any $C'$. Thus,  $\epsLSE(\sigma) \lesssim \varepsilon$. Then recall that for both (ii) and (iii), we showed $\varepsilon\lesssim \epsLSE(\sigma)$. Hence we have $\varepsilon\asymp \epsLSE(\sigma)$, proving (iii).

        For our final claim, we suppose Algorithm \ref{algo_global_packing} terminates upon initialization. Recall  that \begin{equation} \label{eq:global:algo:theorem:1}
            c'\cdot \epsSupwidth(\sigma)\le \varepsilon \asymp \varepsilon^{\ast}\lesssim \epsLSE(\sigma)
        \end{equation} from  (iii) of Lemma \ref{lemma:global:algorithm}. Note that $\varepsilon$ is just the minimax rate. Suppose $\epsLSE(\sigma)\gtrsim \sigma$ for a sufficiently large constant. Now take $\mu$ that maximizes $\EE\|\hat\mu-\mu\|_2^2$. Well $\EE\|\hat\mu-\mu\|_2^2\gtrsim \sigma^2$. We claim $\epswidth(\sigma)^2\asymp \EE\|\hat\mu-\mu\|_2^2$. If $\epswidth(\sigma)\gtrsim \sigma$, this immediately follows from Lemma \ref{lemma:chatterjee:analogue}. If $\epswidth(\sigma)\lesssim \sigma$, then by Lemma \ref{lemma:chatterjee:analogue}  there exists a universal constant $C'>0$ such that $\EE\|\hat\mu-\mu\|_2^2\le C'\sigma^2$. But we assumed  $\epsLSE(\sigma)\gtrsim \sigma$ for a sufficiently large constant, i.e., larger than $C'$, leading to a contradiction. So indeed $\epswidth(\sigma)\gtrsim \sigma$, and using \eqref{eq:global:algo:theorem:1},
        \begin{align*}
            \epsLSE(\sigma)^2 &= \EE\|\hat\mu-\mu\|_2^2 \asymp\epswidth(\sigma)^2 \\ &\le \epsSupwidth(\sigma)^2\lesssim \varepsilon^2 \lesssim \epsLSE(\sigma)^2. 
        \end{align*}This proves $ \varepsilon\asymp \epsLSE(\sigma)$.

        Suppose instead $\epsLSE(\sigma)\lesssim \sigma$. We always have $\epsLSE(\sigma)\lesssim  d$ and  $\epsLSE(\sigma)\gtrsim \varepsilon^{\ast}\gtrsim \sigma\wedge d$ from Lemma \ref{minimax:bound:versus:sigma:and:d}. So $\epsLSE(\sigma)\asymp \sigma\wedge d$. But we also have $\varepsilon\asymp\varepsilon^{\ast}\gtrsim \sigma\wedge d$ and $\varepsilon\asymp\varepsilon^{\ast}\lesssim \epsLSE(\sigma)\asymp \sigma\wedge d$ since the minimax rate is a lower bound on the LSE, proving that $\varepsilon\asymp \sigma\wedge d \asymp \epsLSE(\sigma)$ as claimed.
    \end{proof}

\end{document}